\documentclass[11pt]{preprint}

\usepackage{difftrees} %Added%
\usepackage[full]{textcomp}
\usepackage[osf]{newtxtext} 
\usepackage[cal=boondoxo]{mathalfa}
\usepackage{colortbl}

\usepackage{tikz-cd} %Added%
\usetikzlibrary{cd} %Added%
\usepackage[all,cmtip]{xy}
\usepackage{comment}

\usepackage{amssymb}
\usepackage{mathtools}
\usepackage{hyperref}
\usepackage{breakurl}
\usepackage{mhenvs}
\usepackage{mhequ} 
\usepackage{mhsymb}
\usepackage{booktabs}
\usepackage{tikz}
\usepackage{tcolorbox}
\usepackage{mathrsfs}
\usepackage[utf8]{inputenc}
\usepackage{longtable}
\usepackage{wrapfig}
\usepackage{rotating} %Added%
\usepackage{subcaption}
\usepackage{mathrsfs}
\usepackage{epsfig}
\usepackage{microtype}
\usepackage{comment}
\usepackage{wasysym}
\usepackage{centernot}
\usepackage{enumitem}
\usepackage{bm}
\usepackage{stackrel}
\usepackage{graphicx}
\usepackage{axodraw}
\usepackage{xspace}
\usepackage{subcaption} %Added%
\usepackage{epsfig} %Added%
\usepackage{axodraw} %Added%
\usepackage{xspace} %Added%
\usepackage[toc,page]{appendix} %Added%
\usepackage[all,cmtip]{xy} %Added%
\usepackage{relsize} %Added%
\usepackage{shuffle} %Added%
\makeatletter
\newcommand{\globalcolor}[1]{%
  \color{#1}\global\let\default@color\current@color
}
\makeatother

\usepackage{forest}
\forestset{
	decor/.style = {},  % 关键修改：清空decor样式
	root/.style = {minimum size = 0.1ex},
	decorated/.style = {
		for tree = {
			circle, fill, inner sep = 0.3ex, minimum size = 0.4ex,
			edge={
				thick,
				shorten <=2pt,
				shorten >=2pt
			},
			grow' = north, l = 0, l sep = 1.0ex,
			s sep = 0.7em, fit = tight, parent anchor = center,
			child anchor = center,
			delay = {decor/.option = content, content =}
		}
	},
	default preamble = {decorated, root},
}

\usetikzlibrary{calc}
\usetikzlibrary{decorations}
\usetikzlibrary{positioning}
\usetikzlibrary{shapes}
\usetikzlibrary{external}

\definecolor{blush}{rgb}{0.87, 0.36, 0.51}
	\definecolor{brightcerulean}{rgb}{0.11, 0.67, 0.84}
	\definecolor{greenryb}{rgb}{0.4, 0.69, 0.2}

\newif\ifdark
\darkfalse

\ifdark
\definecolor{darkred}{rgb}{0.9,0.2,0.2}
\definecolor{darkblue}{rgb}{0.7,0.3,1}
\definecolor{darkgreen}{rgb}{0.1,0.9,0.1}
\definecolor{franck}{rgb}{0,0.8,1}
\definecolor{pagebackground}{rgb}{.15,.21,.18}
\definecolor{pageforeground}{rgb}{.84,.84,.85}
\pagecolor{pagebackground}
\AtBeginDocument{\globalcolor{pageforeground}}
\definecolor{symbols}{rgb}{0,0.7,1}
\colorlet{connection}{red!80!black}
\colorlet{boxcolor}{blue!50}

\else

\definecolor{darkred}{rgb}{0.7,0.1,0.1}
\definecolor{darkblue}{rgb}{0.4,0.1,0.8}
\definecolor{darkgreen}{rgb}{0.1,0.7,0.1}
\definecolor{franck}{rgb}{0,0,1}
\definecolor{pagebackground}{rgb}{1,1,1}
\definecolor{pageforeground}{rgb}{0,0,0}
\colorlet{symbols}{blue!90!black}
\colorlet{connection}{red!30!black}
\colorlet{boxcolor}{blue!50!black}

\fi

\def\slash{\leavevmode\unskip\kern0.18em/\penalty\exhyphenpenalty\kern0.18em}
\def\dash{\leavevmode\unskip\kern0.18em--\penalty\exhyphenpenalty\kern0.18em}

\DeclareMathAlphabet{\mathbbm}{U}{bbm}{m}{n}

\DeclareFontFamily{U}{BOONDOX-calo}{\skewchar\font=45 }
\DeclareFontShape{U}{BOONDOX-calo}{m}{n}{
  <-> s*[1.05] BOONDOX-r-calo}{}
\DeclareFontShape{U}{BOONDOX-calo}{b}{n}{
  <-> s*[1.05] BOONDOX-b-calo}{}
\DeclareMathAlphabet{\mcb}{U}{BOONDOX-calo}{m}{n}
\SetMathAlphabet{\mcb}{bold}{U}{BOONDOX-calo}{b}{n}
\setlist{noitemsep,topsep=4pt,leftmargin=1.5em}

\DeclareMathAlphabet{\mathbbm}{U}{bbm}{m}{n}

\DeclareMathAlphabet{\mcb}{U}{BOONDOX-calo}{m}{n}
\SetMathAlphabet{\mcb}{bold}{U}{BOONDOX-calo}{b}{n}
\DeclareFontFamily{U}{mathx}{\hyphenchar\font45}
\DeclareFontShape{U}{mathx}{m}{n}{
      <5> <6> <7> <8> <9> <10>
      <10.95> <12> <14.4> <17.28> <20.74> <24.88>
      mathx10
      }{}
\DeclareSymbolFont{mathx}{U}{mathx}{m}{n}
\DeclareMathSymbol{\bigtimes}{1}{mathx}{"91}

\providecommand{\figures}{false}
{ \ifthenelse{\equal{\figures}{false}} {#1}{\[ {\rm Figure \ missing !} \]} }{}
\def\id{\mathrm{id}}

\def\CH{\mathcal{H}}

\def\CC{\mathcal{C}}

\def\CB{\mathcal{B}}
\def\CM{\mathcal{M}}
\def\CT{\mathcal{T}}
\def\Cf{\mathcal{f}}

\def\ST{\mathscr{T}}

\tikzstyle{tinydots}=[dash pattern=on \pgflinewidth off \pgflinewidth]
\tikzstyle{superdense}=[dash pattern=on 4pt off 1pt]

\newcommand{\beq}{\begin{equation}}
\newcommand{\eeq}{\end{equation}}

\usepackage{empheq}

\def\${|\!|\!|}

\def\bstar{\mathbin{\bar{\star}}}

\newcounter{theorem}

\newtheorem{ex}[theorem]{Example}

\newenvironment{DIFnomarkup}{}{} % see man latexdiff

\def\BCK{\textnormal{\tiny \textsc{BCK}}}

\newtheorem{assumption}{Assumption}
 
\theorembodyfont{\rmfamily}

\newcommand{\rrightarrow}{{\to\hskip -4.9mm\raise 1pt\hbox{$\to$}}}

\newfont{\indic}{bbmss12}

\def\Nabla_#1{\nabla_{\!#1}}

\makeatletter
\pgfdeclareshape{crosscircle}
{
  \inheritsavedanchors[from=circle] % this is nearly a circle
  \inheritanchorborder[from=circle]
  \inheritanchor[from=circle]{north}
  \inheritanchor[from=circle]{north west}
  \inheritanchor[from=circle]{north east}
  \inheritanchor[from=circle]{center}
  \inheritanchor[from=circle]{west}
  \inheritanchor[from=circle]{east}
  \inheritanchor[from=circle]{mid}
  \inheritanchor[from=circle]{mid west}
  \inheritanchor[from=circle]{mid east}
  \inheritanchor[from=circle]{base}
  \inheritanchor[from=circle]{base west}
  \inheritanchor[from=circle]{base east}
  \inheritanchor[from=circle]{south}
  \inheritanchor[from=circle]{south west}
  \inheritanchor[from=circle]{south east}
  \inheritbackgroundpath[from=circle]
  \foregroundpath{
    \centerpoint%
    \pgf@xc=\pgf@x%
    \pgf@yc=\pgf@y%
    \pgfutil@tempdima=\radius%
    \pgfmathsetlength{\pgf@xb}{\pgfkeysvalueof{/pgf/outer xsep}}%  
    \pgfmathsetlength{\pgf@yb}{\pgfkeysvalueof{/pgf/outer ysep}}%  
    \ifdim\pgf@xb<\pgf@yb%
      \advance\pgfutil@tempdima by-\pgf@yb%
    \else%
      \advance\pgfutil@tempdima by-\pgf@xb%
    \fi%
    \pgfpathmoveto{\pgfpointadd{\pgfqpoint{\pgf@xc}{\pgf@yc}}{\pgfqpoint{-0.707107\pgfutil@tempdima}{-0.707107\pgfutil@tempdima}}}
    \pgfpathlineto{\pgfpointadd{\pgfqpoint{\pgf@xc}{\pgf@yc}}{\pgfqpoint{0.707107\pgfutil@tempdima}{0.707107\pgfutil@tempdima}}}
    \pgfpathmoveto{\pgfpointadd{\pgfqpoint{\pgf@xc}{\pgf@yc}}{\pgfqpoint{-0.707107\pgfutil@tempdima}{0.707107\pgfutil@tempdima}}}
    \pgfpathlineto{\pgfpointadd{\pgfqpoint{\pgf@xc}{\pgf@yc}}{\pgfqpoint{0.707107\pgfutil@tempdima}{-0.707107\pgfutil@tempdima}}}
  }
}
\makeatother

\def\decorate#1#2{
        \ifnum#2>0
    		\foreach \count in {1,...,#2}{
	       	let
				\p1 = (sourcenode.center),
                \p2 = (sourcenode.east),
				\n1 = {\x2-\x1},
				\n2 = {1mm},
				\n3 = {(1.3+0.6*(\count-1))*\n1},
				\n4 = {0.7*\n1}
			in 
        		node[rectangle,fill=symbols,rotate=30,inner sep=0pt,minimum width=0.2*\n2,minimum height=\n2] at ($(sourcenode.center) + (\n3,\n4)$) {}
				}
		\fi
        \ifnum#1>0
    		\foreach \count in {1,...,#1}{
	       	let
				\p1 = (sourcenode.center),
                \p2 = (sourcenode.east),
				\n1 = {\x2-\x1},
				\n2 = {1mm},
				\n3 = {(1.3+0.6*(\count-1))*\n1},
				\n4 = {0.7*\n1}
			in 
        		node[rectangle,fill=symbols,rotate=-30,inner sep=0pt,minimum width=0.2*\n2,minimum height=\n2] at ($(sourcenode.center) + (-\n3,\n4)$) {}
				}
		\fi
}

\tikzset{
    dectriangle/.style 2 args={
        triangle,
        alias=sourcenode,
        append after command={\decorate{#1}{#2}}
    },
    dectriangle/.default={0}{0},
}

\tikzset{
	cross/.style={path picture={ 
  		\draw[symbols]
			(path picture bounding box.south east) -- (path picture bounding box.north west) (path picture bounding box.south west) -- (path picture bounding box.north east);
		}},
root/.style={circle,fill=green!50!black,inner sep=0pt, minimum size=1.2mm},
        dot/.style={circle,fill=pageforeground,inner sep=0pt, minimum size=1mm},
        dotred/.style={circle,fill=pageforeground!50!pagebackground,inner sep=0pt, minimum size=2mm},
        var/.style={circle,fill=pageforeground!10!pagebackground,draw=pageforeground,inner sep=0pt, minimum size=3mm},
        var2/.style={circle,fill=darkgreen,draw=pageforeground,inner sep=0pt, minimum size=3mm},
        var3/.style={circle,fill=brown,draw=pageforeground,inner sep=0pt, minimum size=3mm},
        var4/.style={circle,fill=purple,draw=pageforeground,inner sep=0pt, minimum size=3mm},
        var5/.style={circle,fill=blue,draw=pageforeground,inner sep=0pt, minimum size=3mm},
        var6/.style={circle,fill=darkblue,draw=pageforeground,inner sep=0pt, minimum size=3mm},
        kernel/.style={semithick,draw=green,shorten >=2pt,shorten <=2pt},
        kernels/.style={snake=zigzag,shorten >=2pt,shorten <=2pt,segment amplitude=1pt,segment length=4pt,line before snake=2pt,line after snake=5pt,},
        rho/.style={densely dashed,semithick,shorten >=2pt,shorten <=2pt},
           testfcn/.style={dotted,semithick,shorten >=2pt,shorten <=2pt},
        renorm/.style={shape=circle,fill=pagebackground,inner sep=1pt},
        labl/.style={shape=rectangle,fill=pagebackground,inner sep=1pt},
        xic/.style={very thin,circle,draw=symbols,fill=symbols,inner sep=0pt,minimum size=1.2mm},
        g/.style={very thin,rectangle,draw=symbols,fill=symbols!10!pagebackground,inner sep=0pt,minimum width=2.5mm,minimum height=1.2mm},
        xi/.style={very thin,circle,draw=symbols,fill=symbols!10!pagebackground,inner sep=0pt,minimum size=1.2mm},
	xies/.style={very thin,rectangle,fill=green!50!black!25,draw=symbols,inner sep=0pt,minimum size=1.1mm},
	xiesf/.style={very thin,rectangle,fill=green!50!black,draw=symbols,inner sep=0pt,minimum size=1.1mm},
        xix/.style={very thin,crosscircle,fill=symbols!10!pagebackground,draw=symbols,inner sep=0pt,minimum size=1.2mm},
        X/.style={very thin,cross,rectangle,fill=pagebackground,draw=symbols,inner sep=0pt,minimum size=1.2mm},
	xib/.style={thin,circle,fill=symbols!10!pagebackground,draw=symbols,inner sep=0pt,minimum size=1.6mm},
	xie/.style={thin,circle,fill=green!50!black,draw=symbols,inner sep=0pt,minimum size=1.6mm},
	xid/.style={thin,circle,fill=symbols,draw=symbols,inner sep=0pt,minimum size=1.6mm},
	xibx/.style={thin,crosscircle,fill=symbols!10!pagebackground,draw=symbols,inner sep=0pt,minimum size=1.6mm},
	kernels2/.style={very thick,draw=connection,segment length=12pt},
	keps/.style={thin,draw=symbols,->},
	kepspr/.style={thick,draw=connection,->},
	krho/.style={thin,draw=symbols,superdense,->},
	krhopr/.style={thick,draw=connection,superdense},
	triangle/.style = { regular polygon, regular polygon sides=3},
	not/.style={thin,circle,draw=connection,fill=connection,inner sep=0pt,minimum size=0.5mm},
	diff/.style = {very thin,draw=symbols,triangle,fill=red!50!black,inner sep=0pt,minimum size=1.6mm},
	diff1/.style = {very thin,dectriangle={1}{0},fill=red!50!black,draw=symbols,inner sep=0pt,minimum size=1.6mm},
	diff2/.style = {very thin,dectriangle={1}{1},fill=red!50!black,draw=symbols,inner sep=0pt,minimum size=1.6mm},
		diffmini/.style = {very thin,rectangle,fill=black,draw=black,inner sep=0pt,minimum size=0.75mm},
	 kernelsmod/.style={very thick,draw=connection,segment length=12pt},
	 rec/.style = {very thin,rectangle,fill=black,draw=black,inner sep=0pt,minimum size=2mm},
	cerc/.style={very thin,circle,draw=black,fill=symbols,inner sep=0pt,minimum size=2mm},
	stars/.style={very thin,star,star points=6,star point ratio=0.5, draw=black,fill=red,inner sep=0pt,minimum size=0.7mm},
	>=stealth,
        }
        \tikzset{
root/.style={circle,fill=black!50,inner sep=0pt, minimum size=3mm},
        circ/.style={circle,fill=white,draw=black,very thin,inner sep=.5pt, minimum size=1.2mm},
        round1/.style={fill=white,outer sep = 0,inner sep=2pt,rounded corners=1mm,draw,text=black,thin,minimum size=1.2mm},
          circ1/.style={circle,fill=red!10,draw=red,very thin,inner sep=.5pt, minimum size=1.2mm},
        rect/.style={fill=white,outer sep = 0,inner sep=2pt,rectangle,draw,text=black,thin,minimum size=1.2mm},
        rect1/.style={fill=white,outer sep = 0,inner sep=2pt,rectangle,draw,text=black,thin,minimum size=1.2mm},
        round2/.style={fill=red!10,outer sep = 0,inner sep=2pt,rounded corners=1mm,draw,text=black,thin,minimum size=1.2mm},
       round3/.style={fill=blue!10,outer sep = 0,inner sep=2pt,rounded corners=1mm,draw,text=black,thin,minimum size=1.2mm}, 
        rect2/.style={fill=black!10,outer sep = 0,inner sep=2pt,rectangle,draw,text=black,thin,minimum size=1.2mm},
        dot/.style={circle,fill=black,inner sep=0pt, minimum size=1.2mm},
        dotred/.style={circle,fill=black!50,inner sep=0pt, minimum size=2mm},
        var/.style={circle,fill=black!10,draw=black,inner sep=0pt, minimum size=3mm},
        kernel/.style={semithick,draw=darkgreen},
         diag/.style={thin,shorten >=4pt,shorten <=4pt},
        kernel1/.style={thick},
        kernels/.style={snake=zigzag,shorten >=2pt,shorten <=2pt,segment amplitude=1pt,segment length=4pt,line before snake=2pt,line after snake=5pt},
		kernels1/.style={snake=zigzag,segment amplitude=0.5pt,segment length=2pt},
		rho1/.style={densely dotted,semithick},
        rho/.style={densely dashed,semithick,shorten >=2pt,shorten <=2pt},
           testfcn/.style={dotted,semithick,shorten >=2pt,shorten <=2pt},
           visible/.style={draw, circle, fill, inner sep=0.25ex},
        renorm/.style={shape=circle,fill=white,inner sep=1pt},
        labl/.style={shape=rectangle,fill=white,inner sep=1pt},
        xic/.style={very thin,circle,fill=symbols,draw=black,inner sep=0pt,minimum size=1.2mm},
        xi/.style={very thin,circle,fill=blue!10,draw=black,inner sep=0pt,minimum size=1.2mm},
	xib/.style={very thin,circle,fill=blue!10,draw=black,inner sep=0pt,minimum size=1.6mm},
	xie/.style={very thin,circle,fill=green!50!black,draw=black,inner sep=0pt,minimum size=1mm},
	xid/.style={very thin,circle,fill=symbols,draw=black,inner sep=0pt,minimum size=1.6mm},
	edgetype/.style={very thin,circle,draw=black,inner sep=0pt,minimum size=5mm},
	nodetype/.style={very thick,circle,draw=black,inner sep=0pt,minimum size=5mm},
	kernels2/.style={very thick,draw=connection,segment length=12pt},
clean/.style={thin,circle,fill=black,inner sep=0pt,minimum size=1mm},	not/.style={thin,circle,fill=symbols,draw=connection,fill=connection,inner sep=0pt,minimum size=0.8mm},
	>=stealth,
        }

\makeatletter
\def\DeclareSymbol#1#2#3{%
	\expandafter\gdef\csname MH@symb@#1\endcsname{\tikzsetnextfilename{symbol#1}%
	\tikz[baseline=#2,scale=0.15,draw=symbols,line join=round]{#3}}%
	\expandafter\gdef\csname MH@symb@#1s\endcsname{\scalebox{0.75}{\tikzsetnextfilename{symbol#1}%
	\tikz[baseline=#2,scale=0.15,draw=symbols,line join=round]{#3}}}%
	\expandafter\gdef\csname MH@symb@#1ss\endcsname{\scalebox{0.65}{\tikzsetnextfilename{symbol#1}%
	\tikz[baseline=#2,scale=0.15,draw=symbols,line join=round]{#3}}}%
	}
\def\<#1>{\ifthenelse{\boolean{mmode}}{\mathchoice{\csname MH@symb@#1\endcsname}{\csname MH@symb@#1\endcsname}{\csname MH@symb@#1s\endcsname}{\csname MH@symb@#1ss\endcsname}}{\csname MH@symb@#1\endcsname}}
\makeatother

\DeclareMathAlphabet{\mathpzc}{OT1}{pzc}{m}{it}

\def\eqref#1{(\ref{#1})}

\def\rd{\mathrm{d}}

\makeatletter % Stolen from the internet to make a fat \cdot which isn't as fat as a \bullet
\newcommand*{\bigcdot}{}% Check if undefined
\DeclareRobustCommand*{\bigcdot}{%
  \mathbin{\mathpalette\bigcdot@{}}%
}
\newcommand*{\bigcdot@scalefactor}{.5}
\newcommand*{\bigcdot@widthfactor}{1.15}
\newcommand*{\bigcdot@}[2]{%
  \sbox0{$#1\vcenter{}$}% math axis
  \sbox2{$#1\cdot\m@th$}%
  \hbox to \bigcdot@widthfactor\wd2{%
    \hfil
    \raise\ht0\hbox{%
      \scalebox{\bigcdot@scalefactor}{%
        \lower\ht0\hbox{$#1\bullet\m@th$}%
      }%
    }%
    \hfil
  }%
}
\makeatother

\tcbset
{colframe=boxcolor,colback=symbols!7!pagebackground,coltext=pageforeground,
fonttitle=\bfseries,nobeforeafter,center title,size=fbox,boxsep=1.5pt,
top=0mm,bottom=0mm,boxsep=0mm,tcbox raise base}

\def\two{{\<generic>\kern0.05em\<genericb>}}
\def\twoI{{\<Ito>\kern0.05em\<Itob>}}

\def\mail#1{\burlalt{#1}{mailto:#1}}

\usepackage{thmtools} %Added%

\declaretheorem[style=definition]{example}

\begin{document}

\title{Banach fixed point and flow approach for rough analysis}
\author{\makebox[\textwidth][c]{Yvain Bruned$^1$, Yingtong Hou$^1$, Paul Laubie$^1$, Zhicheng Zhu$^2$}}
\institute{ 
 Universite de Lorraine, CNRS, IECL, F-54000 Nancy, France
 \and School of Mathematics and Statistics, Lanzhou University, Lanzhou, 730000, China
  \\
Email:\ \begin{minipage}[t]{\linewidth}
\mail{yvain.bruned@univ-lorraine.fr}
\\
\mail{yingtong.hou@univ-lorraine.fr}
\\
\mail{paul.laubie@univ-lorraine.fr}
\\
\mail{Zhicheng.Zhu_math@outlook.com}.
\end{minipage}}

\maketitle

\begin{abstract}
	In this paper, we show that the main algebraic assumption required to perform a fixed point argument for rough differential equations implies the algebraic assumption for the Bailleul flow approach. This assumption requires that the rough path associated with the equation is given by a Hopf algebra whose coproduct admits a cocycle and has a tree-like basis.
	We show that the Hopf algebra of multi-indices does not satisfy the cocycle condition. This is a rigorous result on the impossibility, observed in practice, of performing a fixed point argument for multi-indices rough paths and multi-indices in Regularity Structures.
\end{abstract}

\setcounter{tocdepth}{1}
\tableofcontents

\section{Introduction}
Decorated trees have become the main combinatorial set for solving a large class of singular stochastic partial differential equations (SPDEs) via the theory of Regularity Structures in \cite{reg,BHZ,CH16,BCCH}. The ansatz for the solution, a local expansion in terms of recentered stochastic iterated integrals follows the formalism given by B-series (see \cite{BCCH,BR23}). The B-series provides an efficient parametrisation.
These decorated trees come with two Hopf algebras in cointeraction: One for the recentering of integrals and the other for their renormalisation. The Hopf algebra for the recentering can be viewed as an extension of the so-called Butcher-Connes-Kreimer coproduct used for composing B-series in numerical analysis \cite{Butcher72} and encoding the nested subdivergences in QFT \cite{CK1}. A few years later, another combinatorial set called multi-indices emerged which is very efficient for describing scalar-valued equations such as quasi-linear SPDEs (see \cite{OSSW}) in the context of Regularity Structures. Its recentering Hopf  algebra is given in \cite{LOT} and one obtains a recursive proof for the stochastic estimates in \cite{LOTT} ; the proof for the stochastic estimates was first performed in \cite{CH16} with a non-recursive proof.
One of the drawbacks of the multi-indices approach for a while was the lack of a solution theory which is given in the context of decorated trees via Banach fixed point theorem.  
Such a path does not seem possible for multi-indices. A solution theory was uncovered in \cite{BBH26} for multi-indices rough paths introduced in \cite{Li23}. The main idea is to use the flow approach from Bailleul \cite{B15} that avoids the use of a fixed point with the idea of iterating a numerical scheme of the solution. This idea also appears in Davie's solutions for rough differential equations \cite{Davie} and the $\log$-ODE approach from \cite{CG96}. These approaches differ from the fixed point approach proposed in \cite{lyons1998,Gub06,Gubinelli2004} (see also the book \cite{FrizHai} for an introduction on rough differential equations).

In this work, we consider a rough differential equation (RDE) given by
\begin{equs}\label{RDE}
	\rd Y_t = f(Y_t) \rd X_{t} = \sum_{\alpha \in [d]} f_{\alpha}(Y_t) dX^{\alpha}_t
\end{equs}
where $ \alpha \in [d] = \{1,...,d \} $ ($X$ is $d$-dimensional), $Y$ is $m$-dimensional, and $t \in [0,T]$. The paths $X^\alpha$ for $ \alpha \in \{ 1,...,d \}$ are $\gamma$-Hölder for $\gamma \in (0,1)$. We assume that $X$ can be lifted to a rough path $ \mathbf{X}_{s,t} $ described by a graded Hopf algebra $ (\CH,\mu, \Delta) $, meaning that one has $ \mathbf{X}_{s,t} \in \mathcal{H}^{*}  $. We consider the dual of the previous Hopf algebra denoted by  $ (\CH^*, \star,\Delta_{\mu}) $ and we suppose given a pairing $ \langle \cdot, \cdot \rangle_{\CH} $ on $\CH$. This Hopf algebra gives Chen's relation
\begin{equation*}
	 \mathbf{X}_{s,t} =  \mathbf{X}_{s,u}  \star \mathbf{X}_{u,t}
\end{equation*}
with $\star$ the graded dual of $\Delta$. 
One well-know example is the Butcher-Connes-Kreimer Hopf algebra $ (\CH_{\BCK}, \odot, \Delta_{\BCK}) $ for branched rough paths \cite{Gub06}, where $ \odot $ is the forest product and $ \Delta_{\BCK} $ is the Butcher-Connes-Kreimer coproduct. The dual Hopf algebra is denoted by $(\CH_{\BCK}^*, \star_{\BCK}, \Delta_{\shuffle})$ where $\star_{\BCK}$ is the Grossman-Larson coproduct and $\Delta_{\shuffle}$ is the unshuffle coproduct. The inner product for $\CH_{\BCK}$ is denoted by $\langle \cdot, \cdot \rangle$. We denote $\Upsilon_f : \CH^*_{\BCK} \rightarrow \mathcal{C}^{\infty}$, the usual elementary differential. 
Then, one can apply the flow approach from \cite{B15} if the Hopf algebra satisfies some algebraic assumptions needed for getting elementary differentials and
 two key identities given in \cite[Definition 1.1]{KL23} that we recall below
\begin{assumption}[Algebraic flow condition] \label{flow_condition}
	We suppose that either 
	\begin{itemize}
		\item There exists a morphism $ \Lambda^* : \CH^* \rightarrow \CH_{\BCK}^* $.
	Then, one can define elementary differentials  as
	\begin{equs} \label{first_def_elementary}
		\bar{\Upsilon}_f[h] = \Upsilon_f[ \Lambda^*(h) ].
	\end{equs}
	\item There exists a morphism $ \Phi : \CH^*_{\BCK} \rightarrow \CH^* $ and elementary differentials $ \bar{\Upsilon}_f $ such that for every $\tau \in \CT$
	\begin{equs} \label{second_def_elementary}
		\bar{\Upsilon}_f[\Phi(\tau)] = \Upsilon_f[\tau].
	\end{equs}
	\end{itemize}
	For $u, v \in \CH$, smooth enough functions $\varphi, \psi$, one has
	\begin{equs} \label{key_identities_1}
		\bar{\Upsilon}_f[u \star v]\{ \phi \} 
		& = 
		\bar{\Upsilon}_f[u]  \circ \bar{\Upsilon}_f[v]\{ \phi \} ,
		\\	\label{key_identities_2}	\bar{\Upsilon}_f[ \Delta_\mu u] \{\phi \otimes \psi\}& = \bar{\Upsilon}_f[u] \{\phi \psi\}\,,
\end{equs}
where    $ \bar{\Upsilon}_f[u]\{\varphi\} $ is the extension of the elementary differentials to the ones composed with a smooth enough function $\varphi$. 
\end{assumption}
These two identities were given in a general context in \cite{KL23}. Let us mention that they are very close in spirit to the Newtonian maps introduced in \cite{Lejay}. These identities are checked in \cite{BBH26} for multi-indices. Other types of flow methods have been applied in the context of singular SPDEs. Let us summarise below the ideas behind the different flow approaches:
\begin{itemize}
	\item Flow along the time parameter: This is performed in \cite{B15} and \cite{BBH26}. It is not clear that such an approach will work in the context of singular SPDEs with Regularity Structures.
	\item Flow along a scale $ \lambda $ that appears in the kernel $K_{\lambda}$ in the mild formulation of the singular SPDEs. This is the flow approach of Pawel Duch \cite{Duc21} inspired by the Polchinski flow \cite{P84} used in QFT for renormalising Feynman diagrams. A solution theory with multi-indices has been derived for the generalised KPZ equation in \cite{CF24a}.
	\item Flow along a small parameter multiplying the non-linear interaction of the singular SPDEs. This is the strategy used in \cite{BOS25}.
\end{itemize}
Behind these various methods, one expects to find in the proofs, algebraic identities similar to Assumption~\ref{flow_condition} needed for the solution theory.

In \cite{GLMZ25}, the authors exhibit an algebraic assumption that is sufficient for performing a fixed point argument in the context of rough differential equations. It is mainly given by a cocycle condition on the Hopf algebra $\CH$. We recall it in the next assumption
\begin{assumption}[Algebraic fixed point]\label{assumption_fixed_point}
 The Hopf algebra $ \CH$  is equipped with  
\ $L_{\alpha}: \CH \rightarrow \CH$,
a family of linear maps indexed by $\alpha \in [d]$,  homogeneous of degree one, satisfying the cocycle condition
\begin{equs}\label{eq:one_cocycle}
	\Delta L_{\alpha} (u) = (\id \otimes L_{\alpha}) \Delta(u) + L_{\alpha} (u) \otimes \mathbf{1}.
\end{equs}
Here, $ \one $ is the unit of $H$. 
Moreover, one has a pairing $ \langle \cdot, \cdot  \rangle_{\CH} : \CH \otimes \CH \rightarrow $ such that
\begin{itemize}
	\item $ L_{\alpha}(\one) $ is a basis of $\CH_1$.
	\item For every $x, y \in \CH$, one has
	\begin{equation}
		\label{pairing_xy}
		\langle  L_{\alpha}(x) , L_{\beta}(y) \rangle_{\CH} = \delta_{\alpha}^{ \beta} \langle x,y \rangle_{\CH}.
		\end{equation}
\end{itemize} 
\end{assumption}

From \cite[Theorem 2]{CK1} on the universal property of $\mathcal{H}_\BCK$, the Butcher-Connes-Kreimer Hopf algebra, identity \eqref{eq:one_cocycle}, the existence of $1$-cocycle implies the existence of a unique  morphism $ \Lambda : \mathcal{H}_\BCK\to\CH$ sending $\mathcal{B}_+^\alpha$ to $L_{\alpha}$. We denote the dual map by $\Lambda^*: \CH^* \rightarrow \CH_{\BCK}^*$. Here, $ \CB^\alpha_+ $ takes a forest and connects all the roots to a new root labelled by $\alpha$. Now, we can state one of the main results of the present paper:
\begin{theorem} \label{main_thm_1} Assumption~\ref{assumption_fixed_point} implies Assumption~\ref{flow_condition}.
	\end{theorem}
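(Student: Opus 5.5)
We take the first option of Assumption~\ref{flow_condition}. By the universal property of $\CH_{\BCK}$ \cite[Theorem 2]{CK1}, the cocycle condition \eqref{eq:one_cocycle} yields a unique Hopf algebra morphism $\Lambda : \CH_{\BCK} \to \CH$ with $\Lambda \circ \CB_+^\alpha = L_\alpha \circ \Lambda$. Its graded dual $\Lambda^* : \CH^* \to \CH^*_{\BCK}$ is then a Hopf algebra morphism between the duals. Here duality is taken with respect to $\langle \cdot,\cdot\rangle_{\CH}$ and $\langle\cdot,\cdot\rangle$, and the pairings are nondegenerate and graded. We define $\bar\Upsilon_f[h] := \Upsilon_f[\Lambda^*(h)]$ as in \eqref{first_def_elementary}. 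The theorem then reduces to checking \eqref{key_identities_1} and \eqref{key_identities_2} for $\bar\Upsilon_f$.

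The first step is to make precise that $\Lambda^*$ is a morphism in the dual sense. Since $\Lambda$ is multiplicative and preserves the counit, $\Lambda^*$ intertwines the coproducts: $\Delta_{\shuffle}\Lambda^* = (\Lambda^*\otimes\Lambda^*)\Delta_\mu$. Since $\Lambda$ intertwines the coproducts, $(\Lambda\otimes\Lambda)\Delta_{\BCK} = \Delta\Lambda$, we get $\Lambda^*(u\star v) = \Lambda^*(u)\star_{\BCK}\Lambda^*(v)$. Both identities are checked by pairing against basis elements and using the definitions of $\star$ and $\Delta_\mu$ as graded duals of $\Delta$ and $\mu$. Gradedness is needed so that the duals are finite-dimensional in each degree. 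The grading is compatible because $\Lambda$ sends a tree with $n$ vertices to degree $n$, as each $L_\alpha$ has degree one.

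Next we transport the identities from the branched setting. The classical identities for $\Upsilon_f$ on $\CH^*_{\BCK}$ are
\[
\Upsilon_f[\sigma\star_{\BCK}\tau]\{\phi\} = \Upsilon_f[\sigma]\circ\Upsilon_f[\tau]\{\phi\}, \qquad \Upsilon_f[\Delta_{\shuffle}\sigma]\{\phi\otimes\psi\} = \Upsilon_f[\sigma]\{\phi\psi\}.
\]
These are the Grossman--Larson composition of differential operators and the Leibniz rule for the unshuffle coproduct, as in \cite{KL23}. Then
\[
\bar\Upsilon_f[u\star v]\{\phi\} = \Upsilon_f[\Lambda^*u\star_{\BCK}\Lambda^*v]\{\phi\} = \bar\Upsilon_f[u]\circ\bar\Upsilon_f[v]\{\phi\}.
\]
Similarly, $\bar\Upsilon_f[\Delta_\mu u]\{\phi\otimes\psi\} = \Upsilon_f[(\Lambda^*\otimes\Lambda^*)\Delta_\mu u]\{\phi\otimes\psi\} = \Upsilon_f[\Delta_{\shuffle}\Lambda^*u]\{\phi\otimes\psi\} = \bar\Upsilon_f[u]\{\phi\psi\}$.

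The main obstacle is the identification of $\Lambda^*$ as a morphism $(\CH^*,\star,\Delta_\mu)\to(\CH^*_{\BCK},\star_{\BCK},\Delta_{\shuffle})$, given that the paper fixes pairings on both sides. The delicate point is that $\star_{\BCK}$ is the dual of $\Delta_{\BCK}$ for the pairing $\langle\cdot,\cdot\rangle$, which carries symmetry factors. We must therefore check that duality is taken consistently, identifying $\CH^*$ with $\CH$ through $\langle\cdot,\cdot\rangle_\CH$.

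Here \eqref{pairing_xy} and the fact that $L_\alpha(\one)$ is a basis of $\CH_1$ are used. They give that $\Lambda$ sends the degree-one generators $\bullet_\alpha$ to an orthonormal family. Together with the recursive structure, this ensures that $\Lambda^*$ restricted to primitive degree-one elements matches the letters $\alpha$. Consequently $\Upsilon_f[\Lambda^*(\cdot)]$ reproduces $f_\alpha$ on degree one, consistent with the RDE \eqref{RDE}. The adjoint of $L_\alpha$, which satisfies $L_\alpha^* L_\beta = \delta_\alpha^\beta\,\id$ by \eqref{pairing_xy}, can be used to compute $\Lambda^*$ recursively if an explicit formula is desired. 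The two key identities, however, only require the morphism property.
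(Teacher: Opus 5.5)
Your proposal is correct and follows the paper's route. You obtain $\Lambda$ from the Connes--Kreimer universal property, dualise it to a Hopf algebra morphism $\Lambda^*$, and transport the two $\CH_{\BCK}$ identities, exactly as in Proposition~\ref{flow_morphism}. The one difference is that the paper proves the branched identities itself (Propositions~\ref{morphism_element} and~\ref{prop:key_indentity_2}), because \cite{KL23} treats planar trees, whereas you cite them. Your remarks on gradedness and on $\Lambda^*(L_\alpha(\mathbf{1}))=\bullet_\alpha$ are correct but not needed for the two key identities.
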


	The fact that some combinatorial sets possess two solution theories, fixed point and flow, has been known since \cite{B15} (Shuffle Hopf Algebra) and \cite{B21} (Butcher-Connes-Kreimer Hopf algebra).

	Then, a natural question is to find a counter-example to the opposite direction proposed by the previous theorem. We are able to provide multi-indices as a counter-example via the second main result of the present paper. Indeed, one can have a morphism between $ \CH_{\BCK} ^*$ and $ \CH^* $ without \eqref{eq:one_cocycle}. This  fact has been observed for multi-indices and it is explained in the proof of Proposition
	 \ref{prop_flow_multi}.  

 \begin{theorem} \label{main_thm_2}
 	The multi-indices Hopf algebra does not satisfy Assumption~\ref{assumption_fixed_point}.
 \end{theorem}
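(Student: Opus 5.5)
The plan is to argue by contradiction: assume there are maps $L_\alpha$ satisfying Assumption~\ref{assumption_fixed_point}, and show that the cocycle identity \eqref{eq:one_cocycle}, the normalisation of $L_\alpha(\one)$ and the isometry \eqref{pairing_xy} cannot all hold in low degree for the multi-indices Hopf algebra. The computation should only need one direction $\alpha$, so I would first reduce to $d=1$ (or fix one $\alpha$ and look at the sub-Hopf algebra generated by multi-indices involving only that colour). I would then write the multi-indices Hopf algebra $\CH$ explicitly in degrees $\le 3$:
\begin{itemize}
\item as a polynomial algebra in the populated multi-indices, with $\CH_1=\mathrm{span}\{z_0\}$;
\item $\CH_2$ spanned by $z_0^2$ and the generator $z_0z_1$;
\item $\CH_3$ spanned by $z_0^3$, $z_0\cdot z_0z_1$, $z_0^2z_2$ and $z_0z_1^2$;
\item the coproduct of \cite{LOT,Li23}, made explicit on these generators, including which products appear on the right-hand tensor factor.
\end{itemize}

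The first step uses the constraints. Since $L(\one)$ spans $\CH_1$, we have $L(\one)=c\,z_0$ with $c\neq 0$. The ansatz $L(z_0)=a\,z_0^2+b\,z_0z_1$ is then forced by the cocycle identity
\begin{equs}
\Delta L(z_0)=L(z_0)\otimes\one+\one\otimes L(z_0)+c\, z_0\otimes z_0 .
\end{equs}
Comparing with the explicit $\Delta(z_0^2)$ and $\Delta(z_0z_1)$ gives a linear relation between $a$ and $b$. The isometry gives
\begin{equs}
\langle L(z_0),L(z_0)\rangle_{\CH}=\langle z_0,z_0\rangle_{\CH}, \qquad \langle L(z_0),L(\one)\rangle_{\CH}=0 ,
\end{equs}
where the second holds trivially for degree reasons. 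A useful structural fact is that the isometry makes each $L_\alpha$ injective with mutually orthogonal images, so $\dim\CH_{n+1}\ge d\dim\CH_n$ along the image. However, the products $z_0^n$ also live in $\CH_{n+1}$, so dimension counting alone is probably not decisive.

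The second step goes to degree 3. I would take $L(z_0^2)$ and $L(z_0z_1)$ as general combinations of the four basis elements of $\CH_3$ and impose \eqref{eq:one_cocycle} on each. The right-hand side involves terms such as $z_0\otimes L(z_0)$ and $(z_0z_1)\otimes L(\one)$. The left-hand side is constrained by the specific shape of the multi-index coproduct: the derivation-type structure in \cite{LOT} produces coefficients such as $z_0z_1\otimes z_0$ and $z_0\otimes z_0z_1$ with combinatorial weights differing from those the cocycle requires. The aim is to show the resulting linear system together with the degree-2 relation has no solution with $c\neq0$, or that every solution violates $\langle L(x),L(y)\rangle=\langle x,y\rangle$, for instance by forcing $L(z_0^2)$ and $L(z_0z_1)$ to be linearly dependent, which contradicts injectivity.

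The main obstacle is getting the coproduct of the multi-indices Hopf algebra exactly right in degrees 2 and 3, including normalisations (symmetry factors, the pairing $\langle z^\beta,z^{\beta'}\rangle=\beta!\,\delta_{\beta\beta'}$ or similar). The contradiction is a delicate mismatch of coefficients. I would first try to locate it already in degree 2, checking whether $\Delta(z_0z_1)$ contains a term like $z_0\otimes z_0$ with a coefficient incompatible with $c$. If degree 2 is consistent, I would push to degree 3 as above. As a cross-check, I would use the morphism $\Lambda:\CH_{\BCK}\to\CH$ given by the universal property in \cite{CK1}. The isometry would force $\Lambda^*$ to intertwine pairings on trees built by iterated $\CB_+$, and I would compare this with the known surjection from trees to multi-indices, which identifies trees with different symmetry factors.
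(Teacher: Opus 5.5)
\textbf{The gap: degrees $\le 3$ cannot produce a contradiction.} Your concrete plan looks for the contradiction in degrees $2$ and $3$, from $L(\mathbf{1})$, $L(z_0)$, $L(z_0\cdot z_0)$ and $L(z_1z_0)$, using the cocycle system and \eqref{pairing_xy}. No contradiction exists there. In degrees $\le 3$ every populated multi-index ($z_0$, $z_1z_0$, $z_2z_0z_0$, $z_1z_1z_0$) comes from exactly one tree, and $S_{\text{\tiny M}}=S$ on these and on the corresponding forests. Hence the dual $\Phi^*$ of the morphism $\Phi$ identifies $\mathcal{H}_{\mathbf{M}}$ with $\mathcal{H}_{\BCK}$ in degrees $\le 3$, products, coproducts and pairings included. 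Transporting $\mathcal{B}_+$ then gives $L(\mathbf{1})=z_0$, $L(z_0)=z_1z_0$, $L(z_0\cdot z_0)=z_2z_0z_0$ and $L(z_1z_0)=z_1z_1z_0$. These satisfy both \eqref{eq:one_cocycle} and \eqref{pairing_xy} on inputs of degree $\le 2$. In particular, $\bar{\Delta}^{\mathbf{M}}(z_1z_0)=z_0\otimes z_0$ has exactly the coefficient the cocycle needs, and $L(z_0\cdot z_0)$, $L(z_1z_0)$ come out orthogonal rather than dependent. So your degree-$2$ and degree-$3$ steps cannot succeed. The isometry is also not needed: in the paper (Theorem~\ref{thm_cocycle_multi}) the cocycle identity alone forces $L(\mathbf{1})=0$.

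\textbf{What is needed: go to the degrees where $\Phi$ identifies distinct trees.} Your closing ``cross-check'' only gestures at this. Its isometry version does not work either, since nothing links $\langle\cdot,\cdot\rangle_{\mathcal{H}}$ to the product, so $\Lambda$ need not be an isometry on forests. The paper parametrises $L(\mathbf{1})=6\lambda z_0$, $L(z_0)$ and $L(z_1z_0)$ by their free primitive parts. It then imposes \eqref{eq:one_cocycle} on $L(z_1z_1z_0)$ and $L(z_2z_0z_0)$ in degree $4$, where $z_2z_1z_0z_0$ comes from two trees, and concludes $\lambda=0$.

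Be careful when you redo this step. Use the coproduct exactly as in \eqref{explicit_coproduct_BCK_2}; for instance $\bar{\Delta}^{\mathbf{M}}(z_1z_1z_0)=z_0\otimes z_1z_0+z_1z_0\otimes z_0$, with coefficient $1$, not the $2$ of the table. With that coproduct the degree-$4$ equations remain solvable with $L(\mathbf{1})=z_0$. For example, $L(z_1z_1z_0)=z_1z_1z_1z_0$ and $L(z_2z_0z_0)=\tfrac12(z_0\cdot z_2z_0z_0-z_1z_0\cdot z_1z_0-z_3z_0z_0z_0+z_2z_1z_0z_0)$ satisfy the cocycle identity.

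\textbf{Where the obstruction actually sits.} It appears one degree higher and is cleanest in the dual picture. Let $\mathfrak{g}$ be the Lie algebra of populated multi-indices, with bracket $[x,y]=x\,\partial y-y\,\partial x$ where $\partial z_k=z_{k+1}$, so that $U(\mathfrak{g})=\mathcal{H}_{\mathbf{M}}^*$. A cocycle $L$ is the same as a degree $-1$ map $\psi:\mathfrak{g}\to U(\mathfrak{g})$ with $\psi([x,y])=x\star_{\mathbf{M}}\psi(y)-y\star_{\mathbf{M}}\psi(x)$, and $L(\mathbf{1})\neq 0$ if and only if $\psi(z_0)\neq 0$.
\begin{itemize}
\item Up to degree $4$, $\mathfrak{g}$ has no relations: the brackets $[z_0,z_1z_0]$, $[z_0,z_2z_0z_0]$ and $[z_0,z_1z_1z_0]$ are independent. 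This is why nothing goes wrong earlier.
\item In degree $5$ one has the relation $[z_0,z_1z_1z_1z_0]=3[z_1z_0,z_1z_1z_0]$, both sides being $3z_2z_1z_1z_0z_0$.
\item Apply $\psi$ to this relation. Every resulting term except $-\psi(z_0)\,z_1z_1z_1z_0$ is a product of positive-degree elements.
\item Such products meet $\mathfrak{g}$ only in $[\mathfrak{g},\mathfrak{g}]$, and $z_1z_1z_1z_0\notin[\mathfrak{g},\mathfrak{g}]$.
\end{itemize}
Hence $\psi(z_0)=0$, i.e.\ $L(\mathbf{1})=0$, which contradicts Assumption~\ref{assumption_fixed_point}.
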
 
 This theorem is just a consequence of Theorem \ref{thm_cocycle_multi} which shows that the multi-indices do not satisfy \eqref{eq:one_cocycle} meaning that one cannot find a $1$-cocycle $L$ such that $L(\one)\neq0$.

 The two above-mentioned theorems can be summarised in the following way:  
 \textit{Decorated trees can be used for fixed point and flow approaches  whereas multi-indices works only for flow approaches.}
 The second part of this claim is not rigorous as the Assumption~\ref{assumption_fixed_point} is a sufficient but not a necessary assumption for getting the fixed point. But we still get a mathematical theorem on what has been observed in practice (Only flow approaches have been applied to multi-indices), that is showing that the algebraic assumption for the fixed point is not satisfied. We conclude this introduction with a couple of remarks before summarising the content of the paper.

 \begin{remark} One can get an analogue of Assumption~\ref{assumption_fixed_point} within the context of Regularity Structures. One expects to get a deformed cocycle property in the sense that there exists  
 	$L_{\alpha}: \CH \rightarrow \CH$,
 	a family of linear maps indexed by $\alpha \in \mathbb{N}^{d+1}$,  homogeneous of degree one, satisfying the deformed cocycle condition
 	\begin{equs}\label{eq:one_cocycle_reg}
 		\Delta L_{\alpha} (u) = (L_{\alpha} \otimes \id) \Delta(u) + \sum_{\ell \in \mathbb{N}^{d+1}} \frac{X^{\ell}}{\ell!} \otimes L_{\alpha + \ell} (u).
 	\end{equs}
 	where now monomials of the form $X^{\ell}$ are part of $\CH$.
 	A similar expression is given in \cite[Proposition 4.17]{BHZ} but used for the recentering and it appears for the first time in \cite[Section 8]{reg} expressed for a different decorated trees basis.
 	In the identity \eqref{eq:one_cocycle_reg}, one has an infinite sum but, in practice, it is finite as the  decorations $ \alpha + \ell $ cannot be arbitrarily large. One can also make sense of this infinite sum by using a bigrading introduced in \cite[Section 2.3]{BHZ}. 
Then, one has to use this condition to show the fixed point via Regularity Structures following \cite{BCCH} where a B-series formalism is used for the solution expansion.

The identity \eqref{key_identities_1} can be proved the same way as in \cite[Proposition 2.2]{BB26} which gives a morphism property for $ \star $ dual of $\Delta$. One has to use the explicit formula for the product $ \star $ given in \cite[Proposition 3.17]{BM22}. The other identity \eqref{key_identities_2} which corresponds to a Leibniz rule should be more straightforward. The fact that Regularity Structures multi-indices will not satisfy Assumption \ref{assumption_fixed_point} will follow the same argument as for rough paths multi-indices. Therefore, one gets a theoretical argument regarding the failure of finding a fixed point for the multi-indices for singular SPDEs.
 \end{remark}
 
 \begin{remark}
 	The paper \cite{GLMZ25} only covers the case where the equation \eqref{RDE}  takes values in $ \mathbb{R}^{m} $. In \cite{Manchon20}, the authors consider equations taking values in homogeneous space which requires non-commutative Hopf algebras. We expect similar assumptions to work in this case but one has to check analytically that a variant of Assumption \ref{assumption_fixed_point} provides a fixed point in this context.
 	The same is true for the Assumption \ref{flow_condition} as in \cite{KL23} the manifold case is treated by using charts. 
 	\end{remark}
 	
 	Let us outline the paper by summarising the content of its sections. In Section \ref{Sec::2}, we recall the derivation of the notion ``rough paths", and motivate the Hopf algebra and the one-cocycle condition \eqref{eq:one_cocycle}. Moreover, we briefly recall important concepts in Butcher-Connes-Kreimer Hopf algebra since the proof of our main theorem is based on the morphism sending $\CH^*$ to $\CH_{\BCK}^*$. 
 	
 	In Section \ref{Sec::3}, we use the universal result from Theorem \ref{thm:univBCK} to get the existence of a morphism $ \Lambda :  \CH_{\BCK} \rightarrow \CH_{} $ from the $1$-cocycle of $\CH$. Then, we are able in Proposition \ref{symmetry_H} to define a $\ST$-B-series from the B-series on the trees. We follow by proving a formula for the  composition law of $\ST$-B-series in Corollary \ref{composition_H} that proceeds from the one on the trees in Proposition \ref{prop:composition}. We finish the section by introducing controlled rough paths (see Definition \ref{control_rough_path}) and we show that the B-series based on the rough path $\mathbf{X}$ is a controlled rough path in Corollary \ref{corollary:controlled_RP}.
 	
 		In Section \ref{Sec::4}, we prove that the elementary differentials of $\ST$-B-series obtained in Section \ref{Sec::3} satisfy the two key identities in Assumption \ref{flow_condition}. This is a consequence of Proposition \ref{flow_morphism} that uses deeply the results on $ \CH_{\BCK} $ (see Propositions \ref{morphism_element} and \ref{prop:key_indentity_2}). We conclude this section by the proof of Theorem \ref{main_thm_1}.
 		
 			In Section \ref{Sec::5}, we discuss the solution theory of RDEs driven by multi-indices rough paths. We show in Theorem \ref{thm_cocycle_multi} that the multi-indices Hopf algebra does not satisfy the one-cocycle condition which is a natural sufficient condition in implementing the fixed point argument for RDEs solutions. This somehow answers the question in the literature that why it seems to be impossible to find the fixed point for multi-indices RDEs.  We finish the section by showing that multi-indices satisfy Assumption \ref{flow_condition} in Proposition \ref{prop_flow_multi}.

 \subsection*{Acknowledgements}
 
 {\small
 	Y. B., Y. H., P. L. gratefully acknowledge funding support from the European Research Council (ERC) through the ERC Starting Grant Low Regularity Dynamics via Decorated Trees (LoRDeT), grant agreement No.\ 101075208. Views and opinions expressed are however those of the author(s) only and do not necessarily reflect those of the European Union or the European Research Council. Neither the European Union nor the granting authority can be held responsible for them. 
 	This project was started the week of the 6th October 2025 when Zhicheng Zhu visited the Université de Lorraine in Nancy. This visit was funded by the ERC LoRDeT.

 } 
 
\section{Rough paths and Hopf algebras}
\label{Sec::2}
\subsection{Rough paths}
Let us briefly recall the derivation the notion rough paths and their associated Hopf-algebraic definition from the RDEs' point of view. Recall that the target RDE \eqref{RDE} is
\begin{equs}
	\rd Y_t =  \sum_{\alpha \in [d]} f_\alpha(Y_t) \rd X^\alpha_{t}
\end{equs}
where $[d]:=\{1, \ldots, d\}$ and $t \in [0,T]$.
Suppose the integration with respect to $X_t$ is well-defined. Then, the equation can be rewritten in the  integral form
\begin{equs}
	\rd Y_t =   Y_s + \sum_{\alpha \in [d]}  \int_{s}^t f_\alpha(Y_{r_1}) \rd X^\alpha_{r_1}
\end{equs}
for any $0 \le s \le t \le T$. Taylor expansion of $f_a(Y_{r_1})$ around $Y_s$ yields 
\begin{equs}
	\rd Y_t =   Y_s + \sum_{\alpha \in [d]}  
	\sum_{n \in \mathbb{N}} \sum_{b \in [m]^n } 
	\frac{1}{n!}
	\int_{s}^t 
	\prod_{i =1}^n\partial_{b_i} f_\alpha(Y_{s})(Y_{r_1}^{b_i}-Y_{s}^{b_i}) \rd X^\alpha_{r_1} 
\end{equs}
where $\partial_{b_i}$ is the partial derivative in the direction of the $b_i$-th variable. Notice that we can repeat the ``integration-Taylor procedure" for $Y_{r_1}$ and get
\begin{equs} \label{eq:Taylor}
	\rd Y_t &=   Y_s 
	+ 
	\sum_{\alpha \in [d]}   f_\alpha(Y_{s}) \int_{s}^t \rd X^a_{r_1} 
	\\&+
	\sum_{\alpha \in [d]}  
	\sum_{\beta \in [d]}
	\sum_{b \in [m] } 
	f_\beta^b\partial_{b} f_a(Y_{s})
	\int_{s}^t\int_{s}^{r_1} \rd X^\beta_{r_2}\rd X^\alpha_{r_1}  
	\\&+
	\sum_{\alpha \in [d]}  
	\sum_{\beta \in [d]^2}
	\sum_{b \in [m]^2 } 
	f_{\beta_1}^{b_1}f_{\beta_2}^{b_2}\partial_{b_1b_2} f_a(Y_{s})
	\int_{s}^t\int_{s}^{r_1} \rd X^{\beta_1}_{r_2}   \int_{s}^{r_1} \rd X^{\beta_2}_{r_3} \rd X^\alpha_{r_1}  
	+ R
\end{equs}
where $f_\beta^b$ is the $b$-th entry of $f_\beta$, $\partial_{b_1b_2}$ is a shorthand notation of $\partial_{b_1}\partial_{b_2}$, and the remainder $R$ can be made explicit by iterating the integration-Taylor procedure.
One observes that if we want to formally and algebraically encode such expansion of the solution via a Hopf algebra $\CH$, there are three main ingredients that need to be defined: 
\begin{itemize}
	\item the multiplication of iterated integrals (from monomials in the Taylor expansion),
	\item the composition of integrals, which satisfies the Chen's relation \cite{Chen},
	\item the iteration that preserves necessary properties of iterated integrals.
\end{itemize}
The first two notions lead to Definition \ref{def:rough path} of rough paths, and the third property is ensured by the one-cocycle condition \eqref{eq:one_cocycle} which links elements in $\CH$ to tree structures.
%
%Suppose that we have two Hopf algebras $\CH(\mu, \Delta)$ and $\CH^*(\star, \Delta_\mu)$ which are dual to each other. Let us encode the iterated integrals in the expansion solutions by $\langle \mathbf{X}, \Ch \rangle$ with
%\begin{equs}
%	\langle \mathbf{X}_{s,t}, \bullet_i \rangle = \int_s^t \rd X^i_r
%\end{equs}
%where $\mathbf{X} \in \CH^*$, $\Ch \in \CH$, and $\bullet_i$ are basis of $\CH_1$. Here, the integral is in the rough sense (It\^o, Stratonovich, etc.) since the classical integration is ill-defined for $X$. We further impose that $\star$ is adjoint to $\Delta$ and that  $\mu$ is adjoint to $\Delta_\mu$, i.e., for any $\Ch_1^*,\Ch_2^*,\Ch^* \in \CH^*$ and any $\Ch_1,\Ch_2,\Ch \in \CH$,
%\begin{equs}\label{eq:adjoint1}
%	\langle \Ch_1^* \star  \Ch_2^*, \Ch\rangle = \langle \Ch_1^* \otimes \Ch_2^*, \Delta \Ch\rangle
%\end{equs}
%and 
%\begin{equs}\label{eq:adjoint2}
%	\langle \Ch^*, \mu (\Ch_1, \Ch_2)\rangle = \langle \Delta_\mu \Ch^*, \Ch_1 \otimes \Ch_2\rangle.
%\end{equs}
%Then, we can use $\mu$ to represent the multiplication of iterated integrals, and use $\star$ to encode the composition of them, which leads to the following definition.
Since classical iterated integrals of $X$ is ill-defined, one needs to find some stochastic integration such as It\^o or Stratonovich integrals to make sense of the rough integrals. However, the previously mentioned properties of iterated integrals are needed to be preserved, which leads to the notion of rough paths.

Suppose $\CH(\mu, \Delta) = \bigoplus_{n\in \mathbb{N}}\mathcal{H}_n$ and $\CH^*(\star, \Delta_\mu)= \bigoplus_{n\in \mathbb{N}}\mathcal{H}^*_n$ are a graded Hopf algebra and its graded dual Hopf algebra. Further denote $|h|$ the grade of $h \in \CH$, i.e., $h \in \mathcal{H}_{|h|}$.
%
%Suppose that we have two Hopf algebras $(\CH,\mu, \Delta)$ and $(\CH^*,\star, \Delta_\mu)$ which are dual to each other meaning that we are given a pairing $ \langle \cdot, \cdot \rangle $ such that
% $\star$ is adjoint to $\Delta$ and that  $\mu$ is adjoint to $\Delta_\mu$. One has for any $h_1^*,h_2^*,h^* \in \CH^*$ and any $h_1,h_2,h \in \CH$,
%\begin{equs}\label{eq:adjoint1}
%	\langle h_1^* \star  h_2^*, h\rangle = \langle h_1^* \otimes h_2^*, \Delta h\rangle
%\end{equs}
%and 
%\begin{equs}\label{eq:adjoint2}
%	\langle \Ch^*, \mu (\Ch_1, \Ch_2)\rangle = \langle \Delta_\mu \Ch^*, \Ch_1 \otimes \Ch_2\rangle.
%\end{equs}
%
% Let us encode the iterated integrals in the expansion solutions by $\langle \mathbf{X}, \Ch \rangle$ with
%\begin{equs}
%	\langle \mathbf{X}_{s,t}, \bullet_i \rangle = \int_s^t \rd X^i_r
%\end{equs}
%where $\mathbf{X} \in \CH^*$, $\Ch \in \CH$, and $\bullet_i$ are basis of $\CH_1$. Here, the integral is in the rough sense (It\^o, Stratonovich, etc.) since the classical integration is ill-defined for $X$. 
%Then, we can use $\mu$ to represent the multiplication of iterated integrals, and use $\star$ to encode the composition of them, which leads to the following definition.
\begin{definition}[Rough paths] \label{def:rough path}
	A $\CH$-rough path of regularity $\gamma$ is a map $\mathbf{X} : [0, T] \times [0, T] \rightarrow \CH^*$ satisfying the following three
	conditions
	\begin{enumerate}
		\item For every $h_1, h_2 \in \CH$
		\begin{equs}\label{eq:multiplication_integral}
			\langle \mathbf{X}_{s,t}, \mu(h_1,  h_2) \rangle := \langle \mathbf{X}_{s,t}, h_1 \rangle \cdot \langle \mathbf{X}_{s,t}, h_2 \rangle.
		\end{equs}
		\item (Chen's relation) For any $0\le s \le r \le t \le T$,
		\begin{equs}\label{eq:Chen}
			\mathbf{X}_{s,t} = \mathbf{X}_{s,r} \star \mathbf{X}_{r,t}.
		\end{equs}
		%		$\star$ is determined by $\mu$
		%		as the coproduct $\Delta_\mu$ in $\CH^*(\star,\Delta_\mu)$ is the adjoint of $\mu$.
		%		$\star$ and $\Delta_\mu$ satisfies the property of a bialgebra.
		\item For any $h \in \mathcal{H}_{|h|}$
		\begin{equs}
			\sup_{s \ne t}  \frac{|\langle\mathbf{X}_{s,t}, h \rangle|}{|t-s|^{\gamma|h|}} < \infty.
		\end{equs}
	\end{enumerate}
\end{definition}
\begin{remark}
 In practice, $\mu$ is determined by the property of the chosen rough integration. For example, if the rough integral satisfies integration by parts, the shuffle product is chosen. 
 
 The Chen's relation can be expressed equivalently in the dual way
		\begin{equs}
			\langle	\mathbf{X}_{s,t}, h \rangle = \langle \mathbf{X}_{s,r} \otimes \mathbf{X}_{r,t}, \Delta h \rangle = \langle \mathbf{X}_{s,r}, h^{(1) }\rangle\langle \mathbf{X}_{s,r}, h^{(2) }\rangle
		\end{equs}
		where we use the Sweedler notation for $\Delta$ given by
		\begin{equation*}
			\Delta h  = \sum_{(h)}  h^{(1)} \otimes h^{(2)}. 
		\end{equation*}
\end{remark}

\subsection{Butcher-Connes-Kreimer Hopf algebra and branched rough paths}

The Butcher-Connes-Kreimer Hopf algebra $\CH_{\BCK}(\mathrm{Span}_{\mathbb{R}}(\CF), \odot, \Delta_{\BCK})$, is defined on the linear span of forests of non-planar rooted trees. Here, $\odot$ is the commutative forest product which represents the juxtaposition of rooted trees. We denote the set of forests by $\CF$ and the set of rooted trees by $\CT$. $\CH_{\BCK}$ is equipped with $1$-cocycle $(\CB^{\alpha}_+)_{\alpha \in [d]}$ which is a family linear maps. Consider a forest $\prod_{i}^{\odot n} \tau_i$, $\CB_{+}^{\alpha}\left(\prod_{i}^{\odot n} \tau_i\right)$ amounts to grafting $\tau_i$ to a common root meaning that all the roots of the $\tau_i$ are connected via new edges to a new root decorated by $\alpha$.
\begin{example} We provide below an example of computation with the map $ \mathcal{B}_+^\alpha $
\begin{equs}
	\CB_{+}^{\alpha}\left(
	\begin{tikzpicture}[scale=0.2,baseline=0.1cm]
		\node at (0,1)  [dot,label= {[label distance=-0.2em]above: \scriptsize  $ \beta $} ] (root) {};
	\end{tikzpicture}
	\odot
\begin{tikzpicture}[scale=0.2,baseline=0.1cm]
	\node at (0,1)  [dot,label= {[label distance=-0.2em]below: \scriptsize  $ \gamma $} ] (root) {};
	\node at (0,3)  [dot,label={[label distance=-0.2em]right: \scriptsize  $ \delta $}] (center) {};
	\draw[kernel1] (center) to
	node [sloped,below] {\small }     (root);
\end{tikzpicture}
	\right)
	= \begin{tikzpicture}[scale=0.2,baseline=0.1cm]
	\node at (0,1)  [dot,label= {[label distance=-0.2em]below: \scriptsize  $ \alpha $} ] (root) {};
	\node at (1,3)  [dot,label={[label distance=-0.2em]right: \scriptsize  $ \gamma $}] (right) {};
	\node at (1,5)  [dot,label={[label distance=-0.2em]right: \scriptsize  $ \delta $}] (rightc) {};
	\node at (-1,3)  [dot,label={[label distance=-0.2em]above: \scriptsize  $ \beta $} ] (left) {};
	\draw[kernel1] (right) to
	node [sloped,below] {\small }     (root);
	\draw[kernel1] (right) to
	node [sloped,below] {\small }     (rightc); \draw[kernel1] (left) to
	node [sloped,below] {\small }     (root);
	\end{tikzpicture}.
\end{equs}
\end{example}
We identify $\CH_{\BCK}$ with its dual $\CH^*_{\BCK}$ via the basis of forest from $  \CH_{\BCK}$.
\begin{definition}[$\CH_\BCK$ perfect pairing]\label{dfn:perfBCK}  
	The perfect pairing of $\CH_\BCK$ is
	\begin{equs}\label{eq:pairing}
		&\langle \cdot, \cdot \rangle: \CH^*_{\BCK} \times \CH_{\BCK} \rightarrow \mathbb{R}
		\\&
		\langle \Cf_1^*, \Cf_2 \rangle := \delta^{\Cf_1}_{\Cf_2} S(\Cf_2), \quad\quad\text{ for any $\Cf_1^* \in \CF^*$ and any $\Cf_2 \in \CF$}
	\end{equs}
	where $\delta$ is the Kronecker delta and $S$ is the symmetry factor in Definition \ref{def:symmetry_factor}. 
\end{definition}
Due to this identification, we will make the following abuse of notations in the sequel  $\tau, \Cf$ for $\tau^*, \Cf^*$. Branched rough paths introduced in \cite{Gub06} live in  $\CH_{\BCK}^*(\mathrm{Span}_{\mathbb{R}}(\CF), \star_{\BCK}, \Delta_{\shuffle})$, the dual of $\CH_{\BCK}$. The unshuffle coproduct runs over the partition of forest into two, and trees are primitive elements of $\Delta_{\shuffle}$, i.e., for any $\tau, \sigma \in \CT$
\begin{equs}
	&\Delta_{\shuffle} \tau = \mathbf{1} \otimes \tau + \tau \otimes \mathbf{1}, \quad 
	\Delta_{\shuffle} (\tau \odot \sigma) = \Delta_{\shuffle} \tau \odot \Delta_{\shuffle} \sigma,
\end{equs}
where $\mathbf{1}$ is the trivial (empty) forest, and $$(h_1 \otimes h_2) \odot (h_3 \otimes h_4) :=  (h_1 \odot h_3) \otimes( h_2 \odot h_4).$$
The $\star_{\BCK}$ product is the Grossman-Larson product \cite{GL89} which can be also obtained through the Guin-Oudom construction \cite{Guin1,Guin2} on the pre-Lie grafting product $\triangleright_{\BCK}$ of rooted trees. We recall the definitions of $\triangleright_{\BCK}$ and $\star_{\BCK}$. 
\begin{definition}[Grafting product $\triangleright_{\BCK}$]
	For any $\sigma, \tau \in \CT$, the grafting product $\sigma \triangleright_{\BCK} \tau $ amounts to adding an edge connecting the root of $\sigma$ to one vertex of $\tau$ and summing over all vertices of $\tau$.
	\begin{equation} \label{formula_grafting}
		\sigma \triangleright _{\BCK}\tau = \sum_{v \in \CV_\tau} \sigma \triangleright_v \tau ,
	\end{equation}
	where $\CV_\tau$ is the vertex set of $\tau$ and $\sigma \triangleright_v \tau$ adds an edge between the root of $\sigma$ and the vertex $v$.
\end{definition}
\begin{example}[Grafting product $\triangleright_{\BCK}$]
	Here, we give an example for the grafting product. We omit the decorations on the nodes.
	\begin{equs}
		{\color{blue}\bullet}
		\triangleright_{\BCK}
		\begin{forest}
			[[][]]
		\end{forest}
		= 
		\begin{forest}
			[[,fill=blue, edge={blue, thick}][][]]
		\end{forest}
		+2
		\begin{forest}
			[[[,fill=blue, edge={blue, thick}]][]]
		\end{forest}
		,\quad
		{\color{blue}
			\begin{forest}
				[[]]
			\end{forest}
		}
		\triangleright_{\BCK}
		\begin{forest}
			[[][]]
		\end{forest}
		= 
		\begin{forest}
			[
			[ , fill=blue, edge={blue, thick}, for tree={fill=blue, edge={blue, thick}} 
			[ , ] 
			]
			[ , ]
			[ , ]
			]
		\end{forest}
		+2
		\begin{forest}
			[
			[ 	[ , fill=blue, edge={blue, thick}, for tree={fill=blue, edge={blue, thick}} 
			[  ] 
			] ]
			[  ]
			]
		\end{forest}
	\end{equs}
	where the coefficient $2$ appears since the trees are non-planar.
\end{example}

With the product $\star_{\BCK}$, $\CH^*_{\BCK}$ forms the universal envelop of the Lie algebra obtained from the antisymmetrisation of the pre-Lie product $\triangleright_{\BCK}$. Now, we introduce $\star_{\BCK}$ through the Guin-Oudom procedure, which constructs an algebra isomorphic to the Grossman-Larson algebra. Let us first define the simultaneous grafting product $\bstar_{\BCK}$, the reduced version of $\star_{\BCK}$.
\begin{definition}[Simultaneous grafting $\bstar_{\BCK}$]
	For any $\prod_{i=1}^{\odot n} \sigma_i \in \CF$ and $\tau \in \CT$, the simultaneous grafting reads
	\begin{equs}
		\prod_{i=1}^{\odot n } \sigma_i \bstar_\BCK \tau = \sum_{v_1,\ldots, v_{n} \in \CV_\tau} \sigma_1 \triangleright_{v_1} \ldots \sigma_n \triangleright_{v_{n}} \tau
	\end{equs}
	which means choose $n$ vertices of $\tau$ and add an edge connecting the root of $\sigma_i$ to $v_i$. The nodes $v_i$ are not necessarily distinct. Moreover, for $\tau_i \in \CT$
	\begin{equs}
		\sigma \bstar_{\BCK} \prod_{j=1}^{\odot m}  \tau_i = \sum_{i=1}^{n}\sigma \bstar_{\BCK} \tau_i \odot \prod_{j \neq i}^{\odot}  \tau_j
	\end{equs}
	which is analogue to the Leibniz rule.
\end{definition}
\begin{example}[Simultaneous grafting $\bstar_{\BCK}$] \label{eg:simultaneous_grafting} One has
	\begin{align*}
		&
		\left(
		{\color{red} \bullet} 
		\odot 	
		\begin{forest}
			[,fill=blue, edge={blue, thick}, for tree={fill=blue, edge={blue, thick}} []] 
		\end{forest}
		\right)
		\bstar_{\BCK}
		\left(
		\bullet
		\odot
		\begin{forest}
			[[][]]
		\end{forest}
		\right)
		\\=&
		\begin{forest}
			[	[,fill=red, edge={red, thick}]	[,fill=blue, edge={blue, thick}, for tree={fill=blue, edge={blue, thick}} []] ]
		\end{forest}
		\odot
		\begin{forest}
			[[][]]
		\end{forest}
		+
		\begin{forest}
			[	[,fill=red, edge={red, thick}]]
		\end{forest}
		\odot
		\begin{forest}
			[	[,fill=blue, edge={blue, thick}, for tree={fill=blue, edge={blue, thick}} []]  [][]]
		\end{forest}
		+2 \,
		\begin{forest}
			[	[,fill=red, edge={red, thick}]]
		\end{forest}
		\odot
		\begin{forest}
			[ [	[,fill=blue, edge={blue, thick}, for tree={fill=blue, edge={blue, thick}} []]][]]
		\end{forest}
		+
		\begin{forest}
			[[,fill=blue, edge={blue, thick}, for tree={fill=blue, edge={blue, thick}} []] ]
		\end{forest}
		\odot
		\begin{forest}
			[	[,fill=red, edge={red, thick}][][]]
		\end{forest}
		+
		2 \,
		\begin{forest}
			[[,fill=blue, edge={blue, thick}, for tree={fill=blue, edge={blue, thick}} []] ]
		\end{forest}
		\odot
		\begin{forest}
			[	[[,fill=red, edge={red, thick}]][]]
		\end{forest}
		\\+&
		\bullet \odot
		\begin{forest}
			[[,fill=red, edge={red, thick}]	[,fill=blue, edge={blue, thick}, for tree={fill=blue, edge={blue, thick}} []] [][]]
		\end{forest}
		+
		2
		\bullet \odot
		\begin{forest}
			[[,fill=red, edge={red, thick}]	[[,fill=blue, edge={blue, thick}, for tree={fill=blue, edge={blue, thick}} []] ][]]
		\end{forest}
		+
		2
		\bullet \odot
		\begin{forest}
			[[,fill=blue, edge={blue, thick}, for tree={fill=blue, edge={blue, thick}} []]	[ [,fill=red, edge={red, thick}]][]]
		\end{forest}
		+
		2
		\bullet \odot
		\begin{forest}
			[	[ [,fill=red, edge={red, thick}]][[,fill=blue, edge={blue, thick}, for tree={fill=blue, edge={blue, thick}} []]]]
		\end{forest}.
	\end{align*}
\end{example}
Through the Guin-Oudom construction, the $\star_{\BCK}$ product is obtained by partitioning the forest $\prod_{i=1}^{\odot n} \sigma_i$ into two parts and simultaneously grating one part to $\prod_{i=1}^{\odot m} \tau_i$ while putting the other part aside in the forest.
\begin{definition}[$\star_{\BCK}$]\label{def:star_BCK}
	For any $\Cf_1 , \Cf_2 \in \CF$. The product $\star_{\BCK}$ is defined as the following.
	\begin{equs}
	&	\Cf_1 \star_{\BCK}  \mathbf{1} = \mathbf{1} \star_{\BCK}	\Cf_1  =   \Cf_1, \quad \Cf_1  \star_{\BCK} 	\Cf_2 = \mathcal{M}_{\odot} \circ (\id \otimes \cdot \bstar_{\BCK} \Cf_2) \circ \Delta_{\shuffle} \Cf_1,
	\end{equs}
	where $ \mathcal{M}_{\odot} ( \Cf_1 \otimes \Cf_2 ) = \Cf_1 \odot \Cf_2$.
\end{definition}
\begin{example}[ $\star_{\BCK}$]
	\begin{align*}
		&
		\left(
		{\color{red} \bullet} 
		\odot 	
		\begin{forest}
			[,fill=blue, edge={blue, thick}, for tree={fill=blue, edge={blue, thick}} []] 
		\end{forest}
		\right)
		\star_{\BCK}
		\left(
		\bullet
		\odot
		\begin{forest}
			[[][]]
		\end{forest}
		\right)
		\\=&
		{\color{red} \bullet} 
		\odot 
		\left(\begin{forest}
			[,fill=blue, edge={blue, thick}, for tree={fill=blue, edge={blue, thick}} []] 
		\end{forest}
		\bstar_{\BCK} 
		\left(
		\bullet
		\odot
		\begin{forest}
			[[][]]
		\end{forest}
		\right)
		\right)
		+
		\begin{forest}
			[,fill=blue, edge={blue, thick}, for tree={fill=blue, edge={blue, thick}} []] 
		\end{forest}
		\odot
		\left({\color{red} \bullet}
		\bstar_{\BCK} 
		\left(
		\bullet
		\odot
		\begin{forest}
			[[][]]
		\end{forest}
		\right)
		\right)
		+
		\left(
		{\color{red} \bullet} 
		\odot 	
		\begin{forest}
			[,fill=blue, edge={blue, thick}, for tree={fill=blue, edge={blue, thick}} []] 
		\end{forest}
		\right)
		\bstar_{\BCK}
		\left(
		\bullet
		\odot
		\begin{forest}
			[[][]]
		\end{forest}
		\right)
	\end{align*}
	where $	\left(
	{\color{red} \bullet} 
	\odot 	
	\begin{forest}
		[,fill=blue, edge={blue, thick}, for tree={fill=blue, edge={blue, thick}} []] 
	\end{forest}
	\right)
	\bstar_{\BCK}
	\left(
	\bullet
	\odot
	\begin{forest}
		[[][]]
	\end{forest}
	\right)$
	is shown in Example \ref{eg:simultaneous_grafting}.
\end{example}

Since $\CH_{\BCK}$ is based on non-planar trees, we have to introduce the symmetry factors of trees and forests, which play important roles in both the pairing (inner product) and B-series for solutions.
\begin{definition}[Symmetry factor] \label{def:symmetry_factor}
%	The symmetry factor of $\Cf \in \CF$ is defined as the cardinal of its automorphism group
%	\begin{equs}
%		S(\Cf) = |\mathrm{Aut}(\Cf)|.
%	\end{equs}
%	Explicitly, for 
The symmetry factor of a forest $\Cf$ is the number of permutations of vertices of $\Cf$ letting $\Cf$ globally invariant.
 For any $\tau = \CB_+^{\alpha} \left(\prod_{i}^{\odot n_\tau} \tau_i \right)\in \CT$
	\begin{equs}
		S(\tau) = \prod_{j=1}^c  r_j! S(\tau_j)^{r_j}
	\end{equs}
	where $c$ is the number of isomorphic classes among $\tau_i \in \CT$, and $r_j$ is the cardinal of the $j$-th class  in which elements are isomorphic to $\tau_j$. For any $ \Cf = \prod_{i=1}^{\odot \, \mathrm{Card}(\Cf)} \sigma_i\in \mathcal{F}$ with $  \mathrm{Card}(\Cf) $ the number of trees in $\Cf$, one has
	\begin{equs}
		S(\Cf) = \prod_{j=1}^{c_{\Cf}}  r^{(\Cf)}_j! S(\sigma_j)^{r^{(\Cf)}_j},
	\end{equs}
	where  $c_{\Cf}$ is the number of isomorphic classes among $\sigma_i \in \CT$, and $r_j^{(\Cf)}$ is the cardinal of the $j$-th class  in which elements are isomorphic to $\sigma_j$. 
\end{definition}

\begin{example} Below, we compute the symmetry factors for trees where we have assumed that all the nodes decorations are the same.
	\begin{align*}
%		&S\left(
%		\begin{forest}
%			[[][]] 
%		\end{forest}
%		\right) = 2,
%		\quad
		S\left(
		\begin{forest}
			[[[]][]] 
		\end{forest}
		\right) = 1,
		\quad
		S\left(
		\begin{forest}
			s sep=1pt,
			l sep=1pt
			[
				[,s sep=1pt,
				l sep=1pt
					[][][]
				]
				[,s sep=1pt,
				l sep=1pt
					[][][]
				]
			] 
		\end{forest}
		\right) = 2!(3!)^2=72, \quad
%		\\&
		S\left(
		\begin{forest}
			[[][]]
		\end{forest}
		\odot
		\begin{forest}
			[[][]]
		\end{forest}
		\right)
		= 2!\cdot2^2 =8.
%		\\&
%		S\left(
%		\begin{forest}
%			[[][]]
%		\end{forest}
%		\odot
%		\begin{forest}
%			[[[][][]][[][][]]] 
%		\end{forest}
%		\odot
%		\begin{forest}
%			[[[][][]][[][][]]] 
%		\end{forest}
%		\right) = 2\cdot2!(2!3!3!)^2 = 20736
	\end{align*}
\end{example}
The coproduct
	$\Delta_{\BCK}$ and the product $\star_{\BCK}$ are adjoint under the pairing \eqref{eq:pairing}, i.e, for any $h_1,h_2 \in \CH^*_{\BCK}$ and $h \in \CH_{\BCK}$.
	\begin{equation*}
		\langle h_1 \star_{\BCK} h_2, h \rangle = \langle h_1 \otimes h_2, \Delta_{\BCK} h \rangle. 
	\end{equation*}
\begin{remark}\label{remark:pairing}
	There is another pairing in the literature (see \cite{HK15}), which is
	$$(h_1^*, h_2) = \delta_{ h_2}^{h_1}.$$
 One can notice that the duality
	\begin{equs}
		(h_1 \star_{\BCK}  h_2, h) = (h_1 \otimes h_2, \Delta_{\BCK} h)
	\end{equs}
	forces the coefficients resulting from the product $\star_{\BCK}$ to be equal to those from $\Delta_{\BCK}$. 
	Consequently, we lose the relatively natural formula of $\star_{\BCK}$ where the identity defining the grafting \eqref{formula_grafting} has to be changed.
\end{remark}
Due to the definition of the inner product \eqref{eq:pairing}, an element in $\mathbf{X} \in \CH_{\BCK}^*$ has the general form
\begin{equs}
	\mathbf{X} = \sum_{\Cf \in \CF} \frac{\langle \mathbf{X}, \Cf \rangle }{S(\Cf)} \Cf^*.
\end{equs}
Now, to construct Branched rough paths that solve RDEs \eqref{RDE}, we define recursively $\mathbf{X}$ to satisfy simultaneously, for $h, h_1, h_2 \in \CH_{\BCK}$
\begin{equs}\label{eq:H_rough_path}
	\begin{aligned}
	&
	\langle \mathbf{X}_{s,t}, \bullet_\alpha \rangle := \int_{s}^{t} \rd X_r^\alpha =  X_t^\alpha -  X_s^\alpha,
	\\&
		\langle \mathbf{X}_{s,r}, h_1 \odot h_2 \rangle := \langle \mathbf{X}_{s,r}, h_1 \rangle \cdot \langle \mathbf{X}_{s,r}, h_1 \rangle,
	\\&
	\langle \mathbf{X}_{s,t},\CB_+^{\alpha}(h) \rangle := \int_{s}^{t} \langle \mathbf{X}_{s,r}, h \rangle \rd X_r^\alpha.
	\end{aligned}
\end{equs}
Here, the integral is in the rough sense (It\^o, Stratonovich, etc.). One can check that this definition together with $\star_{\BCK}$ satisfy Definition \ref{def:rough path} and form a rough path.

From the expansion \eqref{eq:Taylor} of solutions of RDEs, one notices that there are two main ingredients: Iterated integrals of the signals $X$, and the monomials of derivatives of the vector field $f$. The iterated integrals are represented by branched rough paths through the map $\mathbf{X}$ \eqref{eq:H_rough_path}. Therefore, it is left to find the algebraic expression of the monomials, which are called elementary differentials and are defined as following.
\begin{definition}[Elementary differentials] \label{def:elementary_differential}
	Elementary differentials are maps $\Upsilon_f: \CH_{\BCK}^* \times \mathbb{R}^m \rightarrow \mathbb{R}^m$, which are linear in $\CT^*$,  and have the following expression.
	\begin{equs}
		&\Upsilon_f\left[ \bullet_\alpha \right] (y)= f_\alpha (y),
		\\&
		\Upsilon_f\left[ \CB_{+}^{\alpha}	\left(\prod_{i=1}^{\odot n}  \tau_i\right) \right] (y):= \sum_{b \in [m]^n }  \left(\prod_{i=1}^n \partial_{b_i}f_\alpha (y)\right)  \prod_{i=1}^n\Upsilon_f^{b_i}[ \tau_i](y) ,
	\end{equs}
	where $\Upsilon_f^{b}$ is the $b$-th coordinate of $\Upsilon_f$. Outside the previous cases, the elementary differential is zero.
\end{definition}
%Then, putting the Taylor coefficients together with the $\SH$-rough paths, we have the following-defined $\ST$-Butcher series, which will be proven to be the expansion of solutions of RDEs later in Theorem \ref{thm:B-series_solution}.
%\begin{definition}[$\ST$-Butcher series]
%	$\ST$-Butcher series are maps: $\mathbb{R}^m \times \SH^* \rightarrow \mathbb{R}^m$ defined as 
%	\begin{equs}
%		B_{\!\ST}(y,\mathbf{Z}) = \langle \mathbf{Z}, \mathbf{1}  \rangle y + \sum_{\tau \in \ST} \frac{\Upsilon_f[\tau](y)}{S(\tau)} \langle \mathbf{Z}, \tau \rangle 
%	\end{equs}
%	where  $\mathbf{Z} \in \CH^*(\mu, \Delta)$ is a character which means
%	$\langle \mathbf{Z} , \prod^{(\mu) n}_{i=1} \tau_i\rangle = \prod^{n}_{i=1}\langle \mathbf{Z} , \tau_i \rangle$ 
%	and it satisfies $\langle \mathbf{Z} , \mathbf{1}  \rangle =1$.
%\end{definition}

 According to \cite[Theorem 5.1]{Gub06}, 
	given the signal $X$ and its associated branched rough path $\mathbf{X} \in \CH^*_{\BCK}$,
	the solution of equation \eqref{RDE}
	admits the B-series expression
	\begin{equs}\label{eq:Bseries_commutative}
		Y_t = B_{\CT}(Y_s,\mathbf{X}_{s,t} ) \end{equs}
		where one has for $\mathbf{Z} \in \CH_{\BCK}^*$
	\begin{equs} \label{B_series_BCK}
		 B_{\CT}(y,\mathbf{Z} ) = y +	\sum_{\tau \in \CT} \frac{\Upsilon_f[\tau](y)}{S(\tau)}\langle \mathbf{Z}, \tau \rangle.
	\end{equs}

\section{B-series representation of cocycle-RDE solutions}
\label{Sec::3}
It is shown in \cite{GLMZ25} that Assumption \ref{assumption_fixed_point} together with the smoothness assumption on the vector field $f$ is sufficient to perform a fixed point argument for proving the existence and uniqueness of solutions of RDEs \eqref{RDE}. In the sequel, we call the RDEs driven by rough paths associated to a one-cocycle-Hopf algebra cocycle-RDEs.

In this section, we will show that the solution of cocycle-RDEs admits a B-series representation and the composition law of B-series ensures that their coefficients form controlled rough paths. Notably, according to the universal property of $\CH_{\BCK}$ (Theorem \ref{thm:univBCK}), the one-cocycle condition leads to a morphism $\Lambda: \CH^* \rightarrow \CH_{\BCK}^*$. Therefore, properties of $\CH_{\BCK}$ elementary differentials are passed to those of $\CH$. The algebraic properties recalled in this section correspond to Assumption~\ref{assumption_fixed_point}.

\subsection{1-cocycle Hopf algebra and $\CH_{\BCK}$}
We have seen the need of iterations preserving properties of iterated integrals for solving the RDE \eqref{RDE}. This motivates the properties of the Hopf algebra $\CH$ which provides a sufficient condition in proving the fixed point argument.

Let the graded reduced Hopf algebra $\CH (\mu, \Delta)$  equipped with  
$L_{\alpha}: \CH \rightarrow \CH$,
a family of linear maps indexed by $\alpha \in [d]$,  homogeneous of degree one, satisfying the cocycle condition for $ h \in \CH $
\begin{equs}\label{eq:one_cocycle_bis}
	\Delta L_{\alpha} (h) = (\id \otimes L_{\alpha}) \Delta(h) + L_{\alpha} (h) \otimes \mathbf{1}.
\end{equs} 
%\begin{assumption}\label{assumption}
%	We also assume that $\SH$ has the following two properties.
%	\begin{itemize}
	%		\item  The collection $\{L_\alpha(\mathbf 1),\, \alpha=1,\ldots,d\}$ is a basis of $\SH_1$.
	%		\item  There exist a surjective morphism from $\CH_{\BCK}$ to $\SH$, which sends $\CB^\alpha_+$ to $L_\alpha$ for all $\alpha$.
	%		{\color{red} define $\CB_+$ at the beginning}
	%	\end{itemize} 
%\end{assumption}
By reduced, we mean that $ \CH_0 = \mathbb{R} \one $ where $\one $ is the unit for $\CH$.
Let us recall \cite[Theorem 2]{CK1} on the universal property of $\mathcal{H}_\BCK$.

\begin{theorem}\label{thm:univBCK}
	The Hopf algebra $\mathcal{H}_\BCK$ endowed with the cocycle $\mathcal{B}_+$ is initial in the category of Hopf algebras endowed with a $1$-cocycle.
	In plain words, any Hopf algebra $\CH$ endowed with a $1$-cocycle $L$ admits a unique morphism $\Lambda:\mathcal{H}_\BCK\to\CH$ sending $\mathcal{B}_+$ to $L$. It is naturally generalised to a collection of cocycles $L_{\alpha}$ by decorating the vertices.
\end{theorem}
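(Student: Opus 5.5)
Theorem~\ref{thm:univBCK} is the universal property of $\CH_{\BCK}$ from \cite[Theorem 2]{CK1}. The plan is the standard one: build $\Lambda$ on forests by induction on the number of vertices, then check that it is a Hopf algebra morphism by induction as well, using the cocycle identities on both sides.

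\textbf{Construction.} Since $\CH_{\BCK}$ is the free commutative algebra on decorated rooted trees, an algebra morphism is determined by its values on trees. Every tree is uniquely of the form $\CB^\alpha_+(\Cf)$ with $\Cf$ a forest with fewer vertices. This forces the recursive definition
\begin{equs}
\Lambda(\one)=\one,\qquad \Lambda\big(\CB_+^\alpha(\Cf)\big)=L_\alpha\big(\Lambda(\Cf)\big),\qquad \Lambda(\tau_1\odot\cdots\odot\tau_n)=\mu\big(\Lambda(\tau_1)\otimes\cdots\otimes\Lambda(\tau_n)\big).
\end{equs}
Here $\mu$ is the commutative product of $\CH$. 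Commutativity of $\CH$ is needed for this to be well defined on non-planar forests. I would note that all Hopf algebras $\CH$ considered in this setting (for instance, characters of rough paths) are commutative, and add this as an implicit hypothesis if necessary. Uniqueness follows at once: any algebra morphism intertwining $\CB^\alpha_+$ and $L_\alpha$ must satisfy these same recursive equations.

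\textbf{Coalgebra morphism.} The key step is to show $\Delta\circ\Lambda=(\Lambda\otimes\Lambda)\circ\Delta_{\BCK}$, by induction on the number of vertices. Both sides are algebra morphisms, because $\Delta$ and $\Delta_{\BCK}$ are multiplicative. It therefore suffices to check the identity on trees $\tau=\CB^\alpha_+(\Cf)$. Using the cocycle property of $\CB_+^\alpha$ in $\CH_{\BCK}$, namely $\Delta_{\BCK}\CB^\alpha_+=(\id\otimes\CB^\alpha_+)\Delta_{\BCK}+\CB^\alpha_+\otimes\one$, we get
\begin{equs}
(\Lambda\otimes\Lambda)\Delta_{\BCK}\tau=(\Lambda\otimes L_\alpha\Lambda)\Delta_{\BCK}\Cf+L_\alpha\Lambda(\Cf)\otimes\one .
\end{equs}
By the induction hypothesis applied to $\Cf$, this equals $(\id\otimes L_\alpha)\Delta\Lambda(\Cf)+L_\alpha\Lambda(\Cf)\otimes\one$. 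By \eqref{eq:one_cocycle_bis}, that expression is $\Delta L_\alpha\Lambda(\Cf)=\Delta\Lambda(\tau)$. The convention must match: the cocycle in \eqref{eq:one_cocycle_bis} acts on the right tensor factor, and the same convention has to be used for $\Delta_{\BCK}$.

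\textbf{Counit, antipode, and main obstacle.} The counit is preserved because of the grading: $\Lambda$ maps forests with $n$ vertices into $\CH_n$, since $L_\alpha$ has degree one. The antipode is then preserved automatically, because a bialgebra morphism between connected graded Hopf algebras intertwines the antipodes (by uniqueness of convolution inverses). The decorated version, with one cocycle $L_\alpha$ per label $\alpha\in[d]$, is verbatim with vertex decorations. I expect the only delicate points to be the well-definedness on non-planar forests (commutativity of $\CH$) and the left/right convention of the cocycle. The inductive computation itself is routine.
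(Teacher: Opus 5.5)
Your proof is correct, and it is essentially the standard Connes--Kreimer argument behind \cite[Theorem 2]{CK1}, which the paper cites without reproducing it. You define $\Lambda$ recursively on the free commutative generators, verify $\Delta\circ\Lambda=(\Lambda\otimes\Lambda)\circ\Delta_{\BCK}$ on trees by induction using the two cocycle identities, and obtain the counit and antipode for free.

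Your commutativity caveat is genuinely needed. It is part of the hypotheses in \cite{CK1}, but the paper's ``in plain words'' phrasing drops it. For instance, take planar rooted trees with the grafting cocycle $B_+$ as the target: there is then no algebra morphism at all, since $\Lambda(\bullet\odot\CB_+(\bullet))$ would have to equal both $\bullet\,B_+(\bullet)$ and $B_+(\bullet)\,\bullet$, and these differ in that non-commutative algebra.
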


By Theorem~\ref{thm:univBCK}, we have a unique morphism of Hopf algebra:
$$\Lambda:(\CH_\BCK,\odot,\Delta_\BCK)\to(\CH,\mu,\Delta)$$
such that $\Lambda \circ\CB_+^\alpha=L_\alpha\circ\Lambda$.
This morphism induces a dual morphism:
$$\Lambda^*:(\CH^*,\star,\Delta_\cdot)\to(\CH^*_\BCK,\star_{\BCK},\Delta_\shuffle).$$
The space $\CH$ split as $\CH=\CI\oplus\CK$, with $\CI=\Im(\Lambda)$, and $\CK=\Im(\Lambda)^\perp$ the orthogonal of $\Im(\Lambda)$.
Moreover, $\CI$ is closed by $\mu$, and $(L_\alpha)_{\alpha\in [d]}$, to be more precise, this is actually the closure of $\CH_0$ by $\mu$, and $(L_\alpha)_{\alpha\in [d]}$.
We denote by $\ST$ an orthogonal basis of $\CI$, we naturally identify $\CI$ and $\CI^*$ via this basis.

\begin{proposition} \label{symmetry_H}
	For every $ h \in \ST $, we define the norm $\Vert \cdot \Vert_{\CH}$ by
\begin{equation} \label{symmetry_factor_H}
	\Vert h \Vert_{\CH}^2 := \langle h, h \rangle_{\mathcal{H}}.
\end{equation}
For every $ \mathbf{X} \in \CI^* $, one has
\begin{equation} \label{basis_dual_H}
	\mathbf{X} = \sum_{h \in \ST} \frac{\langle \mathbf{X}, h \rangle_\CH}{\Vert h \Vert_{\CH}^2} h.
\end{equation}
%{\color{red} Should I change $S(h)$ to $\Vert h \Vert_{\CH}$ since the symmetry factor is different from the BCK one?}
%	{\color{magenta} PL: I prefer the notation $\Vert h \Vert_{\CH}^2$ (it's the square of the norm).}
Moreover, one can define the elementary differential of $h \in \ST$ by
\begin{equation} \label{elementary_diff_H}
	\bar{\Upsilon}_f(h) := \Upsilon_f(\Lambda^*(h))
\end{equation}
and the following $\ST$-Butcher series by
\begin{equation} \label{B_series_H}
	B_{\ST}(y,\mathbf{Z} ) := 	B_{\CT}(y,\Lambda^*(\mathbf{Z}) ).
\end{equation}
Then one has
\begin{equation} \label{formula_B_series_H}
	B_{\ST}(y,\mathbf{Z} )  = y +	\sum_{h \in \ST} \frac{\bar{\Upsilon}_f[h](y)}{\Vert h \Vert_{\CH}^2} \langle \mathbf{Z}, h \rangle_\CH.
\end{equation}
%\begin{equs}
%	\label{B_series_H}
%	B_{\SF}(y,\mathbf{Z} ) & = y +	\sum_{h \in \SF} \frac{\hat{\Upsilon}_f[h](y)}{S(h)} \langle \mathbf{Z}, h \rangle_\CH
%	\\ &=  y +	\sum_{\tau \in \CF} \frac{\Upsilon_f[\tau](y)}{S(\tau)}\langle \mathbf{Z}, \Lambda(\tau) \rangle_\CH
%	\\ &=  y +	\sum_{\tau \in \CT} \frac{\Upsilon_f[\tau](y)}{S(\tau)}\langle \mathbf{Z}, \Lambda(\tau) \rangle_\CH
%	\\ & = 	B_{\CT}(y,\Lambda^*(\mathbf{Z}) )
%\end{equs}
\end{proposition}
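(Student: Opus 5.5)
The statement has two parts, and both reduce to linear algebra on the orthogonal basis $\ST$ together with the definition of $\Lambda^*$ as the adjoint of $\Lambda$. I read the pairing between $\CH^*$ and $\CH$ through $\langle\cdot,\cdot\rangle_\CH$, and that between $\CH^*_\BCK$ and $\CH_\BCK$ as in Definition~\ref{dfn:perfBCK}. Adjointness then reads $\langle \Lambda^*(\mathbf{Z}), \Cf\rangle = \langle \mathbf{Z}, \Lambda(\Cf)\rangle_\CH$ for every forest $\Cf$.

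For \eqref{basis_dual_H}, I would start from the identification of $\CI$ with $\CI^*$ through the orthogonal basis $\ST$, and expand $\mathbf{X}=\sum_{h\in\ST} c_h\, h$. Pairing both sides with a fixed $h'\in\ST$ and using orthogonality gives $\langle \mathbf{X},h'\rangle_\CH = c_{h'}\Vert h'\Vert_\CH^2$, which determines $c_{h'}$. Since $\langle\cdot,\cdot\rangle_\CH$ is an inner product, $\Vert h'\Vert_\CH\neq0$, so the division is legitimate.

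For \eqref{formula_B_series_H}, I would substitute \eqref{basis_dual_H} into the definition \eqref{B_series_H}. By linearity of $\Lambda^*$ and of $\Upsilon_f$ in its tree argument,
\begin{equs}
B_{\ST}(y,\mathbf{Z}) = y+\sum_{h\in\ST}\frac{\langle\mathbf{Z},h\rangle_\CH}{\Vert h\Vert_\CH^2}\sum_{\tau\in\CT}\frac{\Upsilon_f[\tau](y)}{S(\tau)}\langle\Lambda^*(h),\tau\rangle .
\end{equs}
The inner sum is $\Upsilon_f$ applied to the tree part of $\Lambda^*(h)$, written in the dual basis $\tau^*/S(\tau)$. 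Since $\Upsilon_f$ vanishes on forests that are not trees (Definition~\ref{def:elementary_differential}), the inner sum equals $\Upsilon_f[\Lambda^*(h)](y)=\bar\Upsilon_f[h](y)$ by \eqref{elementary_diff_H}, which yields the claimed formula.

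Two points need care. The first is the constant term. $B_\CT$ contains $y$ with no factor $\langle\mathbf{Z},\one\rangle$, so the formula holds as stated. Its $\ST$-version must account for $\one\in\ST$ (taking $\one$ in the basis, since $\CH_0=\mathbb{R}\one$). This is harmless because $\Upsilon_f[\one]=0$, so the $h=\one$ term contributes nothing.

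The second point is the main obstacle: justifying that $\Lambda^*$ is well defined on $\CI^*$ and that all the sums are legitimate. The sums are finite in each degree because everything is graded, with $\Lambda$ homogeneous and the $L_\alpha$ of degree one. I would also check that $\langle\mathbf{Z},\Lambda(\tau)\rangle_\CH$ only sees the $\CI$-component of $\mathbf{Z}$, since $\Lambda(\tau)\in\CI$ and $\CK=\CI^\perp$. This is what makes restricting to $\mathbf{Z}\in\CI^*$ and summing over $\ST$ sufficient: the $\CK$-part of a general $\mathbf{Z}$ does not contribute, and no terms are lost.
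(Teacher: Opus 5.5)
Your proposal is correct and follows essentially the same route as the paper. Both expand $\mathbf{Z}$ in the orthogonal basis $\ST$ via \eqref{basis_dual_H}, use linearity of $\Lambda^*$ and of $B_{\CT}(y,\cdot)-y$, and recognise $\sum_{\tau\in\CT}\frac{\Upsilon_f[\tau](y)}{S(\tau)}\langle\Lambda^*(h),\tau\rangle$ as $\Upsilon_f[\Lambda^*(h)](y)=\bar{\Upsilon}_f[h](y)$, since $\Upsilon_f$ vanishes off trees.

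The extra points you check are left implicit in the paper, and all of them are sound:
\begin{itemize}
\item the coefficients in \eqref{basis_dual_H}, obtained by pairing with a basis element;
\item the vanishing of the $h=\mathbf{1}$ term, which holds more precisely because $\Lambda^*(\mathbf{1})$ is a multiple of the empty forest, on which $\Upsilon_f$ is zero;
\item the fact that the $\mathcal{K}$-component of $\mathbf{Z}$ contributes to neither side.
\end{itemize}
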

\begin{proof}
	The map $(B_\ST(y,\cdot)-y)$, and $(B_\CT(y,\cdot)-y)$ are linear, moreover, the former is exactly the elementary differential $\Upsilon_f$ as shown by the formula:
	\begin{equs}
		y+\Upsilon_f(\mathbf{X})  &= y +\sum_{\tau \in \CT} \frac{\Upsilon_f[\tau]}{S(\tau)} \langle \mathbf{X}, \tau \rangle.
		\\ &=B_{\CT}(y,\mathbf{X} ) . 
	\end{equs}
	Let us write $B_\ST(y,\mathbf{Z})-y$ as the composition $(B_\CT(y,\cdot)-y)\circ\Lambda^*$ evaluated at $\mathbf{Z}$.
	In the basis $\ST$, one has
	\begin{equs}
		B_{\ST}(y,\mathbf{Z} )  &= 
%		y+\Upsilon_f(\mathbf{Z}) = y+ \Upsilon_f\left(\sum_{h \in \ST} \frac{\langle \mathbf{Z}, h\rangle_{\CH}}{\Vert h \Vert_{\CH}^2} h\right)
		y+B_{\ST}\left(y,\sum_{h \in \ST} \frac{\langle \mathbf{Z}, h\rangle_{\CH}}{\Vert h \Vert_{\CH}^2} h\right)
		\\  &= 
		y +	\sum_{h \in \ST} \frac{(B_\ST(y,\cdot)-y)(h)}{\Vert h \Vert_{\CH}^2} \langle \mathbf{Z}, h \rangle_\CH.
		\\  &= y +	\sum_{h \in \ST} \frac{(B_\CT(y,\cdot)-y)\circ\Lambda^*(h)}{\Vert h \Vert_{\CH}^2} \langle \mathbf{Z}, h \rangle_\CH.
		\\ &= y +	\sum_{h \in \ST} \frac{\Upsilon_f[\Lambda^*(h)](y)}{\Vert h \Vert_{\CH}^2} \langle \mathbf{Z}, h \rangle_\CH.
		\\ &= y +	\sum_{h \in \ST} \frac{\bar{\Upsilon}_f[h](y)}{\Vert h \Vert_{\CH}^2} \langle \mathbf{Z}, h \rangle_\CH.
	\end{equs}
\end{proof}

\subsection{B-series solutions and controlled rough paths}
The following morphism property of elementary differentials ensures that the composition of Taylor coefficients is algebraically compatible with the composition of rough paths (Chen's relation), which further yields the composition of solutions and guarantees B-series as solutions.
\begin{lemma}\label{lemma:morphism_elementary_differential1}
	Elementary differentials are momorphisms with respect to the $\bstar_{\BCK}$ product, i.e., for any $\sigma_j, \tau \in \CT$
	\begin{equs}
	\Upsilon_f\left[ 	\left(\prod_{j=1}^{\odot n_\sigma}  \sigma_j \right) \bstar_{\BCK} \tau \right] (y)
	= 
	\sum_{b \in [m]^{n_{\sigma}} } 
	\left(
	\prod_{j=1}^{n_\sigma} 
	\Upsilon_f^{b_j}	[\sigma_j](y)
	\right)
	 \prod_{j=1}^{n_\sigma} \partial_{b_j} \Upsilon_f[\tau](y).
	\end{equs}
\end{lemma}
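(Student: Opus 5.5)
We argue by induction on the number of vertices of $\tau$, after first reducing to a single grafted tree. The reduction rests on two facts. The simultaneous grafting $\left(\prod_{j}^{\odot n_\sigma}\sigma_j\right)\bstar_{\BCK}\tau$ is a sum over maps $v:\{1,\dots,n_\sigma\}\to\CV_\tau$. The right-hand side is the $n_\sigma$-th order directional derivative of $\Upsilon_f[\tau]$ in the directions $\Upsilon_f[\sigma_1],\dots,\Upsilon_f[\sigma_{n_\sigma}]$. So the identity says that differentiating $\Upsilon_f[\tau]$ once in direction $\Upsilon_f[\sigma_j]$ corresponds to grafting $\sigma_j$ at some vertex of $\tau$, and that iterated derivatives correspond to grafting the $\sigma_j$ independently. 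The iterated statement is what we prove directly.

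Write $\tau=\CB_+^{\alpha}\left(\prod_{i=1}^{\odot k}\tau_i\right)$ (with $k=0$ giving $\tau=\bullet_\alpha$). Any assignment $v$ splits the index set $\{1,\dots,n_\sigma\}$ into the set $A_0$ of $\sigma_j$ grafted at the root and sets $A_1,\dots,A_k$ of those grafted into the subtree $\tau_i$. This gives the combinatorial identity
\begin{equs}
\left(\prod_{j}^{\odot}\sigma_j\right)\bstar_{\BCK}\tau=\sum_{A_0\sqcup A_1\sqcup\dots\sqcup A_k}\CB_+^{\alpha}\left(\prod_{j\in A_0}^{\odot}\sigma_j\odot\prod_{i=1}^{\odot k}\Big(\big(\textstyle\prod_{j\in A_i}^{\odot}\sigma_j\big)\bstar_{\BCK}\tau_i\Big)\right).
\end{equs}
Here the sum is over ordered set partitions, with empty blocks allowed and an empty grafting acting as the identity. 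This holds at the level of labelled vertex assignments, so no symmetry-factor issues arise: $\bstar_{\BCK}$ is defined as a plain sum over vertices, and $\Upsilon_f$ is evaluated tree by tree.

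We apply $\Upsilon_f$ using Definition~\ref{def:elementary_differential}, which is linear and multilinear in the children. Each summand gives $\partial^{|A_0|+k}f_\alpha$ contracted with $\Upsilon_f[\sigma_j]$ for $j\in A_0$ and with $\Upsilon_f\big[(\prod_{A_i}\sigma_j)\bstar_{\BCK}\tau_i\big]$. By the induction hypothesis applied to the smaller trees $\tau_i$, the latter equals $\prod_{j\in A_i}\big(\Upsilon_f[\sigma_j]\cdot\nabla\big)\Upsilon_f[\tau_i]$.

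We now compute the right-hand side $\prod_j(\Upsilon_f[\sigma_j]\cdot\nabla)\,\Upsilon_f[\tau](y)$ by the generalized Leibniz rule applied to
\begin{equs}
\Upsilon_f[\tau]=\sum_{b}\prod_{i=1}^{k}\partial_{b_i}f_\alpha\,\prod_{i=1}^{k}\Upsilon_f^{b_i}[\tau_i].
\end{equs}
The directions $\Upsilon_f[\sigma_j]$ are evaluated at the fixed point $y$ and are not differentiated, so the operator is a genuine multilinear derivative $D^{n_\sigma}\Upsilon_f[\tau](y)[\Upsilon_f[\sigma_1],\dots]$. The Leibniz expansion distributes the $n_\sigma$ directional derivatives among the factor $\partial^kf_\alpha$ (the set $A_0$) and the factors $\Upsilon_f[\tau_i]$ (the sets $A_i$), summed over all ordered partitions. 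Derivatives landing on $\partial^k f_\alpha$ produce $\partial^{k+|A_0|}f_\alpha$ contracted with $\Upsilon_f[\sigma_j]$, $j\in A_0$, which matches the root-grafted terms above. The base case $\tau=\bullet_\alpha$ is the special case $k=0$, where every $\sigma_j$ goes to the root.

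The main obstacle is matching multiplicities. We must check that $\Upsilon_f$ of a tree whose root has children $\{\sigma_j\}_{A_0}\cup\{\tau_i'\}$ equals the full contraction of $\partial^{|A_0|+k}f_\alpha$, with no extra symmetry factors. This holds because Definition~\ref{def:elementary_differential} sums over all index tuples $b$ without dividing by automorphisms. In addition, the partitions index the $\sigma_j$ as distinguishable, even when some of them are isomorphic, on both sides. With the $\sigma_j$ treated as labelled, the two expansions agree term by term.
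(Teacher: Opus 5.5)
Your proof is correct, but it handles the case $n_\sigma>1$ differently from the paper. The paper proves only $n_\sigma=1$ by induction on $\tau$. It uses the single-grafting decomposition $\sigma\bstar_{\BCK}\tau=\CB_+^{\alpha}\big(\sigma\odot\prod_i\tau_i\big)+\CB_+^{\alpha}\big(\sigma\bstar_{\BCK}\prod_i\tau_i\big)$ together with the ordinary Leibniz rule. This is your decomposition restricted to the partitions where the one index lies in $A_0$ or in a single $A_i$. For $n_\sigma>1$ the paper does not redo the combinatorics. It only appeals to the Guin--Oudom construction, which builds $\bstar_{\BCK}$ from $\triangleright_{\BCK}$ through the recursion $(\sigma\odot B)\bstar_{\BCK}\tau=\sigma\triangleright_{\BCK}(B\bstar_{\BCK}\tau)-(\sigma\bstar_{\BCK}B)\bstar_{\BCK}\tau$. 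Making that step precise means checking that frozen-direction iterated derivatives satisfy the same recursion: the subtracted term accounts for the derivative falling on the $y$-dependent directions $\Upsilon_f[\sigma_j]$, via the $n_\sigma=1$ case.

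You instead prove the multi-grafting identity in a single induction. You match the splitting of vertex assignments $v:\{1,\dots,n_\sigma\}\to\mathcal{V}_\tau$ into ordered set partitions $A_0\sqcup\dots\sqcup A_k$ with the multivariate Leibniz rule for $D^{n_\sigma}$.

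What each route buys:
\begin{itemize}
\item Your route is self-contained and keeps the multiplicities explicit: labelled $\sigma_j$, labelled children $\tau_i$, and no symmetry factors in $\Upsilon_f$. It fully covers the case the paper only sketches.
\item The paper's route needs less combinatorics. It isolates the conceptual point that $\Upsilon_f$ intertwines pre-Lie grafting with directional differentiation, after which the extension to $\bstar_{\BCK}$ is structural.
\end{itemize}

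One small wording point: you announce a reduction ``to a single grafted tree'', but you never reduce to $n_\sigma=1$. You prove the iterated statement directly, which is fine, so that sentence could simply be dropped.
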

\begin{proof}
	We first prove the case when $n_\sigma = 1$ by induction. The proof of the case when $n_\sigma >1 $ follows from the universal property from  the Guin-Oudom construction \cite{Guin1,Guin2} that builds $ \bstar_{\BCK} $ out of the grafting product.
	Let $\tau = \mathcal{B}^{\alpha}_+	\left(\prod_{i=1}^{ \odot n_{\tau}}  \tau_i\right) $
	and suppose that, for any $\tau_i$,
	\begin{equs}
		 \Upsilon \left[ \sigma \bstar_{\BCK}  	  \tau_i \right](y)
		=
		\sum_{b \in [m]}
		\Upsilon_f^b	[\sigma](y)
		\partial_b \Upsilon_f[ \tau_i](y).
	\end{equs}
	 By the definition of elementary differentials,
	\begin{equs}
		\Upsilon_f\left[ \CB_+^{\alpha}	\left(\prod_{i=1}^{ \odot n_{\tau}}  \tau_i\right) \right] (y)
		= 
		\sum_{c \in [m]^{n_\tau}} \left(\prod_{i=1}^{n_{\tau}} \partial_{c_i}f_\alpha (y)\right)  \prod_{i=1}^{n_{\tau}}\Upsilon_f^{c_i}[ \tau_i](y).
	\end{equs}
	Then, the Leibniz rule implies
	\begin{equs}
		\sum_{b \in [m]}
		\Upsilon_f^b	[\sigma] \partial_b \Upsilon_f[\tau]
		&= 
		\sum_{b \in [m]}
		\Upsilon_f^b	[\sigma] (y)
		\partial_b \left(
	\sum_{c \in [m]^{n_\tau}} \left(\prod_{i=1}^{n_{\tau}} \partial_{c_i}f_\alpha (y)\right)  \prod_{i=1}^{n_{\tau}}\Upsilon_f^{c_i}[ \tau_i](y).
		\right)
		\\&=
		\sum_{b \in [m]}
		\sum_{c \in [m]^{n_\tau}}
		\Upsilon_f^b	[\sigma](y)
		 \left(\left(  \partial_b \prod_{i=1}^{n_{\tau}}  \partial_{c_i}f_\alpha (y)\right)  \prod_{i=1}^{n_{\tau}} \Upsilon_f^{c_i}[ \tau_i](y)\right)
		\\& +
		\sum_{b \in [m]}
		\sum_{c \in [m]^{n_\tau}}
		\left(\prod_{i=1}^{n_{\tau}} \partial_{c_i}f_\alpha (y)\right)  
		\Upsilon_f^b	[\sigma](y)
		\partial_b\left(\prod_{i=1}^{n_{\tau}}\Upsilon_f^{c_i}[ \tau_i](y)\right).
	\end{equs}
	Notice, from the definition of elementary differentials, that
	\begin{equs}
		&\sum_{b \in [m]}
		\sum_{\mathbf{c} \in [m]^{n_\tau}}
		\Upsilon_f^b	[\sigma](y)
		\left(\left(  \partial_b \prod_{i=1}^{n_{\tau}}  \partial_{c_i}f_\alpha (y)\right)  \prod_{i=1}^{n_{\tau}} \Upsilon_f^{c_i}[ \tau_i](y)\right)
		\\=&
		\Upsilon_f\left[ \CB_+^{\alpha}\left( \sigma \odot \prod_{i=1}^{\odot n_{\tau}}  \tau_i\right)\right].
	\end{equs}
	Moreover, by the induction hypothesis and by the Leibniz rule on $\partial$ and on $\bstar_{\BCK}$,
	\begin{equs}
	&	\sum_{b \in [m]}
		\sum_{c \in [m]^{n_\tau}}
		\left(\prod_{i=1}^{n_{\tau}} \partial_{c_i}f_\alpha (y)\right)  
		\Upsilon_f^b	[\sigma](y)
		\partial_b\left(\prod_{i=1}^{n_{\tau}}\Upsilon_f^{c_i}[ \tau_i](y)\right)
	\\ & 	= \Upsilon_f \left[ \mathcal{B}_+^{\alpha}\left( \sigma \bstar_{\BCK} 	\prod_{i=1}^{\odot n_{\tau} }  \tau_i\right) \right](y).
	\end{equs}
We conclude from the fact that
\begin{equs}
	\sigma \bstar_{\BCK} \tau = \mathcal{B}_+^{\alpha}\left( \sigma \odot	\prod_{i=1}^{\odot n_{\tau} }  \tau_i\right) + \mathcal{B}_+^{\alpha}\left( \sigma \bstar_{\BCK} 	\prod_{i=1}^{\odot n_{\tau} }  \tau_i\right). 
	\end{equs}
\end{proof}

Following from this morphism property of elementary differentials, B-series have the composition law. This ensures that the ansatz is preserved while composing local solutions to a ``flow", which is crucial in the solution theory and is at the heart of the concept of controlled rough paths introduced in \cite{Gubinelli2004,Gub06}.
\begin{proposition}\label{prop:composition}
	The Butcher-Connes-Kreimer $\CT$-B-series satisfies the following
	composition law.
	\begin{equs}
	  B_{\CT}(B_{\CT}(y,\mathbf{Z^1} ), \mathbf{Z^2})
	  &=
	  B_{\CT}(y,\mathbf{Z^1} \star_{\BCK} \mathbf{Z^2})
	\end{equs}
	for any $\mathbf{Z^1}, \mathbf{Z^2} \in \CH^*_{\BCK}$. 
%	Here, 
%	for $ \mathbf{Z} = \sum_{h \in \SH}\frac{\langle \mathbf{Z}, h \rangle}{S(h)}h^*$, the notation reads
%	\begin{equs}
%	\mathbf{Z}/ S(\cdot) = \sum_{h \in \SH}\frac{\langle \mathbf{Z}, h \rangle}{S(h)^2}h^*.
%	\end{equs}
\end{proposition}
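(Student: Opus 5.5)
We prove the composition law by unwinding the right-hand side through the definition of $\star_{\BCK}$ and comparing it with a multivariate Taylor expansion of the left-hand side. By linearity of $B_{\CT}(y,\cdot)-y$, it suffices to treat the contributions forest by forest. Write $\mathbf{Z}^1 = \sum_{\Cf\in\CF} \frac{\langle \mathbf{Z}^1,\Cf\rangle}{S(\Cf)}\Cf$ and likewise for $\mathbf{Z}^2$. We then track how each pair $(\Cf,\tau)$ contributes.

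\textbf{Left-hand side.} Set $\tilde y = B_{\CT}(y,\mathbf{Z}^1) = y + \delta$ with
\[
\delta = \sum_{\sigma\in\CT}\frac{\Upsilon_f[\sigma](y)}{S(\sigma)}\langle\mathbf{Z}^1,\sigma\rangle .
\]
Expand each $\Upsilon_f[\tau](\tilde y)$ as a formal Taylor series around $y$:
\[
\Upsilon_f[\tau](y+\delta)=\sum_{n\ge0}\frac1{n!}\sum_{b\in[m]^n}\Big(\prod_{j=1}^n\delta^{b_j}\Big)\prod_{j=1}^n\partial_{b_j}\Upsilon_f[\tau](y).
\]
Expanding $\prod_j\delta^{b_j}$ multilinearly gives a sum over $n$-tuples of trees $(\sigma_1,\dots,\sigma_n)$. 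Grouping these tuples into forests $\Cf=\sigma_1\odot\cdots\odot\sigma_n$ turns $\frac{1}{n!}\prod_j \frac{1}{S(\sigma_j)}$ into $\frac{1}{S(\Cf)}$. This is because the number of ordered tuples giving $\Cf$ is $n!/\prod_j r_j!$, and $S(\Cf)=\prod_j r_j!\,S(\sigma_j)^{r_j}$. At this point we need a character assumption, $\langle\mathbf{Z}^1,\Cf\rangle=\prod_j\langle\mathbf{Z}^1,\sigma_j\rangle$. This holds in the intended use, where the $\mathbf{Z}^i$ are rough path increments satisfying \eqref{eq:multiplication_integral}, and I will state it as the standing hypothesis; for general $\mathbf{Z}^i$ one first proves the statement for characters and then reads it as the natural extension. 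Lemma~\ref{lemma:morphism_elementary_differential1} then identifies the inner sum with $\Upsilon_f[\Cf\bstar_{\BCK}\tau](y)$. Hence
\[
B_{\CT}(\tilde y,\mathbf{Z}^2)=y+\delta+\sum_{\tau\in\CT}\sum_{\Cf\in\CF}\frac{\langle\mathbf{Z}^1,\Cf\rangle\langle\mathbf{Z}^2,\tau\rangle}{S(\Cf)S(\tau)}\,\Upsilon_f[\Cf\bstar_{\BCK}\tau](y),
\]
where the term $\Cf=\one$ gives the plain $\mathbf{Z}^2$ contribution.

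\textbf{Right-hand side.} By Definition~\ref{def:star_BCK}, $\Cf\star_{\BCK}\tau'$ is $\sum \Cf^{(1)}\odot(\Cf^{(2)}\bstar_{\BCK}\tau')$ over the unshuffle of $\Cf$. Since $\Upsilon_f$ vanishes on forests with at least two trees, only terms whose result is a single tree survive. For $\Cf\star_{\BCK}\tau'$ with $\tau'$ a forest, these are $\Cf\bstar_{\BCK}\tau$ when $\tau'=\tau$ is a tree, and $\sigma$ itself when $\Cf=\sigma$ is a tree and $\tau'=\one$. Applying $\Upsilon_f$ to $\mathbf{Z}^1\star_{\BCK}\mathbf{Z}^2$ therefore reproduces exactly the three groups of terms above, with the same weights $\frac{1}{S(\Cf)S(\tau)}$. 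This identifies the two sides.

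\textbf{Main obstacle.} The delicate step is the combinatorial bookkeeping of symmetry factors. We must check that summing over ordered tuples with $\frac1{n!}$ matches summing over non-planar forests weighted by $1/S(\Cf)$. This must also be consistent with the coefficients produced in $\bstar_{\BCK}$, where the grafting counts embeddings; for example, the factor $2$ in the examples is produced by grafting, not by the pairing. I would verify this on the basis: expand $\mathbf{Z}^i$ in $\Cf^*$ with weights $1/S$ and use that the pairing \eqref{eq:pairing} makes $\star_{\BCK}$ the adjoint of $\Delta_{\BCK}$, so no extra factors appear. The other point to state carefully is the character assumption on $\mathbf{Z}^1$ used in the Taylor step.
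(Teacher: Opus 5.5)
Your argument is correct and is essentially the paper's proof: you Taylor-expand $\Upsilon_f[\tau]$ at $y$, regroup ordered tuples into forests with weight $1/S(\Cf)$, use the character property of $\mathbf{Z}^1$ with Lemma~\ref{lemma:morphism_elementary_differential1} to obtain $\Upsilon_f[\Cf \bstar_{\BCK} \tau]$, and discard the multi-tree terms of $\star_{\BCK}$ (you are more explicit than the paper on this last step). Keep the character hypothesis as a genuine hypothesis, as the paper also does implicitly, and note that you additionally need $\langle \mathbf{Z}^2, \mathbf{1}\rangle = 1$; drop the suggestion of a ``natural extension'' to arbitrary $\mathbf{Z}^i$, because the identity fails in general: for $\mathbf{Z}^2 = 0$ the left side is $B_{\CT}(y,\mathbf{Z}^1)$ while the right side is $y$.
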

\begin{proof}
For the simplicity in notations, we identify the spaces $\CH_{\BCK}$ and $\CH^*_{\BCK}$ since they are isomorphic.
	By the definition of the $\CT$-Butcher series,
		\begin{equs}		
		B_{\CT}(B_{\CT}(y,\mathbf{Z^1}), \mathbf{Z^2}) 
		=
		B_{\CT}(y,\mathbf{Z^1} ) 
		+ 
		\sum_{\tau \in \CT} \frac{\Upsilon_f[\tau](B_{\CT}(y,\mathbf{Z^1}))}{S(\tau)} 
		\langle \mathbf{Z^2} , \tau \rangle.
	\end{equs}
	The Taylor expansion of the elementary differentials around $y$ yields
	\begin{equs}
		&\Upsilon_f[\tau](B_{\CT}(y,\mathbf{Z^1} )) 
		\\=&
		\sum_{\beta \in \mathbb{N}^m}
		\frac{1}{\beta!}\partial_\beta\Upsilon_f[\tau](y)
		\prod_{i=1}^m
		\left(
		B_{\CT}^i(y,\mathbf{Z^1} )-y^i
		\right)^{\beta_i}
		\\=&
		\sum_{\beta \in \mathbb{N}^m}
		\frac{1}{\beta!}\partial_\beta\Upsilon_f[\tau](y)
		\prod_{i=1}^m
		\left(\sum_{\sigma_i \in \CT} \frac{\Upsilon^i_f[\sigma_i](y)}{S(\sigma_i)} \langle \mathbf{Z^1} , \sigma _i \rangle\right)^{\beta_i}
		\\=&
		\sum_{\beta \in \mathbb{N}^m}
		\frac{1}{\beta!} 	\partial_\beta\Upsilon_f[\tau](y)
		\sum_{\sigma_i^{k_i} \in \CT}
		\prod_{i=1}^m
		\prod_{k_i=1}^{\beta_i}
		\frac{\Upsilon^i_f[\sigma_i^{k_i}](y)}{S(\sigma_i^{k_i})} \langle \mathbf{Z^1}, \sigma_i^{k_i} \rangle
		\\ = & \sum_{\Cf = \prod_{i=1}^{\odot n} \sigma_i^{\odot r_i}} \frac{\prod_{i=1}^n S(\sigma_i)^{r_i}}{S(\Cf)} \sum_{b^i_j \in [m]^{r_i} } 
\prod_{i=1}^n 	\prod_{j=1}^{r_i}	\left(
		\prod_{i=1}^{n} 
		\frac{\Upsilon_f^{b_j}	[\sigma_i](y)}{S(\sigma_i)}
	\langle \mathbf{Z^1}, \sigma_i^{} \rangle	\right)
\\ & 	\prod_{i=1}^n 	\prod_{j=1}^{r_i}	  \partial_{b_j^i} \Upsilon_f[\tau](y)
	\end{equs}
	where $\beta$ is a multi-index with $m$ entries, the multi-index factorial $\beta! = \prod_{i=1}^m \beta_i!$, and $\partial_\beta$ is the the abbreviation of the differential operator $\prod_{i=1}^m\partial_{i}^{\beta_i}$. In the last identity, we assume that the $\sigma_i$ are pairwise disjoint trees and we have used the notation $ \sigma_i^{\odot r_i} = \prod_{j=1}^{\odot r_i} \sigma_i $. Using the fact that $\mathbf{Z}_1$ is a character combined with Lemma \ref{lemma:morphism_elementary_differential1}, one gets
	\begin{equation*}
		\Upsilon_f[\tau](B_{\CT}(y,\mathbf{Z^1} )) = B_{\CT}(y,\mathbf{Z^1} \star_{\BCK} \tau ).
	\end{equation*}
	We have also used the fact that $ \Upsilon_f $ is zero when evaluated on a forest with more than one tree.
	Therefore,
		\begin{equs}		
		B_{\!\ST}(B_{\!\ST}(y,\mathbf{Z^1}), \mathbf{Z^2}) 
	&	=
		B_{\CT}(y,\mathbf{Z^1} ) +
		\sum_{\tau \in \CT} \frac{B_{\CT}(y,\mathbf{Z^1} \star_{\BCK} \tau )}{S(\tau)} \langle \mathbf{Z^2}, \tau \rangle 
	\\ & = 		\sum_{\Cf \in \CF} \frac{B_{\CT}(y,\mathbf{Z^1} \star_{\BCK} \Cf )}{S(\Cf)} \langle \mathbf{Z^2}, \Cf \rangle 
	\\ & =  B_{\CT}(y,\mathbf{Z^1} \star_{\BCK} \mathbf{Z^2}).
	\end{equs}
\end{proof}
\begin{remark}
	The proof above follows the proof of \cite[Theorem 4.6]{BR23} which composes Regularity Structures B-series.
\end{remark}

\begin{corollary} \label{composition_H}
		The $\ST$-Butcher series satisfies the following
	composition law
	\begin{equs}
		B_{\ST}(B_{\ST}(y,\mathbf{Z^1} ), \mathbf{Z^2})
		&=
		B_{\ST}(y,\mathbf{Z^1} \star \mathbf{Z^2})
	\end{equs}
	for any $\mathbf{Z^1}, \mathbf{Z^2} \in \CH^*$. 
	\end{corollary}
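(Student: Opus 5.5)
The plan is to reduce the corollary to Proposition~\ref{prop:composition} by transporting everything through the dual morphism $\Lambda^*$. By definition \eqref{B_series_H}, $B_{\ST}(y,\mathbf{Z}) = B_{\CT}(y,\Lambda^*(\mathbf{Z}))$ for every $\mathbf{Z}\in\CH^*$. We therefore rewrite the left-hand side as
\begin{equs}
B_{\ST}(B_{\ST}(y,\mathbf{Z^1}),\mathbf{Z^2}) = B_{\CT}\big(B_{\CT}(y,\Lambda^*(\mathbf{Z^1})),\Lambda^*(\mathbf{Z^2})\big),
\end{equs}
and then apply Proposition~\ref{prop:composition} with the elements $\Lambda^*(\mathbf{Z^1}),\Lambda^*(\mathbf{Z^2})\in\CH_{\BCK}^*$. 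This yields $B_{\CT}(y,\Lambda^*(\mathbf{Z^1})\star_{\BCK}\Lambda^*(\mathbf{Z^2}))$.

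The key algebraic step is the multiplicativity $\Lambda^*(\mathbf{Z^1}\star\mathbf{Z^2}) = \Lambda^*(\mathbf{Z^1})\star_{\BCK}\Lambda^*(\mathbf{Z^2})$. This holds because $\Lambda:\CH_{\BCK}\to\CH$ is a coalgebra morphism, $\Delta\circ\Lambda = (\Lambda\otimes\Lambda)\circ\Delta_{\BCK}$, as guaranteed by Theorem~\ref{thm:univBCK}. Dualising, for every forest $\Cf$ we get
\begin{equs}
\langle \Lambda^*(\mathbf{Z^1}\star\mathbf{Z^2}),\Cf\rangle = \langle \mathbf{Z^1}\otimes\mathbf{Z^2},(\Lambda\otimes\Lambda)\Delta_{\BCK}\Cf\rangle_{\CH} = \langle \Lambda^*(\mathbf{Z^1})\star_{\BCK}\Lambda^*(\mathbf{Z^2}),\Cf\rangle,
\end{equs}
using that $\star$ is adjoint to $\Delta$ and $\star_{\BCK}$ is adjoint to $\Delta_{\BCK}$ with respect to the pairing \eqref{eq:pairing}. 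Substituting this identity and applying \eqref{B_series_H} once more gives $B_{\ST}(y,\mathbf{Z^1}\star\mathbf{Z^2})$, which is the claim.

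The main obstacle is bookkeeping the pairings. The map $\Lambda^*$ must be understood as the adjoint with respect to $\langle\cdot,\cdot\rangle_\CH$ on one side and the symmetry-factor-weighted pairing \eqref{eq:pairing} on the other, so that both adjointness relations are used consistently. One also has to ensure that Proposition~\ref{prop:composition} applies to $\Lambda^*(\mathbf{Z^1})$, which is used there as a character for $\odot$. This follows because $\Lambda$ is an algebra morphism: if $\mathbf{Z^1}$ is a character of $(\CH,\mu)$, then $\Lambda^*(\mathbf{Z^1})$ is a character of $(\CH_{\BCK},\odot)$. If no character assumption is intended, the statement should be read for characters, which is the situation relevant to rough paths.
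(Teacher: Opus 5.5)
Your proposal is correct and follows the paper's proof step for step. You rewrite via $B_{\ST}(y,\mathbf{Z}) = B_{\CT}(y,\Lambda^*(\mathbf{Z}))$, apply Proposition~\ref{prop:composition} to $\Lambda^*(\mathbf{Z^1}),\Lambda^*(\mathbf{Z^2})$, and conclude with $\Lambda^*(\mathbf{Z^1}\star\mathbf{Z^2}) = \Lambda^*(\mathbf{Z^1})\star_{\BCK}\Lambda^*(\mathbf{Z^2})$. Your dualisation of the coalgebra-morphism property of $\Lambda$ and your caveat are correct details the paper leaves implicit: the statement should be read for characters, since the proof of Proposition~\ref{prop:composition} tacitly uses this, and $\Lambda^*$ preserves characters because $\Lambda$ is an algebra morphism.
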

	\begin{proof} 
		One has from Proposition \ref{symmetry_H}
		\begin{equs}
			B_{\ST}(y,\mathbf{Z^1} )  
%			&=  y +	\sum_{h \in \ST} \frac{\bar{\Upsilon}_f[h](y)}{\Vert h \Vert_\CH^2}\langle\mathbf{\bar{Z}^1}, h \rangle_{\CH}
%			\\ & =  y +	\sum_{\tau \in \CT} \frac{\Upsilon_f[\tau](y)}{S(\tau)}\langle \mathbf{\bar{Z}^1}, \Lambda( \tau) \rangle_{\CH}
%			\\ & 
			= 	B_{\CT}(y,\Lambda^*(\mathbf{Z^1}) ).
		\end{equs}
		Then, from Proposition \ref{prop:composition}, one has
		\begin{equs}
			B_{\CT}(B_{\CT}(y,\Lambda^*(\mathbf{Z^1}) ), \Lambda^*(\mathbf{Z^2}))
			&=
			B_{\CT}(y,\Lambda^*(\mathbf{Z^1}) \star_{\BCK} \Lambda^*( \mathbf{Z^2}))
			\end{equs}
			We conclude from the morphism property of $\Lambda^*$ for the product $\star$ that implies
			\begin{equs}
					B_{\CT}(y,\Lambda^*(\mathbf{Z^1}) \star_{\BCK} \Lambda^*( \mathbf{Z^2})) = B_{\CT}(y,\Lambda^*(\mathbf{Z^1} \star \mathbf{Z^2})) = B_{\ST}(y,\mathbf{Z^1} \star \mathbf{Z^2}).
			\end{equs}
		\end{proof}

It is shown in \cite[Theorem 5.2]{Gub06} that the coefficients of branched rough paths B-series form controlled rough paths. This enables the application of the Sewing Lemma \cite[Lemma 3.1]{HK15} to the solution ansatz, guaranteeing its convergence when composing local solutions to a flow, which was first constructed in \cite{Gubinelli2004} and adapted for branched rough paths in \cite{Gub06}. \cite[Proposition 3.8]{HK15} also shows the equivalence between the unique controlled rough path solution and the B-series solution.
Now we will show that the result of \cite[Theorem 5.2]{Gub06} is actually a consequence of  the composition law of B-series (Proposition \ref{prop:composition}). 
In fact, the proofs of \cite[Theorem 5.2]{Gub06} and \cite[Lemma 3.10]{HK15} correspond to composition of B-series. 

There are two equivalent definitions for controlled rough paths in the literature. The first one is \cite[Definition 8.1]{Gub06}, which is more in the B-series style, and the other one is a definition with 
a more Hopf algebraic flavour \cite[Definition 3.2]{HK15}. Here, we will stick to the latter one, since we will use the main result of  \cite{GLMZ25} which is based on it. 
 Let $\ST_{\le N} := \ST \cap (\bigoplus_{n \le N} \CH_n)$.
%and $\SF_{\le N} := \SF \cap (\bigoplus_{n \le N} \CH_{n})$
\begin{definition}[Controlled rough path]
	\label{control_rough_path}
	Let $\mathbf{X}$ be a $\gamma$-H\"{o}lder $\CH$-rough path, and let $N$ be the largest integer such that $N\gamma \le 1$. An $\mathbf{X}$-controlled rough path is a path $\mathbf{W} : [0, T] \rightarrow \CH$
	satisfying:
	\begin{itemize}
		\item For any $s \le t \in [0,T]$,
		\begin{equs}
			\langle \mathbf{1}, \mathbf{W}_t \rangle
			= W_t = W_s +\sum_{h \in \ST_{\le N-1}} \langle h, \mathbf{W}_s \rangle \langle \mathbf{X}_{s,t}, h \rangle+ R_{s,t}^{N}
		\end{equs}
		where $R_{s,t}^{N} \le C_1 |t-s|^{N\gamma}$ for some constant $C_1$.
		\item For any $h \in \ST_{\le N-1}$,
		\begin{equs}
			\langle h, \mathbf{W}_t \rangle
			=
			\sum_{g \in \ST_{\le N-1}} 
			\langle g, \mathbf{W}_s \rangle
			\langle  \mathbf{X}_{s,t} \star  h/\Vert h \Vert_{\CH}^2,  g \rangle 
			+R_{s,t}^\tau
			,
		\end{equs}
		where the remainder $R_{s,t}^h \le C_2 |t-s|^{(N-|h|)\gamma} $ for some constant $C_2$.
		\item For any $h \in \CH \setminus \ST$, 
		\begin{equs}
		\langle h, \mathbf{W} \rangle= 0.
		\end{equs}
	\end{itemize}
\end{definition}
\begin{remark}
Here we have a $\Vert h \Vert_{\CH}^2$ difference from \cite[Definition 3.2]{HK15} and \cite[Definition 5.4]{GLMZ25} due to the different definition of the inner product (see Remark \ref{remark:pairing} for details).
%In \cite[Definition 5.4]{GLMZ25}, the controlled rough path is defined for every $h \in \CH_{\BCK}$ but then later in \cite[Sec. 5.3]{GLMZ25} they restrict it to $\CF$. We therefore set up the definition in this case. 
\end{remark}

\begin{corollary}\label{corollary:controlled_RP}
	The elementary differentials of $\ST$-Butcher series form a controlled rough path $\mathbf{Y}: [0,T] \rightarrow \ST$ through
	\begin{equs}
		\langle h , \mathbf{Y}_t \rangle :=\frac{\bar{\Upsilon}_f[h](Y_t)}{\Vert h \Vert_{\CH}^2}
	\end{equs} 
	with $ h \in \ST^* $ and 
	\begin{equs}\label{eq:Y_t}
		\langle  \mathbf{1}, \mathbf{Y}_t \rangle
		:= Y_t =Y_s + \sum_{h \in \ST_{\le N-1}} \langle h, \mathbf{Y}_s \rangle \langle \mathbf{X}_{s,t}, h \rangle + R_{s,t}^{N}.
	\end{equs}
\end{corollary}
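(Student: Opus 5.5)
The plan is to derive both bullets of Definition~\ref{control_rough_path} from the composition law of Corollary~\ref{composition_H} together with Chen's relation \eqref{eq:Chen}. The third bullet, $\langle h,\mathbf{Y}\rangle=0$ for $h\in\CH\setminus\ST$, holds by construction, since $\mathbf{Y}$ is only defined on $\ST$.

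\textbf{First bullet.} Chen's relation and Corollary~\ref{composition_H} give $B_{\ST}(Y_0,\mathbf{X}_{0,t}) = B_{\ST}(B_{\ST}(Y_0,\mathbf{X}_{0,s}),\mathbf{X}_{s,t})$. Hence the $\ST$-B-series solution satisfies $Y_t=B_{\ST}(Y_s,\mathbf{X}_{s,t})$. Formula \eqref{formula_B_series_H} then gives
\[
Y_t=Y_s+\sum_{h\in\ST}\frac{\bar\Upsilon_f[h](Y_s)}{\Vert h\Vert_\CH^2}\langle\mathbf{X}_{s,t},h\rangle .
\]
We split this sum into $|h|\le N-1$ and $|h|\ge N$. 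The first part is exactly $\sum_{h\in\ST_{\le N-1}}\langle h,\mathbf{Y}_s\rangle\langle\mathbf{X}_{s,t},h\rangle$. The second part is $R^N_{s,t}$. By the third condition of Definition~\ref{def:rough path}, each of its terms is $O(|t-s|^{\gamma|h|})\le O(|t-s|^{N\gamma})$ for $|t-s|\le 1$. Since the series is formal, this is understood in the truncated sense, as in \cite{Gub06,HK15}: the remainder beyond level $N-1$ is bounded via the boundedness of $f$ and its derivatives. I will not redo that analytic estimate and will simply cite it.

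\textbf{Second bullet.} This is the main step. Fix $h\in\ST_{\le N-1}$. I apply Corollary~\ref{composition_H} with $\mathbf{Z}^1=\mathbf{X}_{s,t}$ and $\mathbf{Z}^2=\varepsilon h/\Vert h\Vert_\CH^2$, and extract the coefficient of $\varepsilon$. Equivalently, I use the identity from the proof of Proposition~\ref{prop:composition}, $\Upsilon_f[\tau](B_\CT(y,\mathbf{Z}))=B_\CT(y,\mathbf{Z}\star_{\BCK}\tau)-y$ (the reduced part), transported through $\Lambda^*$:
\[
\bar\Upsilon_f[h](Y_t)=\bar\Upsilon_f[h](B_{\ST}(Y_s,\mathbf{X}_{s,t}))=\big(B_\ST(Y_s,\mathbf{X}_{s,t}\star h)-Y_s\big).
\]
Transporting requires the morphism property $\Lambda^*(\mathbf{X}\star h)=\Lambda^*(\mathbf{X})\star_{\BCK}\Lambda^*(h)$, which is the dual of $\Lambda$ being a Hopf morphism, exactly as in the proof of Corollary~\ref{composition_H}. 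Expanding the right-hand side in the basis $\ST$ via \eqref{formula_B_series_H} and dividing by $\Vert h\Vert_\CH^2$ gives
\[
\langle h,\mathbf{Y}_t\rangle=\sum_{g\in\ST}\langle g,\mathbf{Y}_s\rangle\langle\mathbf{X}_{s,t}\star h/\Vert h\Vert^2_\CH,g\rangle .
\]
I then split into $|g|\le N-1$ and $|g|\ge N$. Since $\star$ is graded dual to the graded $\Delta$, the pairing $\langle \mathbf{X}_{s,t}\star h,g\rangle$ only involves the components of $\mathbf{X}_{s,t}$ in degree $|g|-|h|$. Each tail term is therefore $O(|t-s|^{(|g|-|h|)\gamma})$ with $|g|-|h|\ge N-|h|$, which gives $R^h_{s,t}=O(|t-s|^{(N-|h|)\gamma})$. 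As in the first bullet, the tail is controlled by the same truncation argument.

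\textbf{Main obstacle.} The subtle point is justifying the step $\bar\Upsilon_f[h](B_\ST(y,\mathbf{Z}))$ to $B_\ST(y,\mathbf{Z}\star h)$ for $h$ in $\ST$, which need not be a single tree under $\Lambda^*$. Linearity in $h$ reduces this to trees, where the identity holds from the proof of Proposition~\ref{prop:composition}. Forest components of $\Lambda^*(h)$ contribute zero to $\Upsilon_f$ on both sides, because $\Upsilon_f$ vanishes on forests with more than one tree.
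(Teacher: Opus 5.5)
Your proposal is correct and follows essentially the same route as the paper. Both apply the composition law of Corollary~\ref{composition_H} with $\mathbf{Z}^1=\mathbf{X}_{s,t}$, read off the part linear in the second argument, and bound the tail using the gradedness of $\star$ together with the H\"older bounds on $\mathbf{X}$; the paper matches coefficients of $\langle \mathbf{Z}, h\rangle$ for a general $\mathbf{Z}$, whereas you take $\mathbf{Z}=\varepsilon h/\Vert h\Vert_{\CH}^2$, and you are slightly more careful in subtracting the constant in $\Upsilon_f[\tau](B_\CT(y,\mathbf{Z}))=B_\CT(y,\mathbf{Z}\star_{\BCK}\tau)-y$ and in deriving the first bullet from Chen's relation rather than assuming it.
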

\begin{proof}
By Corollary \ref{composition_H}
\begin{equs}
	B_{\!\ST}(Y_t, \mathbf{Z}) =	B_{\!\ST}(B_{\!\ST}(Y_s, \mathbf{X}_{s,t}), \mathbf{Z})
	=
	B_{\!\ST}(Y_s, \mathbf{X}_{s,t} \star \mathbf{Z}),
\end{equs}
which implies that 
	\begin{equs}
&	\sum_{h \in \ST_{\le N-1}}	
\langle h, \mathbf{Y}_t\rangle 
\langle \mathbf{Z}, h \rangle 
		\\=& 
		\sum_{g \in \ST_{\le N-1}} \langle g, \mathbf{Y}_s \rangle \langle \mathbf{X}_{s,t} \star \mathbf{Z}, g \rangle 
		+R_{s,t}^{N}
%		\\&=
%		\sum_{\sigma \in \ST} \mathbf{Y}_s^\sigma \langle \mathbf{X}_{s,t} \otimes \mathbf{Z}, \Delta \sigma \rangle 
		\\=&
		\sum_{g \in \ST_{\le N-1}} 
		\sum_{k\in \ST_{\le N-|h|-1}}
		\sum_{h\in \ST_{\le N-1}}
		 \langle g, \mathbf{Y}_s \rangle 
		 \langle k \star h,  g \rangle  \frac{\langle \mathbf{X}_{s,t}, k \rangle}{\Vert k \Vert_{\CH}^2} \frac{\langle \mathbf{Z}, h \rangle}{\Vert h \Vert_{\CH}^2}+R_{s,t}^{N}.
	\end{equs}
Thus, let us match the coefficients in front of $\langle \mathbf{Z}, h \rangle$ and get
\begin{equs}
	\langle   h, \mathbf{Y}_t \rangle 
	&= \sum_{g \in \ST_{\le N-1}} 
	\sum_{k\in \ST_{\le N-|h|-1}}
	 \langle g, \mathbf{Y}_s \rangle 
	\langle k \star h,  g \rangle  
	\frac{\langle \mathbf{X}_{s,t}, k \rangle}{\Vert k \Vert_{\CH}^2\Vert h \Vert_{\CH}^2}+ R_{s,t}^{h}
%	\\&=
%	\sum_{\sigma \in \ST} \mathbf{Y}_s^\sigma \langle  \mathbf{X}_{s,t} \otimes  \tau , \Delta \sigma \rangle   \frac{1}{S(\tau)}
		\\&=
	\sum_{g \in \ST_{\le N-1}} 
	 \langle g, \mathbf{Y}_s \rangle 
	 \langle  \mathbf{X}_{s,t} \star  h/ \Vert h \Vert_{\CH}^2 ,  g \rangle + R_{s,t}^{h},
\end{equs}
where $R_{s,t}^h \le C_2 |t-s|^{(N-|h|)\gamma} $ for some constant $C_2$. Indeed, in the first line, we sum up $k \in \ST_{\le N-|h|-1}$, and thus the iterated integrals $\langle \mathbf{X}_{s,t}, k \rangle$ are $N-|h|-1$-H\"older continuous for all $k$. In fact, the first line 
 recovers the conditions in \cite[Definition 8.1]{Gub06} of controlled rough paths.
\end{proof}
\section{log-ODE solution theory of cocycle-RDEs}
\label{Sec::4}
In the log-ODE method, one needs the composition of elementary differentials with smooth functions. Therefore, we introduce the following definition. 
 \begin{definition}[elementary vector fields]
	For any $\tau \in \CT$ and any $y \in \mathbb{R}^m$, we define the elementary vector field	$\Upsilon_f[\tau] \{ \cdot \} (y)\colon  \CC^\infty(\mathbb{R}^m,\mathbb{R}^m) \to \CC^\infty(\mathbb{R}^m,\mathbb{R}^m)$ as the functional
	\begin{equs}\label{identification_1}
		\Upsilon_f[ \tau] \{\psi\}(y) := \sum_{b =1}^m \Upsilon_f^{b}[\tau](y)  \partial_b \psi(y).
	\end{equs}
	For generic element $\prod_{i=1}^{\odot n} \tau_i \in \CF$ we set
	\begin{equs}\label{identification_2}
		&\Upsilon_f[\mathbf{1} ] \{\psi\}(y) := \psi(y),
		\\&
		\Upsilon_f[\prod_{i=1}^{\odot n} \tau_i]  \{\psi\}(y) := \sum_{b_1,\ldots,b_n =1}^m \prod_{i=1}^n\Upsilon^{b_i}_f[ \tau_i](y)  \prod_{i=1}^{n}\partial_{b_i}\psi(y)\,.
	\end{equs}
	For $h  \in \CH$, we set
	\begin{equs}\label{identification_1_H}
		\bar{\Upsilon}_f[ h] \{\psi\}(y) := 	\Upsilon_f[ \Lambda^*(h)] \{\psi\}(y).
	\end{equs}
\end{definition}
One can immediately remark that the identity \eqref{identification_1} becomes  elementary differentials in Definition \eqref{def:elementary_differential} when $\psi = I_m y$, where $I_m$ is the $m$-dimensional identity matrix.

One can observe that the elementary differential has the following morphism property, which connects the expansions of solutions to the combinatorial objects. 
\begin{proposition} \label{morphism_element}
	The elementary differential is a homomorphism when one defines the product between elementary differentials as the composition, which means for any $u, v \in \CH^*_{\BCK}$ and $y \in \mathbb{R}$
	\begin{equs}
		\Upsilon_f[u \star_{\BCK} v]\{\psi\} (y)
		= 
		\Upsilon_f[u]\{\cdot\} (y)  \circ \Upsilon_f[v]\{\psi\} (y)
	\end{equs}
	where $\circ$ stands for the composition of elementary vector fields in the second input, i.e., $\Upsilon_f[u]\{\cdot\} (y)  \circ \Upsilon_f[v]\{\psi\} (y) = \Upsilon_f[u]  \left \{\Upsilon_f[v]\{\psi\}\right\}(y)$.
\end{proposition}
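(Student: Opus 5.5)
We reduce to forests and then argue by the product structure of $\star_{\BCK}$. Both sides are bilinear in $(u,v)$, so it suffices to take $u=\Cf_1=\prod_{j=1}^{\odot p}\sigma_j$ and $v=\Cf_2=\prod_{i=1}^{\odot n}\tau_i$ forests. The cases $u=\mathbf{1}$ or $v=\mathbf{1}$ are immediate from $\Upsilon_f[\mathbf{1}]\{\psi\}=\psi$ and Definition~\ref{def:star_BCK}.

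The right-hand side is computed by viewing $\Upsilon_f[\Cf_1]\{\cdot\}$ as the differential operator $\sum_{b}\prod_j\Upsilon_f^{b_j}[\sigma_j]\,\partial_{b_1}\cdots\partial_{b_p}$. We apply it to the function
\begin{equs}
	y\mapsto\sum_{c}\prod_{i=1}^n\Upsilon_f^{c_i}[\tau_i](y)\,\prod_{i=1}^n\partial_{c_i}\psi(y).
\end{equs}
By the generalised Leibniz rule, the $p$ derivatives $\partial_{b_1},\dots,\partial_{b_p}$ distribute over the $n+1$ factors, namely the $n$ factors $\Upsilon_f[\tau_i]$ and the factor $\prod_i\partial_{c_i}\psi$. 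This yields a sum over maps $\{1,\dots,p\}\to\{0,1,\dots,n\}$, where label $0$ means that $\partial_{b_j}$ hits $\psi$. Grouping the derivatives that hit a fixed $\tau_i$, Lemma~\ref{lemma:morphism_elementary_differential1} identifies
\begin{equs}
	\sum_{b}\prod_{j\in J_i}\Upsilon_f^{b_j}[\sigma_j]\prod_{j\in J_i}\partial_{b_j}\Upsilon_f[\tau_i]=\Upsilon_f\Bigl[\bigl(\textstyle\prod_{j\in J_i}^{\odot}\sigma_j\bigr)\bstar_{\BCK}\tau_i\Bigr].
\end{equs}
The derivatives that hit $\psi$, indexed by $J_0$, combine with the $\partial_{c_i}$ into a single $(|J_0|+n)$-th derivative of $\psi$, contracted with the vector fields $\Upsilon_f[\sigma_j]$, $j\in J_0$, and with the modified $\Upsilon_f[\cdot\bstar_{\BCK}\tau_i]$. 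By \eqref{identification_2}, this is exactly $\Upsilon_f$ of the forest $\prod_{j\in J_0}\sigma_j\odot\prod_i(\ldots\bstar_{\BCK}\tau_i)$ evaluated at $\{\psi\}$.

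The left-hand side is treated combinatorially. By Definition~\ref{def:star_BCK}, $\Cf_1\star_{\BCK}\Cf_2$ is the sum over subsets $J_0\subset\{1,\dots,p\}$, the part kept aside coming from $\Delta_{\shuffle}$, of $\prod_{J_0}\sigma_j\odot\bigl(\prod_{j\notin J_0}\sigma_j\bstar_{\BCK}\Cf_2\bigr)$. The simultaneous grafting onto a forest, iterating the Leibniz rule in the definition of $\bstar_{\BCK}$, is a sum over assignments of each remaining $\sigma_j$ to a tree $\tau_i$, with simultaneous grafting onto each $\tau_i$. The indexing sets therefore coincide with the maps $\{1,\dots,p\}\to\{0,\dots,n\}$ above. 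Summing, the two expansions agree term by term, and the result extends by linearity.

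The main obstacle is bookkeeping of multiplicities: we must check that this term-by-term identification is genuinely a bijection. One issue is that $\Delta_{\shuffle}$ of a forest with repeated trees produces binomial coefficients. We would handle this by labelling the trees $\sigma_j$ as distinguishable, so that $\Delta_{\shuffle}$ becomes a sum over subsets without coefficients, and likewise for the $\tau_i$; both sides are then sums over the same set of labelled assignments. A further point we would check is the extension of Lemma~\ref{lemma:morphism_elementary_differential1} to several $\sigma_j$ grafted onto a single tree, which is the statement of that lemma as given. We would also check that $\Upsilon_f[\cdot]\{\psi\}$ on forests is symmetric in the trees, so that labelling is harmless.
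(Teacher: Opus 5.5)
Your proposal is correct and follows essentially the same route as the paper. The paper expands the left side via $\Delta_\shuffle$ into a sum over $K\sqcup L$, distributes $L$ over the trees of $v$ as $L_1\sqcup\cdots\sqcup L_{n_v}$, expands the right side by the Leibniz rule, and matches the groups through Lemma~\ref{lemma:morphism_elementary_differential1}. Your explicit labelling of the trees to control multiplicities makes precise a point the paper leaves implicit in those sums.
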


\begin{proof}
	Let us write $u = \prod_{j=1}^{ \odot n_u} \sigma_j$ and $v = \prod_{i=1}^{\odot n_v} \tau_i$ where $\sigma_j,\tau_i \in \CT$. By the definition of $\star$,
	\begin{equs}
		&\Upsilon_f[u \star_{\BCK} v]\{\psi\} (y)
		\\=& \Upsilon_f\left[(\Delta_\shuffle u) (\id \otimes \bstar_{\BCK} v)\right]\{\psi\} (y)
			\\=&
		\sum_{K \sqcup L = \{1,\ldots,n_u\}} \Upsilon_f[\mu\left(\prod_{k\in K}^{\odot}\sigma_k , 
		\prod_{l\in L}^{\odot}\sigma_l\bstar_{\BCK} v\right)]\{\psi\} (y)
		\\=&
		\sum_{K \sqcup L = \{1,\ldots,n_u\}}
		 \sum_{\{b_k\}_{k\in K} =1}^m  
		  \sum_{c_1,\ldots,c_{n_v} =1}^m 
		\sum_{L_1\sqcup\ldots \sqcup L_{n_v} =L}
		\\&
		\prod_{k \in K}\Upsilon^{b_k}_f[ \sigma_k](y)  
		 \prod_{i=1}^{n_v}
		\Upsilon_f^{c_i}[ \prod_{l \in L_i}^{\odot} \sigma_l \bstar_{\BCK}\tau_i](y)
		\prod_{i=1}^{n_v}\partial_{c_i}
		\prod_{k \in K}\partial_{b_k}\psi(y).
	\end{equs}
	On the other hand, by the definition of elementary vector fields and the Leibniz rule
	\begin{equs}
		 &\Upsilon_f[u]  \left \{\Upsilon_f[v]\{\psi\}\right\}(y) 
		\\ =&
		 \sum_{b_1,\ldots,b_{n_u} =1}^m \prod_{j=1}^{n_u}\Upsilon^{b_j}_f[ \sigma_j](y)  \prod_{j=1}^{n_u}\partial_{b_j}\Upsilon_f[v]\{\psi\}(y)
		 \\ =&
		  \sum_{b_1,\ldots,b_{n_u} =1}^m 
		   \sum_{c_1,\ldots,c_{n_v} =1}^m 
		  \sum_{K \sqcup L = \{1,\ldots,n_u\}}
		  \\&
		  \prod_{j=1}^{n_u}\Upsilon^{b_j}_f[ \sigma_j](y) 
		  \left(
		  \prod_{l \in L}\partial_{b_l}
		  \prod_{i=1}^{n_v}
		  \Upsilon_f^{c_i}[\tau_i](y)
		  \right)
		  \prod_{k \in K}\partial_{b_k}
		 \prod_{i=1}^{n_v}\partial_{c_i}
		 \psi(y).
	\end{equs}
	
	According to Lemma \ref{lemma:morphism_elementary_differential1}
	\begin{equs}
		& \sum_{\{b_l\}_{l\in L} =1}^m   
		 \prod_{l \in L}\Upsilon^{b_l}_f[ \sigma_l](y) 
		 \left(
		 \prod_{l \in L}\partial_{b_l}
		 \prod_{i=1}^{n_v}
		 \Upsilon_f^{c_i}[\tau_i](y)
		 \right) 
		 \\=&
		  \sum_{\{b_l\}_{l\in L} =1}^m   
		  \sum_{L_1\sqcup\ldots \sqcup L_{n_v} =L}
		  \prod_{i=1}^{n_v}
		   \left(
		 \prod_{l \in L_i}\Upsilon^{b_l}_f[ \sigma_l](y) 
		 \prod_{l \in L_i}\partial_{b_l}
		 \Upsilon_f^{c_i}[\tau_i](y)
		 \right) 
		 \\=&
		 \sum_{L_1\sqcup\ldots \sqcup L_{n_v} =L}
		 \prod_{i=1}^{n_v}
		 \Upsilon_f^{c_i}[ \prod_{l \in L_i}^{\odot} \sigma_l \bstar_{\BCK} \tau_i](y).
	\end{equs}
	Finally, we have
	\begin{equs}
	&\Upsilon_f[u]  \left \{\Upsilon_f[v]\{\psi\}\right\}(y) 
	\\ =&
	\sum_{c_1,\ldots,c_{n_v} =1}^m 
	\sum_{K \sqcup L = \{1,\ldots,n_u\}}
	 \sum_{\{b_k\}_{k\in K} =1}^m  
	  \sum_{L_1\sqcup\ldots \sqcup L_{n_v} =L}
	  \\&
	  \prod_{k \in K}\Upsilon^{b_k}_f[ \sigma_k](y) 
	 \prod_{i=1}^{n_v}
	 \left(
	 \Upsilon_f^{c_i}[ \prod_{l \in L_i}^{\odot} \sigma_l \bstar_{\BCK}\tau_i](y) 
	 \right) 
	 \left(
	 \prod_{i=1}^{n_v}\partial_{c_i}
	 \prod_{k \in K}\partial_{b_k}
	 \psi(y)
	 \right)
%	 	\\ =&
%	 \sum_{K \sqcup L = \{1,\ldots,n_u\}}
%	 \sum_{\{b_k\}_{k\in K} =1}^m  
%	 \prod_{k \in K}\Upsilon^{b_k}_f[ \sigma_k](y) 
%	 \Upsilon_f[ \prod_{l \in L_i}^{(\mu)} \sigma_l \bar{\star}v]
%	 \left\{
%	 \prod_{k \in K}\partial_{b_k}
%	 \psi
%	  \right\}
%	  (y) 
%	 	\\=&
%	 \sum_{K \sqcup L = \{1,\ldots,n_u\}} \Upsilon_f[\mu\left(\prod_{k\in K}^{(\mu)}\sigma_k , 
%	 \prod_{l\in L}^{(\mu)}\sigma_l\bar{\star} v\right)]\{\psi\} (y)
	 \\=&
	 \Upsilon_f[u \star_{\BCK} v]\{\psi\} (y).
	\end{equs}
%	where we used Lemma \ref{lemma:morphism_elementary_differential} and the definition of elementary vector fields.
\end{proof}

\begin{proposition}\label{prop:key_indentity_2}
		For any $u \in \CH^*_{\BCK}$, $y \in \mathbb{R}^m$ and $\phi, \psi \in \CC^\infty$,  we have 
	\begin{equs}\label{leibniz_id}
		\Upsilon_f[ \Delta_\shuffle u] \{\phi \otimes \psi\}(y) = \Upsilon_f[u] \{\phi \psi\}(y)
	\end{equs}
	where for any 
	$u,v \in \CH^*_{\BCK}$, $$\Upsilon_f[u \otimes v] \{\phi \otimes \psi\}(y) := \Upsilon_f[u ] \{\phi \}(y)\Upsilon_f[ v] \{ \psi\}(y).$$
\end{proposition}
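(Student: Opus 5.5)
By linearity in $u$, it suffices to prove \eqref{leibniz_id} for a single forest $u = \prod_{i=1}^{\odot n} \tau_i \in \CF$ with $\tau_i \in \CT$. The case $u = \mathbf{1}$ is immediate: $\Delta_\shuffle \mathbf{1} = \mathbf{1}\otimes \mathbf{1}$, and $\Upsilon_f[\mathbf{1}]\{\phi\}\Upsilon_f[\mathbf{1}]\{\psi\} = \phi\psi$. For general $u$ the plan is to compute both sides explicitly from the definition \eqref{identification_2} of the elementary vector fields and match them term by term.

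\textbf{Left-hand side.} Since trees are primitive for $\Delta_\shuffle$ and $\Delta_\shuffle$ is multiplicative for $\odot$, we have
\begin{equs}
\Delta_\shuffle u = \sum_{K \sqcup L = \{1,\ldots,n\}} \prod_{k\in K}^{\odot} \tau_k \otimes \prod_{l \in L}^{\odot}\tau_l ,
\end{equs}
where the sum runs over ordered partitions of the index set. If some $\tau_i$ coincide, we keep the labelled indices $i$ rather than isomorphism classes, so the multiplicities appear automatically as repeated terms. Applying the definition of $\Upsilon_f[u\otimes v]\{\phi\otimes\psi\}$, the left-hand side becomes
\begin{equs}
\sum_{K\sqcup L}\sum_{b\in[m]^n} \prod_{i=1}^n \Upsilon_f^{b_i}[\tau_i](y) \Big(\prod_{k\in K}\partial_{b_k}\phi(y)\Big)\Big(\prod_{l\in L}\partial_{b_l}\psi(y)\Big).
\end{equs}

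\textbf{Right-hand side.} By \eqref{identification_2},
\begin{equs}
\Upsilon_f[u]\{\phi\psi\}(y) = \sum_{b}\prod_i \Upsilon_f^{b_i}[\tau_i](y)\, \prod_{i=1}^n\partial_{b_i}(\phi\psi)(y).
\end{equs}
The general Leibniz rule for the mixed derivative $\prod_{i=1}^n \partial_{b_i}$ applied to the product $\phi\psi$ gives
\begin{equs}
\prod_{i=1}^n \partial_{b_i}(\phi\psi) = \sum_{K\sqcup L}\prod_{k\in K}\partial_{b_k}\phi\prod_{l\in L}\partial_{b_l}\psi .
\end{equs}
This rule is proved by induction on $n$: each derivative falls either on $\phi$ or on $\psi$. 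Substituting this expansion shows that the right-hand side coincides with the expression obtained for the left-hand side.

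\textbf{Main obstacle.} The only delicate point is bookkeeping. We must make sure that the unshuffle coproduct on a forest with repeated trees produces exactly the labelled-subset sum above, with multiplicities rather than a sum over distinct subforests. This follows from the multiplicativity $\Delta_\shuffle(\tau\odot\sigma) = \Delta_\shuffle\tau\odot\Delta_\shuffle\sigma$, expanding the product of the $n$ factors $(\mathbf{1}\otimes\tau_i + \tau_i\otimes\mathbf{1})$. This labelled-index convention is the same one used in the proof of Proposition~\ref{morphism_element}. A vector-valued $\psi$ is handled componentwise.
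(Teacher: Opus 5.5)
Your proof is correct, but it is organised differently from the paper's. The paper verifies \eqref{leibniz_id} only for a single tree $\tau$, which is primitive for $\Delta_\shuffle$. There the identity is just the first-order Leibniz rule $\sum_b \Upsilon_f^b[\tau]\,\partial_b(\phi\psi)=\psi\,\Upsilon_f[\tau]\{\phi\}+\phi\,\Upsilon_f[\tau]\{\psi\}$. The paper then asserts that the forest case follows by an induction on the number of trees, because primitives ``generate''.

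You instead treat an arbitrary forest $\tau_1\odot\cdots\odot\tau_n$ at once. You expand $\Delta_\shuffle$ as $\prod_i(\tau_i\otimes\mathbf{1}+\mathbf{1}\otimes\tau_i)$, i.e.\ a sum over labelled splittings $K\sqcup L$. You then match this against the multivariate Leibniz rule $\prod_i\partial_{b_i}(\phi\psi)=\sum_{K\sqcup L}\prod_{k\in K}\partial_{b_k}\phi\prod_{l\in L}\partial_{b_l}\psi$.

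What your route buys is completeness. The paper leaves the inductive step implicit, and linearity alone does not reduce the claim to trees, since $\Upsilon_f[\cdot]\{\cdot\}$ does not turn $\odot$ into composition. Making the induction rigorous needs an extra ingredient. One option is the factorisation $\Upsilon_f[\tau\odot w]\{\phi\}=\sum_b\Upsilon_f^b[\tau]\,\Upsilon_f[w]\{\partial_b\phi\}$. Another is Proposition~\ref{morphism_element}, together with the facts that trees generate $(\CH^*_\BCK,\star_\BCK)$ and that $\Delta_\shuffle$ is multiplicative for $\star_\BCK$. Your closed-form expansion is exactly the unrolled version of that induction.

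The paper's route is shorter and isolates the conceptual point: trees act as derivations, and the identity is the coproduct-level shadow of that fact.

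Your bookkeeping remark is also right. With the symmetry-factor pairing of Definition~\ref{dfn:perfBCK}, the graded dual of $\odot$ is precisely the labelled-subset unshuffle, so no extra combinatorial factors appear.
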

\begin{proof}
	We only have to prove the identity for the primitive elements of $\Delta_{\shuffle}$ that are exactly $ \CT $  since the primitive elements are ``generators" and both $\Delta_{\shuffle}$ and elementary vector fields are linear in $\CF^*$. One can generate the proof from the primitive elements to the symmetric algebra by induction on the cardinal  of $u \in \CF^*$ that is the number of trees in the forest $u$.
	 On the left-hand side of the equality
	\begin{equs}
		\Upsilon_f[ \Delta_\shuffle \tau] \{\phi \otimes \psi\} (y)&= \Upsilon_f[  \tau \otimes \mathbf{1}] \{\phi \otimes \psi\}(y) + \Upsilon_f[  \mathbf{1} \otimes \tau] \{\phi \otimes \psi\}(y)
		\\ & = \psi(y) \sum_{b =1}^m \Upsilon_f^{b}[\tau](y)  \partial_b \phi(y)  + \phi(y) \sum_{b =1}^m \Upsilon_f^{b}[\tau](y)  \partial_b \psi(y)
		\\ & = \sum_{b =1}^m \Upsilon_f^{b}[\tau](y)  \partial_b (\phi(y)\psi(y)) 
		\\ & = \Upsilon_f[\tau] \{\phi \psi\}(y)
	\end{equs}
	since $\Upsilon_f[\mathbf{1}] (\phi) = \phi$ and we have applied the Leibniz rule.
\end{proof}
\begin{remark}
	The proof for Proposition \ref{morphism_element} and Proposition \ref{prop:key_indentity_2} for the case of planar trees are performed in \cite{KL23}. Since those for $\CH_\BCK$ are needed in this paper and to be self-contained, we have decided to provide their proofs. 
\end{remark}

\begin{proposition} \label{flow_morphism}
We recall the morphism of Hopf algebras given below
	$$\Lambda^*:(\CH^*,\star,\Delta_\mu)\to(\CH_\BCK^*,\star_{\BCK},\Delta_\shuffle).$$
	Then
	\begin{align*}
		\bar{\Upsilon}_f[u \star v]\{\phi\} &= \bar{\Upsilon}_f[u]  \circ \bar{\Upsilon}_f[v]\{\phi\},\\
		\bar{\Upsilon}_f[ \Delta_\mu u] \{\phi \otimes \psi\} & = \bar{\Upsilon}_f[u] \{\phi \psi\}.
	\end{align*}
\end{proposition}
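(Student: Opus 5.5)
The plan is to transport Propositions~\ref{morphism_element} and \ref{prop:key_indentity_2} from $\CH^*_{\BCK}$ to $\CH^*$ along $\Lambda^*$, using only the definition $\bar{\Upsilon}_f[h]\{\psi\} := \Upsilon_f[\Lambda^*(h)]\{\psi\}$ from \eqref{identification_1_H} and the fact that $\Lambda^*$ is a Hopf algebra morphism. The morphism property comes from dualising Theorem~\ref{thm:univBCK}. Since $\Lambda$ is multiplicative ($\Lambda\circ\odot = \mu\circ(\Lambda\otimes\Lambda)$) and comultiplicative ($\Delta\circ\Lambda = (\Lambda\otimes\Lambda)\circ\Delta_{\BCK}$), the adjoint $\Lambda^*$ satisfies
\begin{equs}
\Lambda^*(u\star v)=\Lambda^*(u)\star_{\BCK}\Lambda^*(v),\qquad \Delta_\shuffle\Lambda^*(u)=(\Lambda^*\otimes\Lambda^*)\Delta_\mu u .
\end{equs}
I would state these two identities explicitly first. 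Each is checked by pairing against an arbitrary forest $\Cf$ (or $\Cf_1\otimes\Cf_2$) with the pairing of Definition~\ref{dfn:perfBCK}, and using $\langle \Lambda^* x, \Cf\rangle=\langle x,\Lambda \Cf\rangle_\CH$.

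For the first identity, I would write
\begin{equs}
\bar{\Upsilon}_f[u\star v]\{\phi\}=\Upsilon_f[\Lambda^*(u)\star_{\BCK}\Lambda^*(v)]\{\phi\}=\Upsilon_f[\Lambda^*(u)]\circ\Upsilon_f[\Lambda^*(v)]\{\phi\}.
\end{equs}
The second equality is Proposition~\ref{morphism_element}, applied with $\Lambda^*(u),\Lambda^*(v)\in\CH^*_{\BCK}$ after extending it by linearity from forests to general elements. The right-hand side is then $\bar{\Upsilon}_f[u]\circ\bar{\Upsilon}_f[v]\{\phi\}$ by definition. For the second identity I would extend $\bar{\Upsilon}_f$ to tensors by $\bar{\Upsilon}_f[u_1\otimes u_2]\{\phi\otimes\psi\}=\bar{\Upsilon}_f[u_1]\{\phi\}\bar{\Upsilon}_f[u_2]\{\psi\}$, which is exactly $\Upsilon_f[(\Lambda^*\otimes\Lambda^*)(u_1\otimes u_2)]\{\phi\otimes\psi\}$. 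Then
\begin{equs}
\bar{\Upsilon}_f[\Delta_\mu u]\{\phi\otimes\psi\}=\Upsilon_f[\Delta_\shuffle\Lambda^*(u)]\{\phi\otimes\psi\}=\Upsilon_f[\Lambda^*(u)]\{\phi\psi\}=\bar{\Upsilon}_f[u]\{\phi\psi\},
\end{equs}
where the middle step is Proposition~\ref{prop:key_indentity_2}.

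The main obstacle is the bookkeeping around the dual morphism, not any analysis. Two points need care. First, the pairing on $\CH_{\BCK}$ carries symmetry factors, so I must check that adjointness of $\star_{\BCK}$ to $\Delta_{\BCK}$ and of $\Delta_\shuffle$ to $\odot$ holds with this same pairing; this was recalled before Remark~\ref{remark:pairing}. Second, $\Lambda^*$ is only defined via the pairing on $\CH$, and elements of $\CK=\Im(\Lambda)^\perp$ are sent to zero. This is harmless, because both sides of each identity factor through $\Lambda^*$. A further subtlety is that $\CH^*$ is a graded dual, so elements may be infinite sums such as $\mathbf{X}_{s,t}$. The identities hold degree by degree, since all maps are graded, and for evaluation at $y$ I would restrict to finite sums or assume the same convergence as in the B-series.
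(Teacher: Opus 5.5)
Your proposal is correct and follows the paper's proof exactly. Both identities are obtained by pushing through $\Lambda^*$ via $\Lambda^*(u\star v)=\Lambda^*(u)\star_{\BCK}\Lambda^*(v)$ and $(\Lambda^*\otimes\Lambda^*)\Delta_\mu=\Delta_\shuffle\Lambda^*$, and then invoking Propositions~\ref{morphism_element} and \ref{prop:key_indentity_2}. Your explicit derivation of these two morphism identities by dualising $\Lambda$ through the pairings is a welcome addition, since the paper only asserts them. One small correction: the paper recalls only the $\star_{\BCK}$/$\Delta_{\BCK}$ adjointness before Remark~\ref{remark:pairing}, so the $\Delta_\shuffle$/$\odot$ adjointness under the symmetry-factor pairing, though true, is something you would verify rather than cite.
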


\begin{proof} In this proof, we will use the fact that $  \Lambda^*$ is a Hopf algebra morphism between $ (\CH^*, \star, \Delta_{\mu}) $ and $ (\CH_{\BCK}^*, \mu, \Delta_{\shuffle}) $. This implies
	\begin{equs} \label{lambda_star}
		\Lambda^*(u \star v) = \Lambda^*(u) \star_{\BCK} \Lambda^*( v)
		\end{equs}
		\begin{equs} \label{lambda_delta}
			\left( \Lambda^* \otimes \Lambda^* \right) \Delta_{\mu} = \Delta_{\shuffle} \Lambda^*.
		\end{equs}
For the first identity, one has
	\begin{align*}
		\bar{\Upsilon}_f[u \star v]\{\phi\} &= \Upsilon_f[\Lambda^*(u \star v)]\{\phi\}\\
		&= \Upsilon_f[\Lambda^*(u) \star_{\BCK} \Lambda^*(v)]\{\phi\}\\
		&= \Upsilon_f[\Lambda^*(u)]  \circ \Upsilon_f[\Lambda^*(v)]\{\phi\}\\
		&= \bar{\Upsilon}_f[u]  \circ \bar{\Upsilon}_f[v]\{\phi\}.
	\end{align*}
	where from the first to the second line, we have used \eqref{lambda_star}. From the second to the third line, we have applied Proposition \ref{morphism_element}.
	For the second identity, one has
	\begin{equs}
			\bar{\Upsilon}_f[ \Delta_\mu u] \{\phi \otimes \psi\}(y)&  = 
			\Upsilon_f[\left( \Lambda^* \otimes \Lambda^* \right) \Delta_\mu u] \{\phi \otimes \psi\}(y)
			\\ &  = 
			\Upsilon_f[ \Delta_{\shuffle} \Lambda^*( u)] \{\phi \otimes \psi\}(y)
			\\ & = 
			 \Upsilon_f[\Lambda^*( u)] \{\phi \psi\}(y)
			 \\ & = \bar{\Upsilon}_f[u] \{\phi \psi\}(y)
	\end{equs}
	where from the first to the second line, we have used \eqref{lambda_delta}.
	From the second to the third line, we have applied Proposition \ref{prop:key_indentity_2}.
\end{proof}

	\begin{proof}[of Theorem\ref{main_thm_1}]
		Firstly, Theorem \ref{thm:univBCK} states that the cocycle condition \eqref{eq:one_cocycle} in Assumption \ref{assumption_fixed_point} ensures the existence of the morphism $\Lambda^{*} $ and the existence of its dual map $\Lambda$.		
		Recall that the space $\CH$ splits as $\CH=\CI\oplus\CK$, with $\CI=\Im(\Lambda)$, and $\CK=\Im(\Lambda)^\perp$ the orthogonal of $\Im(\Lambda)$.
		We denote by $\ST$ an orthogonal basis of $\CI$, we naturally identify $\CI$ and $\CI^*$ via this basis.
Proposition \ref{symmetry_H} shows that the solution of cocycle-RDE admits a $\ST$-B-series expression whose elementary differentials are defined via \eqref{first_def_elementary}.
		
		Therefore, it is left to check that elementary differentials of $\ST$-B-series satisfy the two key identities in Assumption \ref{assumption_fixed_point}. For $\CH_{\BCK}^*$, Proposition \ref{morphism_element} shows that
		\eqref{key_identities_1} holds while \eqref{key_identities_2} is proved in 
		Proposition \ref{prop:key_indentity_2}. We can pass those two identities to $\CH^*$ by using the fact that $\Lambda$ is a morphism of Hopf algebras (see Proposition \ref{flow_morphism}).
	\end{proof}
	\begin{remark}
		One observes that Assumption \ref{flow_condition} requires the existence of the morphism (for defining elementary differentials) but not necessarily the one-cocycle condition. Roughly speaking, the one-cocycle condition implies the existence of the morphism. 
%		The definition of elementary differentials . 
		That is why log-ODE method is more general than the fixed point argument in this sense.
		However, one has to keep in mind that the one-cocycle condition is sufficient for showing the fixed-point. It is not proved to be necessary.
	\end{remark}

\section{No algebraic fixed point for multi-indices}
\label{Sec::5}
Another combinatorial set coming from singular SPDEs has been used for defining rough paths. They come equipped with a Hopf algebra structure necessary for the Chen's relation.  Let us introduce this Hopf algebra $\CH_{\tiny{\mathbf{M}}} (\mathrm{Span}_{\mathbb{R}}(\CF\! \CM), \cdot, \Delta^{\tiny{\mathbf{M}}})$. We are given abstract variables $(z_k)_{k\in \mathbb{N}}$, a multi-index $z^{\beta}$ with $ \beta : \mathbb{N} \rightarrow \mathbb{N} $ having a finite support given by
\begin{equs}
	z^{\beta} : = \prod_{k \in \mathbb{N}} z_k^{\beta(k)}
\end{equs}
which is a monomial in the variables $z_k$. In practice, one has to work with more variables of the form $z_{(i,k)}$ with $i \in [d]$ but we prefer to look at simple multi-indices for having lighter notations as the proof is exactly the same. We are interested in specific multi-indices called populated multi-indices. They 
 satisfy the \emph{population} condition:
\begin{equation} \label{population_ODE}
	[\beta] :=	\sum_{k \in \mathbb{N}} (1 - k)\beta(k)  = 1.
\end{equation}
One has a surjective map from the rooted trees to the populated multi-indices that we denote by $\Phi$ and given by
\begin{equation*}
	\Phi( \CB_{+}(\tau_1,...,\tau_n) ) = z_{n} \prod_{i=1}^n \Phi(\tau_i).
\end{equation*}
Denote the set of populated multi-indices by $ \mathcal{M} $. Then, we consider the symmetric space over populated multi-indices. This vector space is formed of forests of multi-indices which are collections of multi-indices without any order among them. The forest product $\tilde{\prod}_{i=1}^m z^{\beta_i}$ (or $z^\beta \cdot z^\alpha$) is the juxtaposition of multi-indices. The identity element of the forest product is the empty multi-index $z^{\mathbf{0}}$ which is the multi-indice with $\beta(k) =0$ for every $k \in \mathbb{N}$. We denote this space by $ \mathcal{F} \! \mathcal{M} $.

Norms and symmetry factors are two important quantities of multi-indices. We would use the following notations and formulae for them.
\begin{itemize}
	\item The norm of a multi-index $z^\beta$ is the sum of each element of $\beta$
	\begin{equs}
		|z^\beta| = \sum_{k \in \mathbb{N}} \beta(k).
	\end{equs}
	\item The symmetry factor of a multi-indice counts the total number of permuting $k$ children of each node $z_k$
	\begin{equs} \label{symmetry_factor_1}
		S_{\text{\tiny M}}(z^{\beta}) = \prod_{k \in \mathbb{N}} (k!)^{\beta(k)}.
	\end{equs}
\end{itemize}
The above-mentioned quantities can be extended and applied to a forest of multi-indices. Then the norms are
\begin{equs}
	\left|\tilde{\prod}_{i=1}^m z^{\beta_i}\right| = \sum_{i=1}^m |z^{\beta_i}|.
\end{equs}
The symmetry factor for a forest of multi-indices $\tilde{z}^{\tilde{\beta}} =\tilde{\prod}_{i=1}^m \left(z^{\beta_i}\right)^{r_i}$	with distinct $\beta_i$ is
\begin{equs}  \label{symmetry_factor_2}
	S_{\text{\tiny M}}\left(\tilde{z}^{\tilde{\beta}}\right) = \prod_{i=1}^m r_i!\left(S_{\text{\tiny M}}(z^{\beta_i})\right)^{r_i}.
\end{equs} 
In the sequel, we will also use the following symmetry factor:
\begin{equs}
	S_{\tiny{\text{ext}}}( \tilde{\prod}_{i=1}^n z^{\beta_i}) = 	\frac{	S_{\text{\tiny M}}( \tilde{\prod}_{i=1}^n z^{\beta_i}) }{S_{\text{\tiny M}}( \prod_{i=1}^n z^{\beta_i})} = \prod_{i=1}^m r_i!
\end{equs}
For two forests of multi-indices $\tilde{z}^{\tilde{\alpha}}$ and  $\tilde{z}^{\tilde{\beta}}$, the pairing is defined via the inner product
\begin{equs} \label{inner_product}
	\langle \tilde{z}^{\tilde{\alpha}}, \tilde{z}^{\tilde{\beta}} \rangle := S_{\text{\tiny M}}(\tilde{z}^{\tilde{\alpha}})\delta^{\tilde{\alpha}}_{\tilde{\beta}},
\end{equs}
where $\delta^{\tilde{\alpha}}_{\tilde{\beta}} = 1$, if ${\tilde{\alpha}}={\tilde{\beta}}$, otherwise it is equal to $0$.

Equipped with these notations, we are able to provide an explicit formula for the coproduct denoted by $ \Delta^{\tiny{\mathbf{M}}} $ from \cite[Theorem 3.5]{BH24}:
	\begin{equs} \label{explicit_coproduct_BCK_2}
		\begin{aligned}
	\Delta^{\tiny{\mathbf{M}}} z^{\beta} &= 
	z^{\mathbf{0}} \otimes z^\beta + z^\beta \otimes z^{\mathbf{0}}
	\\&+
	\sum_{\substack{{\beta  = \beta_1 + \cdots + \beta_n + \hat{\beta}} \\ n \in \mathbb{N}^*}} \frac{1}{S_{\tiny{\text{ext}}}( \tilde{\prod}_{i=1}^n z^{\beta_i})}
	\tilde{\prod}_{i=1}^n z^{\beta_i}    \otimes  \bar{D}^{n} z^{\hat{\beta}},
	\end{aligned}		 
\end{equs}
where the $\beta_i$ satisfy the population condition and the image of the map $\bar{D}^{n}$ lives in $\mathrm{Span}_{\mathbb{R}}(\CM)$. The sum $\sum_{\beta  = \beta_1 + \cdots + \beta_n + \hat{\beta}}$ does not count the order among $\beta_i$, which means $\beta  = \beta_1 + \cdots + \beta_n + \hat{\beta}$ is a partition over $\beta$. The map $ \bar{D}$ is given by
	\begin{equs} \label{definition_adjoint}
	\bar{D} z^{\hat{\beta}} = \sum_{k \in \mathbb{N}^*} 	 k\frac{\hat{\beta}(k-1)+1}{ \hat{\beta}(k)} 	 z_{k-1} \partial_{z_k} z^{\hat{\beta}}.
\end{equs}
The formula was first introduced in \cite[Theorem 13]{GMZ26}.
We illustrate the previous definitions recalling an example given in \cite[Example 3.6]{BH24}
We only give the full computation for $z^{\hat{\beta}_1} = z_0z_1z_2$
\begin{equation*}
	\bar D z^{\hat{\beta}_1} = 1\frac{1+1}{1}z_0^2z_2 + 2 \frac{1+1}{1} z_0z_1^2 = 2z_0^2z_2 + 4z_0z_1^2.
\end{equation*}
Other terms are listed in the following tables.
\begin{table}
	\centering
	\begin{tabular}{||c c c c c||} 
		\hline
		Partition $(j)$ & $n$& $\tilde\prod_{i=1}^nz^{\beta_i}$ & $z^{\hat{\beta}_j}$& $\bar{D}^{n} z^{\hat{\beta}} $ \\ [0.5ex] 
		\hline\hline
		1 & 1& $z_0$  & $z_0z_1z_2$& $2z_0^2z_2+4z_0z_1^2$\\[1ex] 
		\hline
		2 & 1& $z_0z_1$  & $z_0z_2$& $2z_0z_1$\\ [1ex]
		\hline
		3 & 1& $z_0^2z_2$ & $z_1$& $z_0$\\ [1ex]
		\hline
		4 & 2& $z_0 \cdot z_0$   & $z_1z_2$& $6 z_0z_1$\\ [1ex]
		\hline
		5 & 2& $z_0\cdot z_0z_1$  & $z_2$& $2z_0$\\ [1ex] 
		\hline
	\end{tabular}
\end{table}Finally we have
\begin{equs}
	\Delta(z^\beta) &= z^{\mathbf{0}} \otimes  z_0^2z_1z_2 +  z_0^2z_1z_2 \otimes z^{\mathbf{0}} + 2z_0 \otimes z_0^2z_2 + 4z_0 \otimes z_0z_1^2
	\\&+ 2 z_0z_1 \otimes z_0z_1 + z_0^2z_2 \otimes z_0 
	+ 3z_0 \odot z_0 \otimes z_0z_1 + 2 z_0\odot z_0z_1 \otimes z_0.
\end{equs}

	Let $\bar{\Delta}^{\tiny{\mathbf{M}}}$ the reduced coproduct, for $x \in \mathcal{F}\!\mathcal{M} $, we have \begin{equation*}
		\bar{\Delta}^{\mathbf{M}}(x)=\Delta^{\mathbf{M}}(x)-z^{\mathbf{0}}\otimes x - x\otimes z^{\mathbf{0}}.
	\end{equation*}
Below, we present examples of computations with this reduced coproduct which are needed for the main theorem of the section.
We use the notation $  \bar{\Delta}^{\mathbf{M}}_{m,n} $ which means that we consider only the terms of the form
$  \tilde{\prod}_{i=1}^m \tilde{z}^{\tilde{\beta}_i} \otimes \tilde{\prod}_{j=1}^n \tilde{z}^{\tilde{\alpha}_j} $ with the $ \tilde{\beta}_i $ and $\tilde{\alpha}_j$ being non-zero, when one computes $ \bar{\Delta}^{\mathbf{M}}_{m,n} x $ with $ x \in \mathcal{F} \! \mathcal{M} $. 

 One also has the identity
\begin{equation*}
	\bar{\Delta}^{\mathbf{M}} = \sum_{m,n \in \mathbb{N}} \bar{\Delta}^{\mathbf{M}}_{m,n}.
\end{equation*}
In the computations below, we will use a short hand notation for forest of multi-indices. All multi-indices are written with the elements in decreasing order, for example: $z_1z_0$ and never $z_0z_1$.
This notation allows us to omit the $\cdot$.
Indeed, with this convention, $z_0z_1z_0$ can only be parsed as $z_0\cdot z_1z_0$.
Thus $z_0z_0z_0z_0$ is indeed $z_0\cdot z_0\cdot z_0\cdot z_0$.
Below, we write the computations for forests of multi-indices $\tilde{z}^{\tilde{\beta}}$ with $ | \tilde{z}^{\tilde{\beta}}  | \in \{ 1,2,3\} $ and we give the expression of each non-zero $  \bar{\Delta}^{\mathbf{M}}_{m,n} $ component.

\[\begin{array}{c|c}
	& \bar{\Delta}^{\mathbf{M}}_{1,1}\\
	\hline
	z_0z_0 & 2z_0\otimes z_0\\
	z_1z_0 & z_0\otimes z_0
\end{array}
\quad
\begin{array}{c|c|c}
	& \bar{\Delta}^{\mathbf{M}}_{1,2} & \bar{\Delta}^{\mathbf{M}}_{2,1}\\
	\hline
	z_0z_0z_0 & 3z_0\otimes z_0z_0 & 3z_0z_0\otimes z_0\\
	z_0z_1z_0 & z_0\otimes z_0z_0 + z_0\otimes z_1z_0 & z_0z_0\otimes z_0 + z_1z_0\otimes z_0\\
	z_1z_1z_0 & 2z_0\otimes z_1z_0 & z_1z_0\otimes z_0 \\
	z_2z_0z_0 & 2z_0\otimes z_1z_0 & z_0z_0\otimes z_0
\end{array}\]

\[\begin{array}{c|c|c|c}
	& \bar{\Delta}^{\mathbf{M}}_{1,3} & \bar{\Delta}^{\mathbf{M}}_{2,2}\\
	\hline
	z_0z_0z_0z_0 & 4z_0\otimes z_0z_0z_0 & 6z_0z_0\otimes z_0z_0 \\
	z_0z_0z_1z_0 & z_0\otimes z_0z_0z_0 + 2z_0\otimes z_0z_1z_0 & 2z_0z_0\otimes z_0z_0 + z_1z_0\otimes z_0z_0 \\
	z_0z_1z_1z_0 & 2z_0\otimes z_0z_1z_0 + z_0\otimes z_1z_1z_0 & 2z_0z_0\otimes z_1z_0 + 2z_1z_0\otimes z_0z_0 \\
	z_0z_2z_0z_0 & 2z_0\otimes z_0z_1z_0 + z_0\otimes z_2z_0z_0 & 2z_0z_0\otimes z_1z_0 + z_0z_0\otimes z_0z_0 \\
	z_1z_0z_1z_0 & 2z_0\otimes z_0z_1z_0 & z_0z_0\otimes z_0z_0 + 2z_1z_0\otimes z_1z_0\\
	z_1z_1z_1z_0 & 3z_0\otimes z_1z_1z_0 & 6z_1z_0\otimes z_1z_0 \\
	z_2z_1z_0z_0 & 4z_0\otimes z_1z_1z_0 + 2z_0\otimes z_2z_0z_0 & 3z_0z_0\otimes z_1z_0 + 2z_1z_0\otimes z_1z_0 \\
	z_3z_0z_0z_0 & 3z_0\otimes z_2z_0z_0 & 3 z_0z_0\otimes z_1z_0 
\end{array}\]

\[\begin{array}{c|c|c|c}
	&  \bar{\Delta}^{\mathbf{M}}_{3,1}\\
	\hline
	z_0z_0z_0z_0 & 4z_0z_0z_0\otimes z_0\\
	z_0z_0z_1z_0 &  z_0z_0z_0\otimes z_0 + 2z_0z_1z_0\otimes z_0\\
	z_0z_1z_1z_0 &  2z_0z_1z_0\otimes z_0 + z_1z_1z_0\otimes z_0\\
	z_0z_2z_0z_0 &  z_0z_0z_0\otimes z_0 + z_2z_0z_0\otimes z_0\\
	z_1z_0z_1z_0 &  2z_0z_1z_0\otimes z_0\\
	z_1z_1z_1z_0 & 3z_1z_1z_0\otimes z_0\\
	z_2z_1z_0z_0 &  2z_0z_1z_0\otimes z_0 + z_2z_0z_0\otimes z_0\\
	z_3z_0z_0z_0 &  z_0z_0z_0\otimes z_0
\end{array}\]

\begin{theorem} \label{thm_cocycle_multi}
	Let $L$ a cocycle of degree $1$ of the Hopf algebra of multi-indices, then $L(z_0)=0$. 
	Moreover the obstruction appear in degree $4$, which is the minimal degree for such an obstruction to appear.
	\end{theorem}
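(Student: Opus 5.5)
The strategy is to treat $L$ as an unknown linear map of degree one and solve the cocycle identity \eqref{eq:one_cocycle} degree by degree, using the tables of $\bar{\Delta}^{\mathbf{M}}_{m,n}$ above. Since $\CH_{\tiny{\mathbf{M}}}$ is graded by the norm $|\cdot|$ and $L$ raises the degree by one, we write for each forest $x$ of degree $k\le 3$ the general ansatz $L(x)=\sum_{|y|=k+1} c_{x,y}\,y$, where $y$ ranges over forests of multi-indices of degree $k+1$. The unknowns are the finitely many coefficients $c_{x,y}$.

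The cocycle identity for the reduced coproduct reads
\[
\bar{\Delta}^{\mathbf{M}} L(x) = (\id\otimes L)\bar{\Delta}^{\mathbf{M}}(x) + x\otimes L(z^{\mathbf{0}}),
\]
after removing the trivial terms $z^{\mathbf{0}}\otimes L(x)$ and $L(x)\otimes z^{\mathbf{0}}$, which appear on both sides. In particular $L(z^{\mathbf{0}})$ lies in degree one, so $L(z^{\mathbf{0}})=c\,z_0$, and the claim to prove is $c=0$.

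\begin{itemize}
\item \textbf{Degree 0.} Taking $x=z^{\mathbf{0}}$ gives $\bar{\Delta}^{\mathbf{M}}(c z_0)=0$, which holds automatically.
\item \textbf{Degree 1.} Take $x=z_0$ and $L(z_0)=a\,z_0z_0+b\,z_1z_0$ (forest versus multi-index, in the shorthand above). Then $\bar{\Delta}^{\mathbf{M}}L(z_0)=(2a+b)z_0\otimes z_0$, while the right-hand side is $c\,z_0\otimes z_0$. This gives the constraint $2a+b=c$.
\item \textbf{Degree 2.} Impose the identity on $x=z_0z_0$ and $x=z_1z_0$, writing $L(x)$ as a combination of the four degree-three basis elements $z_0z_0z_0,\ z_0z_1z_0,\ z_1z_1z_0,\ z_2z_0z_0$. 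Project onto the components $\bar{\Delta}_{1,2}$ and $\bar{\Delta}_{2,1}$ using the third table. The right-hand side involves $(\id\otimes L)(2z_0\otimes z_0)$ or $(\id\otimes L)(z_0\otimes z_0)$, plus $c\,x\otimes z_0$.
\item \textbf{Degree 3.} Do the same for the four degree-three forests, with degree-four ansätze on the eight basis elements of the last tables. Use both $\bar{\Delta}_{1,3}$, $\bar{\Delta}_{2,2}$ and $\bar{\Delta}_{3,1}$.
\end{itemize}

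Each step produces a linear system in the $c_{x,y}$, $a$, $b$, $c$. The aim is to show that the combined system forces $c=0$. The claim "obstruction in degree $4$" means that the equations obtained up to degree $3$ (image in degree $\le 3$) are consistent with $c\neq 0$, but the equations with images in degree $4$ (i.e. $L$ applied to degree-three elements, checked against the degree-four coproduct tables) force $c=0$.

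For minimality, one exhibits a partial solution with $c=1$ through degree $3$. A natural candidate is obtained by pulling back $\CB_+$ along $\Phi$: $L(z_0)$ proportional to $z_1z_0$, i.e. $b=1,a=0$. One then checks that the degree-two equations admit a solution, for instance via $\Phi(\CB_+(\tau))$ with suitable symmetry-factor weights.

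The main obstacle is the degree-four computation. The $\bar{\Delta}_{2,2}$ components are the ones that couple the unknowns of different $L(x)$. The failure of $\Phi$ to intertwine $\CB_+$ with a well-defined map on multi-indices should show up there: distinct trees with the same multi-index, such as the two trees mapped to $z_2z_1z_0z_0$ versus $z_1z_1z_1z_0$, impose incompatible requirements. I would first solve the $\bar{\Delta}_{1,3}$ and $\bar{\Delta}_{3,1}$ equations to express the degree-three unknowns in terms of $a,b,c$. Substituting into the $\bar{\Delta}_{2,2}$ components then gives an overdetermined system whose only solution, via Gaussian elimination, has $c=0$, and hence, by back-substitution, $L(z_0)$ satisfies the constraint with $c=0$ as well.
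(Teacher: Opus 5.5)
\textbf{Same strategy as the paper, but the key step fails.} Your plan is the paper's: solve the cocycle identity degree by degree and find the contradiction when $L$ is evaluated on degree-three forests. The decisive step, that Gaussian elimination of the degree-four system forces $c=0$, is only asserted. For the Hopf algebra defined by \eqref{explicit_coproduct_BCK_2} it is false.

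That formula gives $\bar{\Delta}^{\mathbf{M}}(z_1z_1z_0)=z_0\otimes z_1z_0+z_1z_0\otimes z_0$ and $\bar{\Delta}^{\mathbf{M}}(z_1z_1z_1z_0)=z_0\otimes z_1z_1z_0+z_1z_0\otimes z_1z_0+z_1z_1z_0\otimes z_0$. The tables preceding the theorem, on which the paper's elimination relies, disagree with this. For $z_1z_1z_0$ they give $2z_0\otimes z_1z_0$ against $z_1z_0\otimes z_0$, which violates coassociativity: comparing $(\bar\Delta\otimes\mathrm{id})\bar\Delta$ with $(\mathrm{id}\otimes\bar\Delta)\bar\Delta$ forces these two coefficients to be equal. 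They also omit $z_0z_0\otimes z_1z_0$ from $\bar{\Delta}^{\mathbf{M}}_{2,2}(z_0z_0z_1z_0)$. With the correct coproduct, the equation for $L(x)$ is solvable for every degree-three forest $x$ and every $c$, as soon as $2a+b=c$. You can check $x=z_1z_1z_0$ and $x=z_2z_0z_0$ by hand.

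\textbf{Why there is no degree-four obstruction.} Dualise. The cocycle identity says $L^*(f\star g)=f\star L^*(g)$ whenever $\varepsilon(g)=0$. So $L^*$ is determined by its restriction $\phi$ to the Lie algebra $\mathfrak{g}$ spanned by populated multi-indices (bracket $=$ commutator of $\star$, and $\mathcal{H}^*_{\mathbf{M}}\cong U(\mathfrak{g})$). By exactness of the Chevalley--Eilenberg resolution, the only constraints are $\phi([x,y])=x\star\phi(y)-y\star\phi(x)$. Obstructions can therefore only come from linear relations among brackets. In degree four, $[z_0,z_1z_1z_0]=2z_2z_1z_0z_0$ and $[z_0,z_2z_0z_0]=z_3z_0z_0z_0+z_2z_1z_0z_0$ are linearly independent, so there is no obstruction there. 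In particular, the collapse of two trees onto $z_2z_1z_0z_0$ obstructs nothing, and $z_1z_1z_1z_0$ is the image of a single tree.

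\textbf{Where the obstruction actually is.} The first relation occurs in degree five: $[z_0,z_1^3z_0]=3[z_1z_0,z_1^2z_0]=3z_2z_1^2z_0^2$, an identity that fails for trees. Write $\phi(z_0)=c$, $\phi(z_1z_0)=p\,z_0$ (one checks $p=b$) and $\phi(z_1^2z_0)=q_1\,z_0\star z_0+q_2\,z_1z_0$. The relation then gives
$z_0\star\phi(z_1^3z_0)-c\,z_1^3z_0-3\,z_1z_0\star\phi(z_1^2z_0)+3p\,z_1^2z_0\star z_0=0$.

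Reduce modulo $z_0\star U(\mathfrak{g})$. In a PBW basis with $z_0$ smallest, the degree-four quotient has basis $z_1z_0\star z_1z_0$, $z_1^3z_0$, $z_2z_1z_0^2$, $z_3z_0^3$. Moreover $z_1^2z_0\star z_0\equiv-2z_2z_1z_0^2$ and $z_1z_0\star z_0\star z_0\equiv z_3z_0^3+z_2z_1z_0^2$. The relation becomes
$-c\,z_1^3z_0-3q_1z_3z_0^3-(3q_1+6p)z_2z_1z_0^2-3q_2\,z_1z_0\star z_1z_0\equiv0$.
This forces $c=p=q_1=q_2=0$. Hence $L(z^{\mathbf{0}})=0$, and even $L(z_0)=0$, since $b=p$ and $2a+b=c$.

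\textbf{What to change.} Push your elimination one degree further, to $L$ on degree-four forests with images in degree five, or run it in the dual form above. The minimality claim must also be corrected: partial cocycles with $c\neq0$ exist through images of degree four.
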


\begin{proof}
	The idea of the proof is to use the defining property of a cocycle to compute the value of $L$ for $z^{\mathbf{0}}, z_0, z_1z_0, z_1z_1z_0, z_2z_0z_0$, and to notice that it forces $L(z^{\mathbf{0}})=0$.
	We need to go up to degree $4$ because the multi-index of lowest degree representing several trees is $z_2z_1z_0z_0$, thus one cannot hope to find an obstruction in degree less than $4$.
	The proof now reduces to a tedious exercise of linear algebra involving a matrix $8\times 12$.
	
	We recall that a cocycle $L$ is a map such that
	$$\Delta^{\mathbf{M}}(L(x))=x_{(1)}\otimes L(x_{(2)}) + L(x)\otimes z^{\mathbf{0}}$$
	where we use the Sweedler notation $\Delta^{\mathbf{M}}(x)=x_{(1)}\otimes x_{(2)}$.

	Since $z_0$ is the only multi-index with one vertex, we have $\lambda\in\mathbb{R}$ such that:
	$$L(z^{\mathbf{0}})=6\lambda z_0.$$
	We have $\bar{\Delta}^{\mathbf{M}}(z_0)=0$, and $\bar{\Delta}^{\mathbf{M}}(L(z_0))=z_0\otimes L(z^{\mathbf{0}})$.
	Thus we have $\mu\in\mathbb{R}$ such that 
	$$L(z_0)=6\lambda z_1z_0 + 6\mu(z_0z_0 - 2z_1z_0).$$
	We have $\bar{\Delta}^{\mathbf{M}}(z_1z_0)=z_0\otimes z_0$, and $\bar{\Delta}^{\mathbf{M}}(L(z_1z_0))=z_0\otimes L(z_0) + z_1z_0\otimes L(z^{\mathbf{0}})$.
	Thus we have $\gamma\in \mathbb{R}$ such that
	$$L(z_1z_0)=3\lambda z_1z_1z_0 + 2\mu(z_0z_0z_0 - 3z_2z_0z_0) + \gamma(2z_0z_0z_0 - 6z_0z_1z_0 + 3z_1z_1z_0).$$
	Before beginning the computations in degree $4$, let us compute $\ker(\bar{\Delta}^{\mathbf{M}}_{2,2})\cap\ker(\bar{\Delta}^{\mathbf{M}}_{3,1})$.
	One may check that $\ker(\bar{\Delta}^{\mathbf{M}}_{3,1})=\mathrm{Span}(v_1,v_2,v_3,v_4)$ with
	\begin{align*}
		v_1 & = z_0z_0z_0z_0 - 4z_3z_0z_0z_0\\
		v_2 & = z_0z_0z_1z_0 - z_1z_0z_1z_0 - z_3z_0z_0z_0\\
		v_3 & = 3z_0z_1z_1z_0 - 3z_1z_0z_1z_0 - z_1z_1z_1z_0\\
		v_4 & = z_0z_2z_0z_0 + z_1z_0z_1z_0 - z_2z_1z_0z_0 - z_3z_0z_0z_0.
	\end{align*}
	We may check that $\ker(\bar{\Delta}^{\mathbf{M}}_{2,2})\cap\ker(\bar{\Delta}^{\mathbf{M}}_{3,1})=\mathrm{Span}(w_1,w_2)$ with
	\begin{align*}
		w_1 & = z_0z_0z_0z_0 - 3z_0z_2z_0z_0 - 3z_1z_0z_1z_0 + 3z_2z_1z_0z_0 - z_3z_0z_0z_0\\
		w_2 & = 3z_0z_0z_0z_0 - 12z_0z_0z_1z_0 + 6z_0z_1z_1z_0 + 6z_1z_0z_1z_0 - 2z_1z_1z_1z_0\\
		\bar{\Delta}^{\mathbf{M}}(w_1) & = 4z_0\otimes z_0z_0z_0 - 12z_0\otimes z_0z_1z_0 + 12z_0\otimes z_1z_1z_0 \\
		\bar{\Delta}^{\mathbf{M}}(w_2) & = 0.
	\end{align*}
	Let us compute $L(z_1z_1z_0)$.
	We set
	\begin{align*}
		\nu_\mu & = -z_0z_0z_0z_0 + 6z_0z_0z_1z_0 - 6z_1z_0z_1z_0 - 2z_3z_0z_0z_0\\
		\bar{\Delta}^{\mathbf{M}}(\nu_\mu) & = 2z_0\otimes z_0z_0z_0 - 6z_0\otimes z_2z_0z_0 + 6z_1z_0\otimes z_0z_0 - 12z_1z_0\otimes z_1z_0.
	\end{align*}
	Since
	$$\bar{\Delta}^{\mathbf{M}}(L(z_1z_1z_0))= 2z_0\otimes L(z_1z_0) + 2z_1z_0\otimes L(z_0) + z_1z_1z_0\otimes L(z^{\mathbf{0}})$$
	we have
	\begin{multline*}
		\bar{\Delta}^{\mathbf{M}}(L(z_1z_1z_0) - \lambda z_1z_1z_1z_0 - 2\mu\nu_\mu) \\
		= \gamma(2z_0\otimes z_0z_0z_0 - 6z_0\otimes z_0z_1z_0 + 3z_0\otimes z_1z_1z_0). 
	\end{multline*}
	Thus, $L(z_1z_1z_0) - \lambda z_1z_1z_1z_0 - 2\mu\nu_\mu\in\ker(\bar{\Delta}^{\mathbf{M}}_{2,2})\cap\ker(\bar{\Delta}^{\mathbf{M}}_{3,1})$, and $\gamma=0$. 
	Let us compute $L(z_2z_0z_0)$.
	We set
	\begin{align*}
		\nu_\lambda & = -z_0z_0z_0z_0 + 6z_0z_2z_0z_0 - 2z_3z_0z_0z_0\\
		\bar{\Delta}^{\mathbf{M}}(\nu_\lambda) & = -4z_0\otimes z_0z_0z_0 + 12z_0\otimes z_0z_1z_0 + 6z_0z_0\otimes z_1z_0 + 6z_2z_0z_0\otimes z_0.
	\end{align*}
	Since
	$$\bar{\Delta}^{\mathbf{M}}(L(z_2z_0z_0))= 2z_0\otimes L(z_1z_0) + z_0z_0\otimes L(z_0) + z_2z_0z_0\otimes L(1)$$
	we have
	\begin{multline*}
		\bar{\Delta}^{\mathbf{M}}(L(z_1z_1z_0) - \lambda\nu_\lambda - \mu(z_0z_0z_0z_0 - 4z_3z_0z_0z_0)) \\
		= \lambda(-4z_0\otimes z_0z_0z_0 + 12z_0\otimes z_0z_1z_0 - 6z_0\otimes z_2z_0z_0). 
	\end{multline*}
	Thus $L(z_1z_1z_0) - \lambda\nu_\lambda - \mu(z_0z_0z_0z_0 - 4z_3z_0z_0z_0)\in\ker(\bar{\Delta}^{\mathbf{M}}_{2,2})\cap\ker(\bar{\Delta}^{\mathbf{M}}_{3,1})$, and $\lambda=0$ which implies $ L(z^{\mathbf{0}}) = 0 $.
\end{proof}

\begin{proposition} \label{prop_flow_multi} The muti-indices Hopf algebra satisfies Assumption \ref{flow_condition}. 
	\end{proposition}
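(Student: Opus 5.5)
We use the second alternative of Assumption~\ref{flow_condition}, since Theorem~\ref{thm_cocycle_multi} rules out obtaining $\Lambda^*$ from a cocycle. The candidate morphism is the map $\Phi$ sending trees to populated multi-indices, extended multiplicatively to forests. We view it dually as a map $\Phi:\CH^*_{\BCK}\to\CH^*_{\mathbf{M}}$, suitably normalised by the symmetry factors $S$ and $S_{\text{\tiny M}}$ so that the pairings match.

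The first step is to prove that this $\Phi$ is a Hopf algebra morphism $(\CH^*_{\BCK},\star_{\BCK},\Delta_\shuffle)\to(\CH^*_{\mathbf{M}},\star_{\mathbf{M}},\Delta_{\cdot})$. Equivalently, we show that its adjoint $\Phi^*:\CH_{\mathbf{M}}\to\CH_{\BCK}$ intertwines the forest products and the coproducts, that is, $(\Phi^*\otimes\Phi^*)\Delta^{\mathbf{M}}=\Delta_{\BCK}\Phi^*$. Here $\Phi^*(z^\beta)$ is the weighted sum of all trees $\tau$ with $\Phi(\tau)=z^\beta$, with weights $S(\tau)^{-1}S_{\text{\tiny M}}(z^\beta)$. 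Compatibility with products is immediate, since $\Phi$ is defined forest-wise. For the coproduct we plan to invoke the construction of $\Delta^{\mathbf{M}}$ in \cite{BH24,GMZ26}, where the explicit formula \eqref{explicit_coproduct_BCK_2} is derived precisely as the image of $\Delta_{\BCK}$. The operator $\bar D^n$ counts the ways of grafting $n$ cut subtrees back onto the trunk, which is the multi-index shadow of admissible cuts. We expect this step to be the main obstacle. The combinatorial factors $S_{\text{ext}}$ and $k(\hat\beta(k-1)+1)/\hat\beta(k)$ must be matched carefully against tree symmetry factors. If a direct proof is too heavy, we will instead cite the morphism statement from \cite{BH24}.

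The second step is to define the elementary differentials on multi-indices, $\bar\Upsilon_f[z^\beta]=\Upsilon_f[\tau]$ for any tree $\tau$ with $\Phi(\tau)=z^\beta$. This is done in the scalar case, or in coloured variables $z_{(i,k)}$ in general, and we extend to forests and to $\{\phi\}$ as in \eqref{identification_2}. Well-definedness follows because $\Upsilon_f[\tau]$, for scalar $f$, equals $\prod_v f^{(k_v)}$ with $k_v$ the number of children of $v$. This product depends only on the multiset of node arities, hence only on $\Phi(\tau)$. Consequently $\bar\Upsilon_f[z^\beta]=\prod_k (f^{(k)})^{\beta(k)}$, and \eqref{second_def_elementary} holds by construction.

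Finally we verify \eqref{key_identities_1}--\eqref{key_identities_2}, transporting them along $\Phi$ exactly as in Proposition~\ref{flow_morphism}. We write $u=\Phi(\tilde u)$ and $v=\Phi(\tilde v)$, which is possible since $\Phi$ is surjective. Then $\bar\Upsilon_f[u\star v]=\Upsilon_f[\tilde u\star_{\BCK}\tilde v]$, and Proposition~\ref{morphism_element} gives the composition rule. Similarly, $(\Phi\otimes\Phi)\Delta_\shuffle=\Delta_\cdot\Phi$ together with Proposition~\ref{prop:key_indentity_2} gives the Leibniz rule. The only subtlety is that the result must not depend on the choice of preimage. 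This follows from $\bar\Upsilon_f\circ\Phi=\Upsilon_f$ on forests, which holds by linearity.
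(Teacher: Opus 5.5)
Your proof is correct, but it takes a genuinely different route from the paper's in the final step.

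\textbf{Where the two routes agree.} Your first two steps match the paper in substance. The paper also uses the second alternative of Assumption~\ref{flow_condition}, with $\Phi:\CH^*_{\BCK}\to\CH^*_{\mathbf{M}}$ given by $\tau\mapsto z^{\beta(\tau)}$. It cites \cite[Section 6.5]{LOT} both for the morphism property with respect to $\star_{\BCK}$ and $\star_{\mathbf{M}}$, and for $\bar\Upsilon_f[\Phi(\tau)]=\Upsilon_f[\tau]$ with $\bar\Upsilon_f[z^\beta]=\prod_k (f^{(k)})^{\beta(k)}$.

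Your orientation of the morphism is the right one. It is the adjoint $\Phi^*$, with weights $S_{\text{\tiny M}}(z^\beta)/S(\tau)$, that intertwines $\Delta^{\mathbf{M}}$ and $\Delta_{\BCK}$. For instance $\Phi^*(z_2z_1z_0z_0)=2\,\CB_+(\CB_+(\bullet)\odot\bullet)+\CB_+(\CB_+(\bullet\odot\bullet))$, and one checks $(\Phi^*\otimes\Phi^*)\Delta^{\mathbf{M}}=\Delta_{\BCK}\Phi^*$ on it. By contrast, $\tau\mapsto z^{\beta(\tau)}$ viewed as a map $\CH_{\BCK}\to\CH_{\mathbf{M}}$ is not a coalgebra morphism: the two preimages of $z_2z_1z_0z_0$ have different coproduct images.

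\textbf{Where they differ.} The paper does not derive \eqref{key_identities_1}--\eqref{key_identities_2} from the tree case. It cites \cite[Propositions 2.7--2.8]{BBH26}, where they are verified directly on multi-indices using \eqref{explicit_elementary}. You instead push Propositions~\ref{morphism_element} and~\ref{prop:key_indentity_2} forward along $\Phi$, running Proposition~\ref{flow_morphism} in the opposite direction. That costs you three ingredients, all of which you supply:
\begin{itemize}
\item surjectivity of $\Phi$ on forests;
\item the compatibility $(\Phi\otimes\Phi)\Delta_{\shuffle}=\Delta_{\cdot}\Phi$, which is immediate because both coproducts are unshuffles and $\Phi$ is multiplicative, sending trees to primitive elements;
\item the identity $\bar\Upsilon_f\circ\Phi=\Upsilon_f$ on forests, including the $\{\phi\}$-extension.
\end{itemize}

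\textbf{What each route buys.} Yours is self-contained relative to the paper's own BCK results. It also exhibits a general principle: whenever a surjective Hopf morphism $\CH^*_{\BCK}\to\CH^*$ exists along which $\Upsilon_f$ factors, both key identities are automatic. The only multi-index-specific inputs are then the morphism property and the well-definedness of $\bar\Upsilon_f$. The paper's citation avoids the transport argument but relies on an independent, intrinsic computation on multi-indices.

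\textbf{One precision.} Well-definedness genuinely needs $m=1$. The coloured variables $z_{(i,k)}$ handle $d>1$, not vector-valued $Y$: for $f$ taking values in $\mathbb{R}^m$ with $m>1$, the two trees above give $f''(f'f,f)\neq f'f''(f,f)$.
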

	\begin{proof} The map $  \Phi$ is a morphism from $ (\CH_{\BCK}^*, \star) $ into $ (\CH_{\mathbf{M}}^*, \star_{\mathbf{M}}) $ where $ \star_{\mathbf{M}} $ is the graded dual of $\Delta_{\mathbf{M}}$. This follows from the morphism property given in \cite[Section 6.5]{LOT}.
		Then, the elementary differential $ \bar{\Upsilon}_f $ is given by
		\begin{equs} \label{explicit_elementary}
			\bar{\Upsilon}_f[z^{\beta}](y)  = \prod_{k \in \mathbb{N} } (f^{(k)}(y))^{\beta(k)}.
		\end{equs}
		where the previous definition is used for defining multi-indices B-series in \cite{BHE24}.
		From \cite[Section 6.5]{LOT}, one has for $ \tau \in \CT $
		\begin{equs} \label{morphism_elementary}
			\bar{\Upsilon}_f[\Phi(\tau)] = \Upsilon_f[\tau].
		\end{equs}
		which corresponds to \eqref{second_def_elementary}.
		The fact that this elementary differential satisfies \eqref{key_identities_1} and \eqref{key_identities_2} has been proved in \cite[Proposition 2.7]{BBH26} and \cite[Proposition 2.8]{BBH26} with the use of \eqref{explicit_elementary}.
		\end{proof}

\end{document}